\documentclass[a4paper, reqno]{amsart}
\usepackage{graphicx} % Required for inserting images

\usepackage[margin=3cm]{geometry}
\usepackage{latexsym}
\usepackage{tcolorbox}
\usepackage{soul}
\usepackage{etoolbox}
\usepackage[normalem]{ulem}

\usepackage[utf8]{inputenc}
\usepackage[T1]{fontenc}

\usepackage{amssymb, amsmath, amsthm, amsrefs, xcolor}
\usepackage{mathtools}
\usepackage{hyperref}
\usepackage{mathrsfs}
\usepackage[matha, mathx]{mathabx}
\usepackage[shortlabels]{enumitem}

\newtheorem{thm}{Theorem}[section]
\newtheorem{corollary}[thm]{Corollary}
\newtheorem{lemma}[thm]{Lemma}
\newtheorem{proposition}[thm]{Proposition}

\theoremstyle{definition}

\newtheorem{example}[thm]{Example}
\newtheorem{remark}[thm]{Remark}

\newtheorem*{Que}{Question}

\newcommand{\R}{{\mathbb R}}
\newcommand{\N}{{\mathbb N}}
\newcommand{\C}{{\mathbb C}}

\newcommand{\dd}{{\mathrm d}}

\newcommand{\mG}{\mathcal{G}}

\newcommand{\mB}{\mathcal{B}}
\newcommand{\mS}{\mathcal{S}}
\newcommand{\mA}{\mathcal{A}}

\newcommand{\Fm}{\mathcal{A}}
\newcommand{\mF}{\mathcal{F}}
\newcommand{\mK}{\mathcal{K}}
\newcommand{\mQ}{\mathcal{Q}}
\newcommand{\mM}{\mathcal{M}}
\newcommand{\lm}{\lambda}
\newcommand{\LCrad}{\mathfrak{L}_d^{\textrm rad}}
\newcommand{\LC}{\mathfrak L_d}

\newcommand{\Prad}{\mathfrak P_d}

\DeclareMathOperator{\supp}{supp}

\begin{document}

\numberwithin{equation}{section}
\title[High-dimensional log-concave densities]{High-dimensional limits and extremizers for maximal functions associated with log-concave densities}

\author{Valentina Ciccone}
\address[Valentina Ciccone]{Institute of Mathematics of the Polish Academy of Sciences, Śniadeckich 8, 00-656 Warszawa, Poland}
\email{vciccone@impan.pl}

\author{B{\l}a{\.z}ej Wr{\'o}bel}
\address[B{\l}a{\.z}ej Wr{\'o}bel]{
Institute of Mathematics
	of the Polish Academy of Sciences\\
	Śniadeckich 8\\
	00-656 Warszawa\\
	Poland \& Institute of Mathematics\\
	University of Wroc{\l}aw\\
	Plac Grun\-waldzki 2\\
	50-384 Wroc{\l}aw\\
	Poland}
\email{blazej.wrobel@math.uni.wroc.pl}

\subjclass[2020]{42B25, 52A40}
\keywords{Gaussian maximal function, Hardy-Littlewood maximal function, spherical maximal function, high-dimensional maximal functions, log-concave densities, high-dimensional convex sets, variance bounds, thin-shell bounds}
\thanks{Both authors were supported by the National Science Centre, Poland, grant Sonata Bis 2022/46/E/ST1/00036.}

\begin{abstract}

We introduce a unified framework to establish the high-dimensional asymptotic behavior of maximal functions associated with radial log-concave probability densities, encompassing the maximal heat semigroup, Hardy-Littlewood maximal function over Euclidean balls, and, additionally, maximal spherical means. Namely, for any $p \in (1, \infty)$, we prove that the $L^p(\mathbb{R}^d)$ operator norms of these maximal operators all converge as the dimension $d \to \infty$ to a single, universal limit $\lambda(p)$. 

Furthermore, by proving that the $L^p$ operator norms for the heat semigroup $\mathcal G_*^d$ are monotonically non-decreasing in the dimension, we provide explicit quantitative bounds on the universal limit, showing that $\frac{2}{5}\frac{p}{p-1} \le \|\mathcal{G}_*^1\|_{L^p(\mathbb{R}) \to L^p(\mathbb{R})} \le \lambda(p) \le \frac{p}{p-1}$. 

We also prove an extremality property: among all symmetric convex bodies in high dimensions, the maximal operator associated with the Euclidean ball achieves the asymptotically minimal $L^p$ operator norm. 

Our main results are established via a general transference principle that allows us to control maximal functions via Fourier multiplier symbols. To estimate these symbols uniformly across log-concave densities, we import variance type bounds and thin-shell type concentration of measure results, which are novel tools in the study of maximal functions. In particular, to prove the extremality property, we require a variance type bound for general log concave measures established in a recent series of breakthroughs in high dimensional convex geometry.

\end{abstract}

\maketitle

\section{Introduction}

Let $K$ be a probability density on $\mathbb{R}^d.$ We say that $K$ is log-concave if its support, $\supp(K)=\lbrace x\in\mathbb{R}^d:\, K(x)>0 \rbrace$, is a convex set, and $\log K$ is a concave function over 
$\supp(K)$. $K$ is called symmetric if $K(x)=K(-x)$ for every $x\in\mathbb{R}^d$. For $t>0$, and for every $x\in\mathbb{R}^d$, we define the Hardy--Littlewood averaging operator associated with a symmetric, log-concave density $K$ by
\begin{equation}
\label{eq:mKt}
    \mK_t f(x)= t^{-d}\int_{\R^d}K(t^{-1}y)f(x-y)\dd y~, \qquad f\in L^1_{loc}(\mathbb{R}^d)~.
\end{equation}
Important examples of such operators include 
the heat-semigroup 
\[
\mG_tf(x)= 
\frac{1}{(2\pi t^2)^{d/2}}\int_{\R^d}e^{-|y|^2/(2t^2)} f(x-y)\dd y~,
\]
the Hardy-Littlewood averages over balls $B_t=\{x\in \R^d\colon |x|\le t\}$ and over symmetric cubes $Q_t=[-t/2,t/2]^d$
\begin{align*}
    \mB_tf(x)= \frac{1}{|B_t|}\int_{B_t}f(x-y)\dd y~, \qquad 
\mQ_t f(x)=\frac{1}{|Q_t|}\int_{Q_t}f(x-y)\dd y~, 
\end{align*}
and, more generally, the Hardy--Littlewood averages over an arbitrary symmetric convex body $G\subset \mathbb{R}^d$ 
\[
\mK^G_t f(x) = \frac{1}{|G_t|}\int_{G_t}f(x-y)\dd y~, \qquad G_t=\lbrace x\in\mathbb{R}^d:\, t^{-1}x\in G \rbrace~.
\]

Let $\mathcal{A}_t$ be any of the above averaging operators, or any difference thereof. We write $\mA_*$ for the corresponding maximal operator,
\[
\mA_* f(x)=\sup_{t>0}|\mA_t f(x)|.
\]
For each  $p\in (1,\infty]$ we denote by
$\|\mA_*\|_{L^p(\R^d)}$ the operator norm on $L^p(\R^d)$ of $\mA_*$; that is, $\|\mA_*\|_{L^p(\R^d)}$ equals the best constant  $C$ in the inequality
\[
\Vert \sup_{t>0} |\mA_t f|\Vert_{L^p(\R^d)}\le C\|f\|_{L^p(\R^d)}.
\]
It is well known that if $K$ is a symmetric log-concave probability density on $\mathbb{R}^d$, then the associated maximal operator $\mK_\ast$ is bounded on $L^p(\mathbb{R}^d)$ for all $d\geq 1$, $p\in (1,\infty]$. In other words, $\|\mK_*\|_{L^p(\R^d)} $ is finite for all $d\in \N$ and $p\in (1,\infty].$  
We say that a symmetric probability density $K$ on $\mathbb{R}^d$ is isotropic with variance $\sigma^2$ if 
$$\int_{\mathbb{R}^d} x K(x) \dd x=0~, \qquad \int_{\mathbb{R}^d} \langle x,y\rangle^2 K(x) \dd x =\sigma^2 |y|^2 \quad \forall\; y\in\mathbb{R}^d~. $$
We stress that, for the purpose of estimating the operator norms, there is no loss of generality in restricting to isotropic, log-concave densities. In fact, any symmetric probability density may be rendered isotropic by a suitable invertible linear transformation, and this does not affect the value of the operator norm on $L^p(\mathbb{R}^d)$ of the associated maximal function.

If the density $K$ is radial and log-concave, then it must be radially decreasing. Thus, one may prove the pointwise inequality
\begin{equation}
\label{eq: point est}
    \sup_{t>0} |\mK_tf(x)| \; \leq \sup_{t>0} \; |\mB_tf(x)|,
\end{equation}
from which it immediately follows that
\begin{align}\label{eq_gauss_leq_ball_leq_sphere}
   \|\mK_*\|_{L^p(\R^d)} \, \leq \,  \|\mB_*\|_{L^p(\R^d)}.
\end{align}
Our first main goal in this work is to show that, in the limit as $d$ tends to infinity, \eqref{eq_gauss_leq_ball_leq_sphere} becomes an equality uniformly for all radial log-concave probability densities. This will be a consequence of the stronger statement in \eqref{eq:thm_rad_logconc_diff_lim}, which concerns the behavior of the maximal functions corresponding to the difference of a radial log concave isotropic probability density and the  isotropic Gaussian density with the same variance. 

In what follows, we let $(\mathcal{G}^\sigma)_t:=\mathcal{G}_{\sigma t},$ $t>0,$ so that the isotropic Gaussian kernel corresponding to $(\mathcal{G}^\sigma)_t$ has variance $\sigma^2 t^2.$  We are now ready to state our first theorem.

\begin{thm}\label{thm_rad_logconc}
    Let $K$  be any
    radial, log-concave probability density on $\mathbb{R}^d$ which is isotropic with variance $\sigma^2$.
    For each $p\in (1,\infty)$ there exist two positive constants $C_p,\alpha_p$ such that
    \begin{equation}\label{eq:thm_rad_logconc_diff_lim}
    \|(\mathcal{K}-\mathcal{G}^\sigma)_\ast\|_{L^p(\mathbb{R}^d)} \leq C_pd^{-\alpha_p}~.
    \end{equation}
Furthermore, for each $p\in (1,\infty)$  the limit $\lim_{d\rightarrow\infty}\|\mathcal{G}_\ast\|_{L^p(\mathbb{R}^d)}=:\lm(p)$ exists and
  \begin{equation}
  \label{eq:thm_rad_logconc_lim}
      \lim_{d\to \infty}\sup_{K\in \LCrad} \big| \|\mathcal{K}_\ast\|_{L^p(\mathbb{R}^d)}-\lambda(p)\big|=0,
    \end{equation}
    where $\LCrad$ denotes the set of all radial log-concave probability densities on $\R^d$.
Additionally, $\lm(p)$ satisfies  
    \begin{equation}
    \label{eq: lm bounds}
   \max\left\lbrace \frac25 \frac{p}{p-1}, 1 \right\rbrace \le \|\mG_\ast^1\|_{L^p(\mathbb{R})}\le \lm(p)\le \frac{p}{p-1}~. 
    \end{equation}
\end{thm}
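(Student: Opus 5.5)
Our plan has four parts: a transference principle that controls maximal functions by Fourier multipliers, multiplier estimates for radial log-concave densities, monotonicity of $\|\mG_*^d\|_{L^p(\R^d)}$ in $d$ to get the limit and the lower bound, and a direct argument for the upper bound.

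\textbf{Step 1: the difference estimate \eqref{eq:thm_rad_logconc_diff_lim}.} Write $m_K(\xi)=\widehat K(\xi)$ and $m_G^\sigma(\xi)=e^{-2\pi^2\sigma^2|\xi|^2}$ for the multipliers of $\mK_1$ and $\mathcal G^\sigma_1$, and set $m=m_K-m_G^\sigma$. We would control $\sup_{t>0}|(\mK_t-\mathcal G^\sigma_t)f|$ by a square-function/Littlewood--Paley argument, Stein-style. First decompose $f=\sum_j P_j f$ dyadically in frequency. Then bound the $L^2$ norm of the maximal function of each piece by
\[
\sup_{\xi}\Big(\int_0^\infty |m(t\xi)|^2\,\tfrac{dt}{t}\Big)^{1/2}+\sup_\xi\Big(\int_0^\infty |t\xi\cdot\nabla m(t\xi)|^2\,\tfrac{dt}{t}\Big)^{1/2},
\]
via the Sobolev embedding in $t$. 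The aim is to show these quantities are $O(d^{-\beta})$.

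\textbf{Step 2: the multiplier estimates.} Since $K$ is radial with variance $\sigma^2$, $m_K(\xi)=\mathbb E\,e^{2\pi i\langle X,\xi\rangle}$ with $X=R\theta$, where $\theta$ is uniform on the sphere and $R$ is independent of $\theta$. Thin-shell concentration for radial log-concave measures gives $\operatorname{Var}(R^2)\lesssim \sigma^4 d$, so $R/(\sigma\sqrt d)$ concentrates at $1$ with fluctuations $O(d^{-1/2})$. We would then compare $m_K(\xi)$ with the spherical multiplier at radius $\sigma\sqrt d$, and that in turn with the Gaussian $m_G^\sigma$; the uniform-in-$d$ estimate $|\widehat{\sigma_{S^{d-1}}}(\sqrt d\,\eta)-e^{-2\pi^2|\eta|^2}|\lesssim d^{-1/2}$ handles the second comparison. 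In addition we need decay and smallness estimates $|m(\xi)|\lesssim \min(\sigma^2|\xi|^2,(\sigma|\xi|)^{-1})$ that hold uniformly in $d$; these come from log-concavity, specifically the dimension-free Bourgain-type bounds for symmetric convex bodies and log-concave measures. Combining the $O(d^{-1/2})$ closeness with these decay bounds yields the $L^2$ estimate $\|(\mK-\mathcal G^\sigma)_*\|_{L^2}\lesssim d^{-\beta}$. Finally, we interpolate with the uniform $L^q$ bound $\|(\mK-\mathcal G^\sigma)_*\|_{L^q}\le 2\|\mB_*\|_{L^q}\le C_q$ (Stein's dimension-free bound together with \eqref{eq_gauss_leq_ball_leq_sphere}), which gives $C_pd^{-\alpha_p}$ for every $p\in(1,\infty)$. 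The interpolation needs a linearization of the maximal operator, which is standard. \textbf{Main obstacle:} getting the $L^2$ square-function bounds with a \emph{decaying} constant uniformly over all radial log-concave $K$. Near $|\xi|\sim \sigma^{-1}$ the concentration input must be quantitative, and at low and high frequencies uniform decay is required.

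\textbf{Step 3: monotonicity, existence of $\lambda(p)$, and \eqref{eq:thm_rad_logconc_lim}.} We would prove $\|\mG^d_*\|_{L^p(\R^d)}\le\|\mG^{d+1}_*\|_{L^p(\R^{d+1})}$ by tensorization. Take $g(x,y)=f(x)\phi_\epsilon(y)$ with $\phi_\epsilon$ a wide, slowly varying bump, $\phi_\epsilon(y)=\epsilon^{1/p}\phi(\epsilon y)$. The Gaussian factorizes, so for $t$ in a fixed compact range we get $\mG_t^{d+1}g\approx (\mG_t^d f)\,\phi_\epsilon$ as $\epsilon\to0$. Letting $\epsilon\to0$ and then exhausting the range of $t$ gives the inequality. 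The norms are also bounded by Stein's dimension-free bound, hence convergent to some $\lambda(p)$. Statement \eqref{eq:thm_rad_logconc_lim} then follows from Step 1 by the triangle inequality, using scale invariance of $\|\mathcal G^\sigma_*\|=\|\mG_*\|$.

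\textbf{Step 4: the bounds \eqref{eq: lm bounds}.} The lower bound $\|\mG_*^1\|_{L^p(\R)}\le\lambda(p)$ is immediate from monotonicity. To show $\|\mG^1_*\|\ge \frac25\frac{p}{p-1}$, we would test on $f=|x|^{-1/p}\mathbf 1_{|x|\le1}$ and compute $\mG_tf$ at $t\approx|x|$ explicitly. The bound $\|\mG^1_*\|\ge 1$ is trivial. For $\lambda(p)\le p/(p-1)$, we would use that $\mG_t$ is a symmetric diffusion semigroup: by Stein's maximal theorem, $\sup_t|\mG_tf|$ is controlled by ergodic averages of a positive contraction semigroup, which have norm $\le p/(p-1)$ by the Hopf--Dunford--Schwartz maximal ergodic theorem. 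Alternatively, by Step 1 and \eqref{eq:thm_rad_logconc_lim}, $\lambda(p)=\lim\|\mB_*\|$, and the ball averages are subordinate to the one-dimensional Hardy--Littlewood maximal function.
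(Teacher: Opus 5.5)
\textbf{Step 1 has a genuine gap.} Your $L^\infty$ comparison of the multipliers (thin shell for $R$, then sphere versus Gaussian) is sound and close to the paper's coupling argument in Proposition~\ref{propo:Linfty_diff}. The problem is the step that carries the theorem: the mechanism you propose for turning sup-norm smallness of $m=m_K-m_G^\sigma$ into a small $L^2$ bound for $(\mK-\mG^\sigma)_*$ does not work as stated.

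Your Sobolev-in-$t$ bound needs $\sup_\xi\int_0^\infty|t\xi\cdot\nabla m(t\xi)|^2\,\frac{dt}{t}$ to be finite uniformly in $d$ and $K$; you even ask for $O(d^{-\beta})$. The Bourgain-type estimates you invoke give only $|\langle\xi,\nabla m(\xi)\rangle|\lesssim1$, with no decay as $|\xi|\to\infty$. With that input the integral diverges logarithmically: for $d=1$, $K=\frac12\mathbf 1_{[-1,1]}$ one has $\xi m_K'(\xi)=\cos(2\pi\xi)-m_K(\xi)$. Nothing in your plan makes the derivative of the difference small either. Splitting $f=\sum_jP_jf$ does not help as described, because a global-in-$t$ bound applied to each piece gives no decay in $j$.

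The missing ingredient is the paper's transference principle, Theorem~\ref{general_thm}:
\begin{itemize}
\item Split $\sup_t$ into the dyadic maximal function, bounded through $\sum_n|a(2^n\xi)|^2\le J^{3/2}\|a\|_\infty^{1/2}\sum_n\min\{\cdot\}^{3/2}$, and short variations on $[2^n,2^{n+1}]$.
\item Handle the short variations by a Rademacher--Menshov inequality with Poisson projections $S_{j+n}$ tied to the scale $2^n$. One estimate decays like $2^{-|j|/2}2^{\ell/2}$ from the size conditions. The other gains $\|a\|_\infty^{\alpha/2}2^{-\ell(1-\alpha)/2}$ using only the local bound $|\langle\xi,\nabla a\rangle|\le J$.
\item Interpolating gives $\|\mathcal A_*\|_{L^2}\lesssim\|a\|_\infty^{1/4}$, so only $\|a\|_\infty$ has to be small.
\end{itemize}

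Your own framework can also be completed for radial $K$, but it needs an input you did not name. Use the product form $\|\sup_t|F_tf|\|_2^2\le2\|(\int_0^\infty|F_tf|^2\frac{dt}{t})^{1/2}\|_2\,\|(\int_0^\infty|t\partial_tF_tf|^2\frac{dt}{t})^{1/2}\|_2$, so that only the first factor must be small. By the size bounds it is $\lesssim\|m\|_\infty^2\log(e/\|m\|_\infty)\|f\|_2^2$. Bound the second factor uniformly via $m_K(\xi)=\mathbb E_R\,\mu(R\xi)$, Jensen, and $\int_0^\infty|r\mu'(r)|^2\frac{dr}{r}\lesssim1$ for large $d$. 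That last bound follows from the estimates behind Proposition~\ref{propo_estimate_for_mu}. It is a high-dimensional property of the spherical multiplier, not a consequence of log-concavity.

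\textbf{Step 4 also needs repair.} Stein's maximal theorem does not dominate $\sup_t|\mG_tf|$ by the ergodic averages $\frac1t\int_0^t\mG_sf\,ds$ with constant one; his proof adds a Littlewood--Paley $g$-function term. So Hopf--Dunford--Schwartz does not give $\lambda(p)\le\frac p{p-1}$. The classical route to this constant is Rota's dilation theorem plus Doob's inequality, as the paper cites.

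Your alternative for the upper bound is false. $\mB_t$ averages along rays against the increasing weight $d\,r^{d-1}t^{-d}$ on $[0,t]$, which is not a mixture of one-dimensional Hardy--Littlewood averages. The paper also points out that $\|\mB_*\|_{L^p(\R^d)}\le\frac p{p-1}$ is not known.

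For the lower bound, $|x|^{-1/p}\mathbf 1_{|x|\le1}\notin L^p(\R)$, so you need the usual truncation argument giving $\|\mG^1_*\|_{L^p(\R)}\ge\mG_*(|x|^{-1/p})(1)$. After that your homogeneous test does work, and it differs from the paper's Gaussian test function. It works only if you keep both sides of the singularity. With $\gamma$ the standard normal density and $a=1/p$, the bound $\gamma(1-u)+\gamma(1+u)\ge2\gamma(1)e^{-u^2/2}$ gives $\mG_1(|x|^{-1/p})(1)\ge2\gamma(1)\,2^{(1-a)/2}\Gamma(\frac{3-a}{2})\frac p{p-1}\ge\sqrt{2/(\pi e)}\,\frac p{p-1}\approx0.48\,\frac p{p-1}$. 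A one-sided estimate, such as the paper's first test lemma, gives only $\frac{2^{1-1/p}}{\sqrt{2\pi e}}\frac p{p-1}$, which falls below $\frac25\frac p{p-1}$ near $p=1$.

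Step 3, tensoring with a dilated bump, is correct and slightly simpler than the paper's truncated power.
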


Equation \eqref{eq:thm_rad_logconc_lim} asserts that, in the high-dimensional limit $d\rightarrow\infty$, the operator norms of maximal functions associated with radial log-concave densities are asymptotically equal to the same quantity $\lm(p)$.
The following corollary is an immediate consequence of Theorem \ref{thm_rad_logconc}.
\begin{corollary} For each $p\in (1,\infty)$
    \begin{equation}\label{eq:thm_rad_diff_balls_gauss}
       \|(\mathcal{B}-{\mathcal{G}^{1/\sqrt{d+2}}})_\ast\|_{L^p(\mathbb{R}^d)} \leq C_pd^{-\alpha_p}~,
    \end{equation}
    and, consequently,
\begin{align*}
    \lim_{d\rightarrow \infty} \|\mathcal{G}_\ast\|_{L^p(\mathbb{R}^d)}=\lim_{d\rightarrow \infty} \|\mathcal{B}_\ast\|_{L^p(\mathbb{R}^d)}
    =\lambda(p)~.
\end{align*}
\end{corollary}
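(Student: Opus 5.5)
The corollary is Theorem \ref{thm_rad_logconc} applied to one particular density, followed by the triangle inequality. Take $K=\frac{1}{|B_1|}\mathbf 1_{B_1}$, the normalized indicator of the unit ball. It is radial and log-concave: its support $B_1$ is convex, and $\log K$ is constant on $B_1$. With this choice $\mK_t=\mB_t$ for every $t>0$.

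First we check the isotropy constant. By symmetry $K$ has mean zero. By rotation invariance $\int \langle x,y\rangle^2K(x)\,\dd x=\sigma^2|y|^2$ with
\[
\sigma^2=\frac1d\int_{B_1}|x|^2\frac{\dd x}{|B_1|}.
\]
In polar coordinates,
\[
\int_{B_1}|x|^2\,\dd x=\frac{d|B_1|}{d+2},
\]
since $\int_0^1 r^{d+1}\,d\,\dd r\big/\int_0^1 r^{d-1}\,d\,\dd r = d/(d+2)$. Hence $\sigma^2=1/(d+2)$, so $\sigma=1/\sqrt{d+2}$. Inequality \eqref{eq:thm_rad_logconc_diff_lim} applied to this $K$ is then exactly \eqref{eq:thm_rad_diff_balls_gauss}. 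The constants $C_p,\alpha_p$ in Theorem \ref{thm_rad_logconc} are uniform in $K$ and $d$, which is what makes this step legitimate.

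For the limit statement, note that $(\mG^{\sigma})_t=\mG_{\sigma t}$. As $t$ ranges over $(0,\infty)$, so does $\sigma t$, hence $(\mG^\sigma)_*f=\mG_*f$ pointwise and $\|(\mG^\sigma)_*\|_{L^p(\R^d)}=\|\mG_*\|_{L^p(\R^d)}$. Sublinearity of the supremum gives, pointwise,
\[
\mB_*f\le (\mB-\mG^{\sigma})_*f+(\mG^\sigma)_*f,\qquad (\mG^\sigma)_*f\le (\mB-\mG^{\sigma})_*f+\mB_*f .
\]
Taking $L^p$ norms yields
\[
\big|\|\mB_*\|_{L^p(\R^d)}-\|\mG_*\|_{L^p(\R^d)}\big|\le C_pd^{-\alpha_p}\to0 .
\]
By Theorem \ref{thm_rad_logconc} the limit $\lim_{d\to\infty}\|\mG_*\|_{L^p(\R^d)}=\lm(p)$ exists, so $\|\mB_*\|_{L^p(\R^d)}\to\lm(p)$ as well. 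Alternatively, this follows directly from \eqref{eq:thm_rad_logconc_lim}, since $K\in\LCrad$.

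There is no real obstacle here. The only computation is the variance of the uniform measure on the ball, and the only point to watch is that the constants in \eqref{eq:thm_rad_logconc_diff_lim} do not depend on $K$, which is how Theorem \ref{thm_rad_logconc} is stated.
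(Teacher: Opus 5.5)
Your proposal is correct and is exactly the paper's route. The paper calls the corollary an immediate consequence of Theorem~\ref{thm_rad_logconc} applied to the normalized indicator of the unit ball, which is radial, log-concave and isotropic with $\sigma^2=1/(d+2)$. You have written out the variance computation, the identity $(\mG^\sigma)_*=\mG_*$, and the triangle-inequality step that the paper leaves implicit.
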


Our next result shows that the same convergence result holds for the $L^p(\mathbb{R}^d)$-operator norm of the spherical maximal function. For $t>0$ and every $x\in\mathbb{R}^d$, the spherical averaging operator is given by
$$\mathcal{S}_tf(x)=\frac{1}{\sigma(\mathbb{S}^{d-1})}\int_{\mathbb{S}^{d-1}}f(x-t\omega)\dd \sigma(\omega)~, \qquad f\in\mathcal{S}(\mathbb{R}^d)~,$$
where $\sigma$ is the surface measure of the $(d-1)$-dimensional unit sphere. The associated spherical maximal function is 
$$\mathcal{S}_\ast f(x)=\sup_{t>0} \; |\mathcal{S}_tf(x)|~.$$
It is known that $\|\mS_*\|_{L^p(\R^d)}$ is finite for $p> d/(d-1).$ This is due to Stein \cite{St_spher} in dimensions $d\ge 3$ and due to Bourgain \cite{Bo1} in dimension $d=2.$ We remark that for each fixed $p\in (1,\infty)$ we have $p>d/(d-1)$ when $d$ is large enough and in particular $\|\mS_*\|_{L^p(\R^d)}$ is finite when $d\to \infty.$
 The spherical maximal function controls pointwise any maximal function associated with a radial probability density. In particular, $\mB_\ast f(x)\leq \mS_\ast f(x)$ and, therefore,
\begin{align}\label{eq:sphere_geq_ball}
      \|\mB_*\|_{L^p(\R^d)} \, \leq \,  \|\mS_*\|_{L^p(\R^d)}.
\end{align}
Our next Theorem establishes that, in the limit as $d$ tends to infinity, \eqref{eq:sphere_geq_ball} becomes an equality.
\begin{thm}\label{thm:gaussian_sphere}
Let $p\in (1,\infty).$ Then there exists constants $C_p,\alpha_p>0$ and $d_0(p)>1$ depending only on p and such that
    \begin{equation}\label{eq:thm_spher_diff_lim}
      \|(\mathcal{S}-\mG^{\frac{1}{\sqrt{d}}})_\ast\|_{L^p(\mathbb{R}^d)} \leq C_pd^{-\alpha_p},\qquad d>d_0(p).
    \end{equation}
    Consequently,
    \begin{align}\label{eq:limitS}
    \lim_{d\rightarrow\infty}\|\mathcal{S}_\ast\|_{L^p(\mathbb{R}^d)}=\lambda(p)~.
    \end{align}
\end{thm}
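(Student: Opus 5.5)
The plan is to use the transference principle that the abstract announces: a maximal function $\sup_{t>0}|\mA_t f|$ is controlled on $L^p$ through its Fourier multiplier symbol $m(\xi)$. The relevant quantities are the decay of $m$ away from the origin, the size of $\xi\cdot\nabla m$, and the behaviour near the origin.

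Write $m_d(\xi)=\widehat{d\sigma}(\xi)$ for the normalized sphere; it is radial, $m_d(\xi)=\Phi_d(|\xi|)$, with $\Phi_d$ a Bessel quotient. The Gaussian with variance $1/d$ has symbol $g_d(\xi)=e^{-|\xi|^2/(2d)}$. The multiplier of $\mS_t-(\mG^{1/\sqrt d})_t$ is $m_d(t\xi)-g_d(t\xi)$, and after rescaling $\xi\mapsto\sqrt d\,\xi$ the variable $|\xi|$ is of order one. I would prove three bounds for $D_d(r):=\Phi_d(\sqrt d r)-e^{-r^2/2}$:
\begin{enumerate}[(i)]
\item a small uniform bound $|D_d(r)|+|rD_d'(r)|\lesssim d^{-\beta}$ on a range $r\le d^{\gamma}$;
\item decay of both symbols for large $r$;
\item the usual Stein-type derivative bounds $|r^k\partial_r^k\Phi_d|\lesssim_k (1+r)^{-(d-1)/2}$, rescaled.
\end{enumerate}
For (i), I would use that $\Phi_d(\sqrt d\,r)=\mathbb E\, e^{i r\sqrt d\,\theta_1}$, where $\theta$ is uniform on the sphere. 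The variable $\sqrt d\,\theta_1$ is close to a standard Gaussian in a strong sense: its density is $c_d(1-s^2/d)^{(d-3)/2}$. Comparing densities directly gives $|D_d|\lesssim d^{-1}(1+r^2)$, with a similar bound for the derivative. This is the thin-shell/CLT input alluded to in the abstract.

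Next, I would feed these symbol estimates into the transference principle. The standard scheme decomposes $f$ by Littlewood--Paley pieces, or writes the difference operator through a square function $\bigl(\int_0^\infty|t\partial_t(\mA_t f)|^2\,dt/t\bigr)^{1/2}$ combined with the $L^2$ bound. On $L^2$, the maximal function of the difference is bounded by
\[
\sup_\xi\Bigl(\int_0^\infty |D(t\xi)|\,|tD'(t\xi)|\,\frac{dt}{t}\Bigr)^{1/2},
\]
which by (i)--(ii) is $\lesssim d^{-\beta'}$. For $p\neq 2$, I would interpolate this $L^2$ gain against a uniform-in-$d$ bound on $L^{p_0}$ for some $p_0$ on the other side of $p$. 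The uniform bound comes from $\|\mS_*\|_{L^{p_0}}\le C_{p_0}$ for $d$ large and $p_0>1$ fixed, together with $\|(\mG^{1/\sqrt d})_*\|_{L^{p_0}}\le p_0/(p_0-1)$ from Theorem \ref{thm_rad_logconc}. The dimension-free spherical bound is the known Stein result: Stein's proof via the method of rotations / descent from Gaussian and spherical means gives a bound uniform in $d$ once $d\ge d_0(p_0)$. This is why the constant $d_0(p)$ appears in the statement. Riesz--Thorin then gives $C_pd^{-\alpha_p}$ for $d>d_0(p)$.

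For the consequence, the bound just obtained gives $\bigl|\|\mS_*\|_{L^p}-\|(\mG^{1/\sqrt d})_*\|_{L^p}\bigr|\le C_pd^{-\alpha_p}$. Dilating the Gaussian by $1/\sqrt d$ does not change the norm, so $\|(\mG^{1/\sqrt d})_*\|_{L^p}=\|\mG_*\|_{L^p(\R^d)}$, which tends to $\lambda(p)$ by Theorem \ref{thm_rad_logconc}. This yields \eqref{eq:limitS}.

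The main obstacle is the uniform-in-$d$ $L^{p_0}$ bound for $\mS_*$ when $p_0<2$. The square-function argument needs multiplier decay like $r^{-(d-1)/2}$ with derivatives, and the constants in this decay must not blow up with $d$. I would handle this by noting that $p_0$ can be taken arbitrarily close to $1$ as $d$ grows, and by citing Stein's dimension-free spherical estimate for $p>d/(d-1)$ rather than reproving it.
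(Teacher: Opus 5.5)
The overall architecture matches the paper's. You prove a small $L^2$ bound for $(\mS-\mG^{1/\sqrt d})_*$, interpolate it against dimension-free $L^{p_0}$ bounds for $\mS_*$ (Stein, $d\ge d_0(p_0)$) and $\mG_*$, and finish with the triangle inequality. Your deduction of \eqref{eq:limitS} is fine, and it does not even need the monotonicity of $\|\mS_*\|_{L^p(\R^d)}$ that the paper invokes. Your density comparison for (i) is also sound, and stronger than the paper's Gaussian thin-shell argument. The $L^1$ distance between the density $c_d(1-s^2/d)_+^{(d-3)/2}$ of $\sqrt d\,\theta_1$ and the standard Gaussian density is $O(1/d)$. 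Hence $|\mu(\xi)-g_{1/\sqrt d}(\xi)|\lesssim 1/d$ for \emph{all} $\xi$, versus $d^{-1/4+\varepsilon}$ in Proposition \ref{propo:pointwise_gauss_sphere}.

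\textbf{The gap.} It lies in the $L^2$ step, not where you locate it. The $L^{p_0}$ endpoint is just a citation. What is missing is $d$-uniform control of the spherical multiplier $\mu$ away from the scale $|\xi|\sim\sqrt d$, and of its radial derivative.

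Write $D=\mu-g_{1/\sqrt d}$. Density comparison gives no decay in $|\xi|$ at all, so it cannot control $\int_0^\infty|D(t\xi)|^2\,dt/t$ for large $t$.

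Your tool (iii) cannot fill this in. The Bessel bounds $|r^k\partial_r^k\Phi_d(r)|\lesssim_k(1+r)^{-(d-1)/2}$ carry constants that grow super-exponentially in $d$. In the rescaled variable they are false with uniform constants, since $\Phi_d(\sqrt d\,r)\to e^{-r^2/2}$ while $(1+r)^{-(d-1)/2}\to 0$ for each fixed $r>0$.

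What is needed are the uniform estimates of Proposition \ref{propo_estimate_for_mu}: $|\mu(\xi)-1|\lesssim|\xi|/\sqrt d$, $|\mu(\xi)|\lesssim\sqrt d/|\xi|$, and $|\langle\xi,\nabla\mu(\xi)\rangle|\lesssim 1$. The paper proves these in two regimes.
\begin{itemize}
\item For $r\le d$, it shifts the contour in $\int_{-1}^1 e^{2\pi i rs}(1-s^2)^{(d-3)/2}\,ds$, giving $e^{-2\pi r/\sqrt d}+e^{-cd}$.
\item For $r\ge d$, it uses $|J_\nu(r)|\le r^{-1/2}$ (valid for $r\ge2\nu$) together with Stirling.
\end{itemize}

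\textbf{Your square-function route needs a bit more.} What Cauchy--Schwarz and Plancherel actually give is
\[
\|\sup_{t>0}|(\mS_t-\mG_{t/\sqrt d})f|\|_{L^2}^2\le 2\Bigl(\sup_\xi\int_0^\infty|D(t\xi)|^2\tfrac{dt}{t}\Bigr)^{1/2}\Bigl(\sup_\xi\int_0^\infty|\langle t\xi,\nabla D(t\xi)\rangle|^2\tfrac{dt}{t}\Bigr)^{1/2}\|f\|_{L^2}^2,
\]
not the mixed integrand you wrote. So you need two further facts:
\begin{itemize}
\item $|D(\xi)|\lesssim|\xi|^2/d$ near $0$; your bound $d^{-1}(1+r^2)$ does not vanish there.
\item Square-integrability of $\langle t\xi,\nabla\mu(t\xi)\rangle$ in $dt/t$, uniformly in $d$, not just a pointwise bound.
\end{itemize}
For the sphere these do follow from the same contour-shift and Bessel estimates. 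Combined with your $O(1/d)$ bound, your route would then give an $L^2$ rate of roughly $d^{-1/2}(\log d)^{1/4}$, better than the paper's. But the estimates must be proved. The paper's Theorem \ref{general_thm} sidesteps the square-integrability by a Rademacher--Menshov argument that needs only $|\langle\xi,\nabla a\rangle|\le J$.
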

Our second main result asserts that $\lambda(p)$ provides a lower bound, as $d\rightarrow\infty$, for the operator norm on $L^p(\mathbb{R}^d)$ of any maximal function associated with a (symmetric) log-concave density.
\begin{thm}\label{thm:extremal}
Let  $p\in(1,\infty).$ We have
\begin{align}\label{eq:liminf_any_symmlogconcave}
    \lambda(p)\leq \liminf_{d\rightarrow \infty} \inf_{K\in {\LC}} \|\mathcal{K}_\ast\|_{L^p(\mathbb{R}^d)}
\end{align}
where ${\LC}$ denotes the family of symmetric log-concave probability densities on $\mathbb{R}^d$. In particular,
 among maximal functions associated to symmetric convex bodies, the norm of the maximal functions corresponding to the balls is asymptotically minimal, as $d\to \infty.$
\end{thm}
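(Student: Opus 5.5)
Given any $K\in\LC$, I will reduce to the isotropic case; this is harmless, since a linear change of variables does not change $\|\mK_*\|_{L^p(\R^d)}$. The strategy is to compare $\mK_*$ with a Gaussian maximal operator of the same variance in a weaker sense than in Theorem \ref{thm_rad_logconc}. For non-radial $K$ we cannot hope for $\|(\mK-\mG^\sigma)_*\|\to0$ uniformly over all directions. It suffices, however, to compare on a restricted class of test functions, namely those that nearly extremize $\|\mG_*\|_{L^p(\R^d)}$ and that, after averaging over rotations, are radial. The key observation is that the operator norm of $\mK_*$ dominates that of the rotation-averaged operator: for $U\in O(d)$ the kernel $K\circ U$ gives an operator of the same norm. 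For a radial $f$, the bound $\|\mK_* f\|_p$ is therefore the same as for $K\circ U$, and it dominates $\|\sup_t |\mathbb{E}_U \mathcal{K}^{U}_t f|\|_p$ only if we can move the supremum outside. That step is problematic, so I will instead use a lower-bound argument through Fourier multipliers.

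\textbf{Step 1 (lower bound via one-dimensional structure).} Fix $\varepsilon>0$ and a near-extremizer $f$ for $\|\mG_*\|_{L^p(\R^{n})}$ in a fixed dimension $n$, with ratio $\ge \lambda_n(p)-\varepsilon$, where $\lambda_n(p):=\|\mG_*\|_{L^p(\R^n)}$. By the monotonicity in dimension asserted in Theorem \ref{thm_rad_logconc}, $\lambda_n(p)\uparrow\lambda(p)$. Take $f$ a Schwartz function of the form $g\in\mathcal{S}(\R^n)$. For $d\ge n$, pick a random $n$-dimensional subspace $E$ and test $\mK_*$ on $F=g\circ P_E\otimes h$, where $h$ is a very spread-out function in the orthogonal directions (a dilate of a fixed bump), so that the averaging in $E^\perp$ is negligible up to a factor. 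The averaging operator $\mK_t$ acting on functions that depend only on $P_Ex$ is convolution with the marginal $\pi_E K$. This marginal is an $n$-dimensional isotropic log-concave density with variance $\sigma^2$.

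\textbf{Step 2 (Gaussian marginals).} By the central limit theorem for convex bodies (Klartag), combined with the thin-shell/variance bounds the paper imports, for a random $E$ (or all $E$ outside a small set) the marginal $\pi_E K$ is close to the Gaussian $\gamma_{\sigma}$ in total variation once $n\le d^{c}$. I will fix $n$ and let $d\to\infty$. Quantitatively, I would prefer to estimate the multiplier $\widehat{\pi_EK}(\xi)-e^{-\sigma^2|\xi|^2/2}$, $\xi\in E$, and apply the paper's transference principle in dimension $n$ to get $\|(\pi_E\mK-\mG^\sigma)_*\|_{L^p(\R^n)}\le \delta(d)\to 0$. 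The variance-type bound for general log-concave measures, i.e. the recent bounds on the KLS/thin-shell constant, gives $\mathrm{Var}|X|^2\lesssim d^{1+o(1)}$. This yields decay of $|\widehat{K}(\xi)-e^{-\sigma^2|\xi|^2/2}|$ in the form needed by the transference principle, at least for typical directions.

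\textbf{Step 3 (conclude).} By Steps 1 and 2, $\|\mK_*F\|_p\ge(\lambda_n(p)-\varepsilon-\delta(d))\|F\|_p$ up to a tensorization error, which vanishes as $h$ is spread out; the operator being a supremum, restricting to functions constant along $E^\perp$ directions is legitimate by a limiting argument. Taking $\liminf_d$ and then $n\to\infty$, $\varepsilon\to0$ gives \eqref{eq:liminf_any_symmlogconcave}. The statement about convex bodies follows by taking $K=|G|^{-1}\mathbf 1_G$, together with the corollary identifying $\lim\|\mB_*\|=\lambda(p)$.

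\textbf{Main obstacle.} The main obstacle is Step 2: we need a uniform multiplier estimate, valid in $L^p$ maximal sense, for the marginal of an arbitrary isotropic log-concave density on a fixed-dimensional subspace. The pointwise CLT is not enough; we need control of $\widehat{\pi_E K}-$Gaussian together with derivative/decay bounds for the transference. I would first try applying the transference to the difference multiplier with bounds from the variance estimate along one good direction, i.e. $n=1$. This already gives $\lambda(p)\ge\|\mG^1_*\|_{L^p(\R)}$, and I would then bootstrap to general $n$.
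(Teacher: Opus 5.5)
Your proposal has a genuine gap, and the idea it misses is the one the paper uses, which you considered and wrongly set aside. The step you called ``problematic'' is elementary. Let $\mK^R_t$ denote the averages with kernel $K\circ R$, $R\in SO(d)$. Then $\mathcal{A}^K_t f=\int_{SO(d)}\mK^R_t f\,\dd R$, so pointwise
\[
\sup_{t>0}\Big|\int_{SO(d)}\mK^R_t f(x)\,\dd R\Big|\le\int_{SO(d)}\sup_{t>0}|\mK^R_t f(x)|\,\dd R .
\]
The supremum moves \emph{inside} the average, which is the trivial direction. Since $\sup_{t>0}|\mK^R_t f|=\big(\mK_*(f\circ R^{-1})\big)\circ R$, Minkowski's integral inequality gives $\|\mathcal{A}^K_*\|_{L^p(\R^d)}\le\|\mK_*\|_{L^p(\R^d)}$. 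This holds for every $f\in L^p$, radial or not. Theorem \ref{thm_av_logconc} gives $\|\mathcal{A}^K_*\|_{L^p(\R^d)}\to\lm(p)$ uniformly in $K\in\LC$, and together these prove \eqref{eq:liminf_any_symmlogconcave} in two lines.

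Your replacement route leaves its key Step 2 unproved, as you acknowledge. The variance bound \eqref{eq:thin_shell} controls only the rotation average $\int_{SO(d)}\widehat K(R\xi)\,\dd R=\widehat{A^K}(\xi)$; that is Proposition \ref{propo:Linfty_diff}. By itself it says nothing about $\sup_{\xi\in E}|\widehat K(\xi)-g_\sigma(\xi)|$ on a fixed subspace $E$. To get there you need either an extra concentration-of-measure argument on $SO(d)$ combined with a net in $E$, or Klartag's total-variation CLT used as a black box.

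If you do invoke Klartag's power-law CLT, Step 2 is easier than you fear. Total-variation closeness directly bounds $\|\widehat{\pi_EK}-g_\sigma\|_{L^\infty(E)}$. The decay and derivative hypotheses of Theorem \ref{general_thm} then follow from the bounds for $\widehat K$ used in the proof of Lemma \ref{lemma:hyp_gen_thm}, applied to the symmetric isotropic log-concave marginal $\pi_EK$.

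Two further pieces would still be missing. First, Theorem \ref{general_thm} only gives an $L^2$ bound, so to reach $L^p$ you need uniform $L^q(\R^n)$ bounds for $(\pi_E\mK)_*$ to interpolate against. For fixed $n$ these follow, for example, from domination by $C_n$ times the Hardy--Littlewood maximal function. Second, the ``bootstrap from $n=1$'' is not an argument. The case $n=1$ only yields $\liminf_d\inf_K\|\mK_*\|_{L^p(\R^d)}\ge\|\mG^1_*\|_{L^p(\R)}$, not $\lm(p)$.

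So your route can be completed only with considerably heavier machinery, whereas the averaging comparison above needs nothing beyond Theorem \ref{thm_av_logconc}.
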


To prove Theorem \ref{thm:extremal} we introduce an auxiliary family of convolution and maximal operators that are associated with spherical averages of symmetric log-concave densities. For a kernel $K\in L^1(\mathbb{R}^d)$ we define its spherical average as
\begin{equation} 
\label{eq:spherA}
A^K(x)=\int_{SO(d)}K(Rx)\dd R= \frac{1}{\sigma(\mathbb{S}^{d-1})}\int_{\mathbb{S}^{d-1}}K(|x|u)\dd \sigma(u)~.
\end{equation}
Here, we are considering $SO(d)$ endowed with the normalized Haar measure. Clearly, when $K$ is radial $A^K=K$.
However, when $K$ is an arbitrary log-concave density, the property of being log-concave may not be preserved by spherical averaging. 

\begin{thm}\label{thm_av_logconc}
    Let $A^K$ be the spherical average \eqref{eq:spherA} of a symmetric log-concave probability density $K$. We assume $K$ to be isotropic with variance $\sigma^2$. Then, for each $p\in(1,\infty)$ there exist constants $C_p>0$ and $\alpha_p>0$ such that
    \begin{equation}\label{eq:thm_av_logconc_diff_lim}
      \|(\mathcal{A}^K-\mathcal{G}^\sigma)_\ast\|_{L^p(\mathbb{R}^d)} \le C_p d^{-\alpha_p}.
    \end{equation}
   Furthermore, for each $p\in (1,\infty)$ 
    \begin{equation}\label{eq:thm_av_logconc_lim}
     \lim_{d\rightarrow\infty} \sup_{K\in{\LC}} \big| \|\mathcal{A}^K_\ast\|_{L^p(\mathbb{R}^d)}-\lambda(p)\big|=0~. \\
       \end{equation}
\end{thm}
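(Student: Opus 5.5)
We will prove \eqref{eq:thm_av_logconc_diff_lim} by the Fourier multiplier transference principle used for Theorem \ref{thm_rad_logconc}, which reduces the maximal estimate for a difference of radial kernels to bounds on their multipliers. Since $A^K$ is radial, its Fourier transform is radial: $\widehat{A^K}(\xi)=m(|\xi|)$, where $m(r)=\int_{SO(d)}\widehat K(rR\theta)\,\dd R$ for any fixed unit vector $\theta$. Write $g(r)=e^{-2\pi^2\sigma^2 r^2}$ for the Gaussian multiplier. After rescaling to $\sigma^2=1/d$, so that the relevant scale is $|\xi|\sim 1$, we aim for two estimates. The first is a difference bound $|m(r)-g(r)|\lesssim d^{-\beta}\min(r^2,r^{-2})$, or some other integrable-in-$\log r$ decay at both ends. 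The second is a derivative bound $|r\,\partial_r(m-g)(r)|\lesssim \min(r,r^{-1})^{\gamma}$, uniform in $d$. Interpolating these via the Littlewood--Paley square function argument of the transference principle then gives $d^{-\alpha_p}$ for every $p\in(1,\infty)$. The averaging over the rotation group makes the symmetrization legitimate: $\mathcal A^K_t=\int_{SO(d)} \mathcal K_t^{R}\,\dd R$, and the spherical average turns $K$ into a radial kernel to which the earlier radial machinery applies.

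The key step is to control $m$ through the distribution of $|X|$, where $X\sim K$. Because $A^K$ is the density of $|X|\,U$ with $U$ uniform on the sphere and independent of $|X|$, we have $m(r)=\mathbb E\,\Phi_d(r|X|)$, where $\Phi_d$ is the Fourier transform of the normalized surface measure on $\mathbb S^{d-1}$, a normalized Bessel function. The Gaussian multiplier is the same expression with $|X|$ replaced by the norm of a standard Gaussian vector scaled to the same variance. Everything therefore reduces to comparing the laws of two radii concentrated near $1$ (after normalization, $\mathbb E|X|^2=1$).

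Two inputs are needed. The first is a thin-shell, i.e.\ variance, bound for isotropic log-concave measures: $\mathrm{Var}|X|\lesssim (\log d)^{C}$ in the unnormalized scaling, by the recent work of Klartag following Chen's breakthrough on the KLS conjecture. After normalization this gives $\mathbb E\big||X|-1\big|\lesssim d^{-1/2}(\log d)^{C}$. The second is the Lipschitz behaviour of $\Phi_d$ at scale one: from its Taylor and Bessel asymptotics, $|\Phi_d'(s)|\lesssim \min(s,1)$ and $\Phi_d(s)\approx e^{-s^2/2}$ for $s\lesssim 1$, uniformly in $d$. Combining the two yields $|m(r)-\Phi_d(r)|\lesssim r\,\mathbb E\big||X|-1\big|\lesssim d^{-1/2+\epsilon}\,r$ for $r\lesssim1$, and the Gaussian multiplier satisfies the same bound. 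For small $r$ we instead use the second-order Taylor expansion, since the second moments agree, to get the factor $r^2$.

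For large $r$, both $m$ and $g$ must be shown to decay. Here we use the Bessel decay $|\Phi_d(s)|\lesssim (1+s^2/d)^{-c d}$ in the regime $s\gtrsim 1$, together with small-ball estimates for log-concave measures, which give $\mathbb P(|X|\le \delta)\le (C\delta)^{cd}$ in the normalized scaling. These yield $|m(r)|\lesssim r^{-c}$, and the required $r\partial_r$ bounds follow similarly, using $s\Phi_d'(s)$ in place of $\Phi_d$.

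I expect the main obstacle to be the uniform-in-$K$ thin-shell input at a polynomial rate in $d$, which must be imported from the KLS/thin-shell literature. The rest is bookkeeping of Bessel estimates. Finally, \eqref{eq:thm_av_logconc_lim} follows from \eqref{eq:thm_av_logconc_diff_lim} and the triangle inequality $\big|\|\mathcal A^K_*\|-\|\mathcal G^\sigma_*\|\big|\le\|(\mathcal A^K-\mathcal G^\sigma)_*\|$. Here $\|\mathcal G^\sigma_*\|_{L^p(\R^d)}=\|\mathcal G_*\|_{L^p(\R^d)}$ by scaling, and this tends to $\lambda(p)$ by Theorem \ref{thm_rad_logconc}. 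Reducing an arbitrary symmetric $K$ to the isotropic case is harmless for the supremum, since the constants $C_p,\alpha_p$ do not depend on $K$.
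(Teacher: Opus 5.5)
\textbf{Main gap: the passage from multiplier estimates to $L^p$.} Everything on the Fourier side of your sketch is $L^2$ information. That includes the difference bound, the $r\partial_r$ bound, and the square-function/transference argument that uses them. Theorem \ref{general_thm} is an $L^2$ statement, and sup-norm multiplier bounds do not by themselves give dimension-free $L^p$ square-function estimates for $p$ near $1$.

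So as written you get \eqref{eq:thm_av_logconc_diff_lim} for $p=2$. You also get $p>2$ by interpolating with the trivial bound $\|(\mathcal{A}^K-\mathcal{G}^\sigma)_*\|_{L^\infty}\le 2$. You do not get $p\in(1,2)$.

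The missing ingredient is a bound $\sup_d\|(\mathcal{A}^K-\mathcal{G}^\sigma)_*\|_{L^q(\mathbb{R}^d)}<\infty$, uniform in $K$, for every $q>1$. You then interpolate it (Marcinkiewicz) against the $L^2$ decay. This is where the radiality of $A^K$ is used a second time. $A^K$ is a radial probability density, and even radially nonincreasing, since $t\mapsto K(tv)$ is symmetric and log-concave for each $v$. Hence $\mathcal{A}^K_*f\le\mathcal{S}_*|f|$ (or $\le\mathcal{B}_*|f|$). Stein's dimension-free bounds together with $\|\mathcal{G}_*\|_{L^q}\le q/(q-1)$ then give the uniform $L^q$ bound. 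The paper closes the argument exactly this way.

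\textbf{Comparison with the paper's multiplier estimate.} Your estimate is a rearrangement of Proposition \ref{propo:Linfty_diff}. The paper couples the two radii through a common Gaussian direction, comparing $a(\xi)=\mathbb{E}_Y\mathbb{E}_X e^{-2\pi i|Y|\langle X/|X|,\xi\rangle}$ with $g_{1/\sqrt d}(\xi)=\mathbb{E}_X e^{-2\pi i\langle X/\sqrt d,\xi\rangle}$. You instead pass through the spherical multiplier $\mu$ by the triangle inequality. Either way the gain is $\sigma|\xi|\,\mathbb{E}\big||Y|/(\sigma\sqrt d)-1\big|$, coming from the thin-shell bound.

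Two corrections are needed here. First, with $\sigma^2=1/d$ the transition happens at $|\xi|\sim\sqrt d$, so your $r$ should be read as $\sigma|\xi|$. Second, and more importantly, the Lipschitz comparison must be used for $r$ up to a small power of $d$, not only for $r\lesssim1$. At $r\sim1$ neither $m$ nor $g$ is small, so decay alone gives no factor $d^{-\beta}$ there. Since $|\Phi_d'|\lesssim1$ globally, you have $|m-g|\lesssim\min(r\,d^{-1/2+\epsilon},r^{-c})$, and interpolating gives the decaying $d^{-\beta}$ bound you want.

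Your route to the decay and $r\partial_r$ bounds, via Bessel asymptotics plus small-ball estimates, is heavier than needed. Averaging Bourgain's estimates for $\widehat K$ over $SO(d)$ (Lemma \ref{lemma:hyp_gen_thm}) gives $|a-1|\le2\pi\sigma|\xi|$, $|a|\le(2\pi\sigma|\xi|)^{-1}$ and $|\langle\xi,\nabla a\rangle|\le2$ at once. The paper's transference theorem needs only $\|a-g_\sigma\|_\infty$ small plus these non-decaying bounds. Your classical $\int|F|\,|tF'|\,dt/t$ argument needs decay of $r\partial_r(m-g)$ at infinity, which is what forces the extra analysis. It is workable here because $A^K$ is radial.

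\textbf{Two smaller points.} Your last sentence is not correct. Unlike $\|\mathcal{K}_*\|_{L^p}$, the norm $\|\mathcal{A}^K_*\|_{L^p}$ is not invariant under $K\mapsto|\det T|\,K(T\,\cdot)$, because spherical averaging does not commute with linear maps. For a non-isotropic Gaussian $K$, for example, $A^K$ is a non-Gaussian scale mixture. So the supremum in \eqref{eq:thm_av_logconc_lim} has to be taken over isotropic $K$. This is how the paper's argument uses it, and it is all that Theorem \ref{thm:extremal} requires.

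Also, attribute the existence of $\lambda(p)$ to Theorem \ref{propo:GausMonotonicity} rather than Theorem \ref{thm_rad_logconc}. The paper deduces the latter from the statement you are proving.
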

We obtain \eqref{eq:thm_rad_logconc_diff_lim} in Theorem \ref{thm_rad_logconc} as a particular case of \eqref{eq:thm_av_logconc_diff_lim}. Moreover, Theorem \ref{thm:extremal} will be a consequence of \eqref{eq:thm_av_logconc_lim} combined with the fact that
$$\|\mathcal{A}^K_\ast\|_{L^p(\mathbb{R}^d)}\leq \|\mathcal{K}_\ast\|_{L^p(\mathbb{R}^d)}~.$$

We prove the existence of the limits in Theorem \ref{thm_rad_logconc} as well as the lower bounds on $\lambda(p)$ in \eqref{eq: lm bounds} as a consequence of the following result for the $L^p(\mathbb{R}^d)$-operator norm of the Gaussian maximal function. The upper bound on $\lambda(p)$ in \eqref{eq: lm bounds} is a consequence of the known bound \eqref{eq: mG p/(p-1)}.

\begin{thm}
    \label{propo:GausMonotonicity}
    For each $p\in(1,\infty)$ and $d\geq 1$ we have that
    \begin{align}\label{eq:gauss_norm_mono_intro}
        \|\mG_\ast^{d+1}\|_{L^p(\mathbb{R}^{d+1})}\geq \|\mG_\ast^{d}\|_{L^p(\mathbb{R}^{d})}~.
    \end{align}
    Moreover, there exist $0.4 <c\leq 1$ such that $\|\mG_\ast^{1}\|_{L^p(\mathbb{R})}\geq \max \left\lbrace c \frac{p}{p-1} , 1 \right\rbrace $.  In particular, $\|\mG_\ast^{1}\|_{L^p(\mathbb{R})}> \frac{2}{5} \frac{p}{p-1}>1$ for all $p\in (1,\frac{5}{3})$.
\end{thm}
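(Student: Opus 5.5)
For the monotonicity \eqref{eq:gauss_norm_mono_intro} I would use the tensor structure of the Gaussian kernel. Write $\R^{d+1}=\R^d\times\R$ with $x=(x',x_{d+1})$. Then $\mG^{d+1}_t=\mG^d_t\otimes\mG^1_t$, since the heat kernel factorizes. Take $f\in L^p(\R^d)$ and a fixed $\varphi\ge0$ in $L^p(\R)$ that is very spread out, for instance $\varphi_R(s)=R^{-1/p}\varphi(s/R)$ with $\varphi$ a smooth bump equal to $1$ near the origin. Test $\mG^{d+1}_*$ on $F=f\otimes\varphi_R$; we may assume $f\ge0$, since $|\mG_tf|\le\mG_t|f|$. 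Then
\[
\mG^{d+1}_tF(x',x_{d+1})=\mG^d_tf(x')\,\mG^1_t\varphi_R(x_{d+1}).
\]
Fix $T<\infty$ and restrict the supremum to $t\le T$. For $|x_{d+1}|\le R/2$ and $R\gg T$, the factor $\mG^1_t\varphi_R(x_{d+1})$ is uniformly close to $R^{-1/p}$ on $t\le T$. Hence
\[
\|\mG^{d+1}_*F\|_p^p\ge (1-o(1))\,\|\sup_{t\le T}\mG^d_tf\|_p^p\cdot R^{-1}\cdot R,
\]
while $\|F\|_p^p=\|f\|_p^p\,\|\varphi\|_p^p$. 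Choosing $\varphi$ close to the indicator of $[-1/2,1/2]$ makes the ratio tend to $\|\sup_{t\le T}\mG^d_tf\|_p/\|f\|_p$. Letting $R\to\infty$, then $T\to\infty$ (monotone convergence), and taking the supremum over $f$ gives $\|\mG^{d+1}_*\|\ge\|\mG^d_*\|$. This is routine bookkeeping; the only care needed is the order of limits.

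For the lower bound in $d=1$, the bound $\ge1$ is trivial: apply the operator to any $f\ge0$ and let $t\to0$. For $c\,p/(p-1)$ I would test on $f=\mathbf 1_{[-1,1]}$, or better on a power-type function near the endpoint, such as $f(x)=|x|^{-1/p}\mathbf 1_{|x|\le1}$, which is nearly extremal for the Hardy operator as $p\to1$. For $|x|>1$, choose $t\approx|x|$. This gives
\[
\mG_*f(x)\ge \sup_t \frac{1}{\sqrt{2\pi}t}e^{-(|x|+1)^2/(2t^2)}\,\|f\|_1 .
\]
Optimizing at $t=|x|+1$ yields $\mG_*f(x)\ge \frac{e^{-1/2}}{\sqrt{2\pi}}\frac{2}{|x|+1}$. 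Then
\[
\|\mG_*f\|_p^p\ge 2\Big(\frac{2e^{-1/2}}{\sqrt{2\pi}}\Big)^p\int_1^\infty (x+1)^{-p}\,dx\sim \frac{C^p}{p-1}\cdot 2^{1-p}.
\]
With $\|f\|_p^p=2$, the ratio behaves like $C'(p-1)^{-1/p}$. That grows only like $(p-1)^{-1}$ after taking the $p$-th root as $p\to1$, which is the right order but loses constants. To get the sharp shape $p/(p-1)$ I would instead use $f_\varepsilon(x)=|x|^{-(1+\varepsilon)/p}$ near $\infty$ (truncated), or use the scale-invariant test $f(x)=|x|^{-1/p}$ truncated to $[\delta,1]$. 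Comparing $\mG_*f(x)\ge\mG_{|x|}f(x)\ge \kappa\frac{1}{|x|}\int_0^{|x|}f$ pointwise, with $\kappa=\inf_{|y|\le|x|}\frac{2|x|}{\sqrt{2\pi}|x|}e^{-(2|x|)^2/(2|x|^2)}$ or a better choice of $t=a|x|$, reduces the problem to the sharp Hardy inequality, whose norm is $p/(p-1)$. The resulting constant is $c=\sup_a \frac{1}{\sqrt{2\pi}a}\int_{0}^{1}e^{-(1+u)^2/(2a^2)}\,du$-type expression. Optimizing over $a$ (and using the two-sided averaging cleverly, i.e. $\frac1{2|x|}\int_{-|x|}^{|x|}$ with the Gaussian's minimum on that window) should give a number slightly above $0.4$.

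The main obstacle is this numerical step: choosing the Gaussian scale $t=a|x|$ and the averaging window so that the pointwise constant provably exceeds $2/5$. I would first try $f(x)=|x|^{-1/p}\mathbf 1_{\delta<|x|<1}$ and minorize the Gaussian by its value at the far edge of $[-|x|,|x|]$. If that loses too much, I would use the full Gaussian weight against the decreasing profile, via a layer-cake comparison. The remaining assertion, $\frac25\frac{p}{p-1}>1$ iff $p<5/3$, is arithmetic.
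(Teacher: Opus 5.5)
Your monotonicity argument is correct, and it takes a different route from the paper. The paper tests on $f\otimes\varphi_N$ with the homogeneous profile $\varphi_N(s)=|s|^{-1/p}\mathbf 1_{[1,N]}(|s|)$. It then uses evenness of the kernel and convexity of $|s|^{-1/p}$ on each half-line to get $\mG^1_t\varphi_N\ge I(M)\varphi_{N/2}$ for $|s|\ge 10TM$. Your flat plateau $\varphi_R$ needs only positivity, the tensor structure and tightness of the one-dimensional kernel. You do have to discard an $o(R)$-neighbourhood of the edge of the plateau, where $\mG^1_t\varphi_R$ is not close to $R^{-1/p}$. The bound $\ge 1$ and the final arithmetic are also fine.

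The gap is the constant $c>0.4$, which is the only nontrivial quantitative content of the second half. You leave it at ``should give a number slightly above $0.4$'', and none of the explicit routes you write down reaches it.

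\begin{itemize}
\item Minorizing the Gaussian by its value at the far end of $[-|x|,|x|]$ with $t=a|x|$ gives, for $f\ge 0$,
\[
\mG_{a|x|}f(x)\ge \frac{2}{\sqrt{2\pi}\,a}\,e^{-2/a^2}\,Hf(x),\qquad Hf(x):=\frac{1}{2|x|}\int_{-|x|}^{|x|}f(y)\,\dd y .
\]
The optimal choice $a=2$ yields only $\kappa=e^{-1/2}/\sqrt{2\pi}\approx 0.242$.
\item Your displayed ``$\sup_a$'' expression equals $\sup_a\bigl(\Phi(2/a)-\Phi(1/a)\bigr)\approx 0.16$, where $\Phi$ is the standard normal distribution function.
\item The indicator test with your pointwise bound gives at best $\bigl(1+2(0.242)^p/(p-1)\bigr)^{1/p}$. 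This falls below $\frac25\frac{p}{p-1}$ for $p$ around $1.2$.
\end{itemize}

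For comparison, the paper does not use Hardy's inequality. It tests on the Gaussian $\gamma$, for which $\sup_t\mG_t\gamma$ is explicit, and obtains $\|\mG_*\|_{L^p(\R)}^p\ge\sqrt{2p/(\pi e^p)}\,\frac{p}{p-1}$. A calculus lemma then shows the normalized constant has infimum just above $0.4$ (about $0.401$).

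Your layer-cake fallback does close the gap, and even gives a better constant, but it has to be carried out with a monotone test function. Take $f$ even and nonincreasing in $|y|$, and write $f=\int_0^\infty\mathbf 1_{(-r(\lambda),r(\lambda))}\,\dd\lambda$. The scale $t=|x|$ does not depend on $\lambda$, so it suffices to compare on symmetric intervals. This yields $\mG_{|x|}f(x)\ge\kappa\,Hf(x)$ with
\[
\kappa=\inf_{s>0}\frac{\Phi(s-1)-\Phi(-1-s)}{\min(s,1)}=\tfrac12-\Phi(-2)\approx 0.477 .
\]
Now test on $f_\varepsilon(y)=|y|^{-1/p+\varepsilon}\mathbf 1_{\{|y|\le 1\}}$. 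It is even and monotone, and $\|Hf_\varepsilon\|_p/\|f_\varepsilon\|_p\to\frac{p}{p-1}$. This gives $\|\mG_*\|_{L^p(\R)}\ge 0.477\,\frac{p}{p-1}$. Your preferred test function $|x|^{-1/p}\mathbf 1_{\{\delta<|x|<1\}}$ vanishes near the origin, so it is not monotone and the interval comparison does not apply to it directly.
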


\begin{remark}
\label{rem: TensMax}
%We stress that 
The proof of \eqref{eq:gauss_norm_mono_intro} relies solely on the fact that the Gaussian kernel is positive, even, and of tensor product type.
In particular, the same monotonicity property holds for any maximal function whose convolution kernel is positive, even, and of tensor product type; an example of this is the centered maximal function associated with cubes in $\mathbb{R}^d$.
\end{remark}

The tools we utilize to prove Theorem \ref{thm_av_logconc} include a general transference principle which allows us to control maximal functions via Fourier multiplier symbols, and variance and thin-shell type bounds for isotropic log-concave random vectors.

Our transference principle is the content of Theorem \ref{general_thm} below. 
\begin{thm}\label{general_thm}
    Let $a$ be a bounded,  almost everywhere differentiable function on $\mathbb{R}^d$. Assume that there exist constants $\sigma>0$ and $J>0$
such that the following inequalities:
\begin{align}\label{eq_general_thm_pointwise_estimate_multiplier_1}
    |a(\xi)|\leq J \sigma|\xi| ~,
\qquad
    |a(\xi)|\leq J \frac{1}{\sigma|\xi|} ~,
\end{align}
\begin{align}\label{eq_general_thm_pointwise_estimate_multiplier_gradient}
    \vert \langle \xi, \nabla a(\xi)\rangle \vert \leq J ~,
\end{align}
hold for almost every $\xi \in \R^d.$ Consider the family of Fourier multiplier operators $\lbrace \Fm_t\rbrace_{t>0}$ given by
    $$\widehat{\Fm_t f}(\xi)=a(t\xi)\widehat{f}(\xi)~.$$
Then there exists a constant $C>0$ depending only on $J$ and such that 
    \begin{equation}
    \label{eq_general_thm_pointwise_estimate_multiplier_conclusion}
        \Vert  \Fm_*  \Vert_{L^2(\mathbb{R}^d)} \leq C \Vert a \Vert_{L^\infty(\mathbb{R}^d)}^{ 1/4}~.
    \end{equation}
\end{thm}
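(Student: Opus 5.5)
By scaling we may assume $\sigma=1$. Replacing $f$ by $f(\sigma\,\cdot)$ turns $a$ into $a(\cdot/\sigma)$ and leaves the $L^2$ norm of the maximal operator unchanged. The plan is the classical Stein/Rubio de Francia argument: split the maximal function into a square function, handled by Plancherel, and a dyadic part. Throughout, write $\|a\|_\infty=:\varepsilon$, and assume $\varepsilon\le J$; otherwise interpolate trivially.

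\emph{Dyadic supremum.} First I would bound $\sup_{k\in\mathbb Z}|\Fm_{2^k}f|$ by the square function $\big(\sum_k|\Fm_{2^k}f|^2\big)^{1/2}$. By Plancherel its $L^2$ norm squared is at most
\[
\sup_\xi\sum_k |a(2^k\xi)|^2\|f\|_2^2 .
\]
Using $|a|\le\min\{\varepsilon,J|\xi|,J/|\xi|\}$, this sum is $\lesssim \varepsilon^2\log(2+J/\varepsilon)+J^2$. That is not good enough, because the decay must come from $\varepsilon$.

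So instead I would interpolate inside each term: $|a|\le \varepsilon^{1/2}\min\{J|\xi|,J/|\xi|\}^{1/2}$. With this, $\sum_k|a(2^k\xi)|^2\lesssim \varepsilon J\sum_k\min(2^k,2^{-k})\lesssim \varepsilon J$. The dyadic part therefore costs $C\varepsilon^{1/2}$.

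\emph{Local variation within dyadic scales.} For $t\in[2^k,2^{k+1}]$ I would use the fundamental theorem of calculus in $\log t$ together with Cauchy--Schwarz:
\[
\sup_{t\in[2^k,2^{k+1}]}|\Fm_tf-\Fm_{2^k}f|^2\le \int_{2^k}^{2^{k+1}}|\Fm_tf|\,|\widetilde{\Fm}_tf|\,\frac{dt}{t}\cdot 2 .
\]
Here $\widetilde{\Fm}_t$ has symbol $b(t\xi)$ with $b(\xi)=\langle\xi,\nabla a(\xi)\rangle$, since $t\partial_t a(t\xi)=b(t\xi)$. Summing over $k$ and applying Cauchy--Schwarz in $t$ gives the bound
\[
\Big(\int_0^\infty|\Fm_tf|^2\tfrac{dt}{t}\Big)^{1/2}\Big(\int_0^\infty|\widetilde{\Fm}_tf|^2\tfrac{dt}{t}\Big)^{1/2}.
\]
The $L^1$ norm of this product is controlled by the product of the $L^2$ norms of the two continuous square functions. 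By Plancherel these are $\sup_\xi\int_0^\infty|a(t\xi)|^2\frac{dt}t$ and the analogous quantity for $b$.

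\emph{The two continuous square functions.}
\begin{itemize}
\item For the first, the same interpolation trick gives $\lesssim \varepsilon J$.
\item For $b$ we only know $|b|\le J$, with no decay. This is the main obstacle: $\int|b(t\xi)|^2dt/t$ may diverge.
\end{itemize}
To fix this, I would insert a smooth Littlewood--Paley-type cutoff. Choose $\phi$ with $\hat\phi$ equal to $1$ near the origin, and decompose $a=a\hat\phi+a(1-\hat\phi)$ after rescaling the cutoff to a scale $\delta$. Then I would treat the regions $|\xi|\le\delta$ and $|\xi|\ge 1/\delta$ by the first-order decay in \eqref{eq_general_thm_pointwise_estimate_multiplier_1}.

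On those regions the maximal function is dominated by $\sup_t|a(t\xi)|$ times standard maximal operators. Concretely, one compares with $a(t\xi)\hat\phi(t\xi/\delta)$, whose maximal function is bounded via the dyadic argument above with symbols of size $\le J\delta$. On the annulus $\delta\le|\xi|\le1/\delta$ I would use $\int|b|^2dt/t\lesssim J^2\log(1/\delta)$. The truncated $b$ picks up an additional term from the cutoff derivative, bounded by $J$ on the same annulus, so the total is at most $J^2\log(1/\delta)$.

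\emph{Optimizing.} This yields
\[
\|\Fm_*\|\lesssim \varepsilon^{1/2}+ J\delta^{1/2}+(\varepsilon J)^{1/4}\big(J^2\log(1/\delta)\big)^{1/4}.
\]
Choosing $\delta=\varepsilon$ gives $\varepsilon^{1/4}\log(1/\varepsilon)^{1/4}$, which is slightly worse than claimed. To remove the logarithm I would refine the annulus estimate dyadically. On $|t\xi|\sim 2^j$, use $|a|\le \min(\varepsilon, J2^{-|j|})$ in the first square function localized to that annulus, and pair it with the $b$ square function localized to the same annulus, which is $\lesssim J$ per annulus. Summing the products $\min(\varepsilon,J2^{-|j|})^{1/2}J^{1/2}$ over $j$ gives $\lesssim (\varepsilon J)^{1/2}$ up to a constant with no logarithm, because the terms are dominated by the $2^{-|j|/2}$ tail. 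Taking the square root, this should give the claimed $C\|a\|_\infty^{1/4}$.

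The delicate point is justifying this annular localization of the product, i.e.\ almost-orthogonality between different annuli. I expect that to be the hardest step.
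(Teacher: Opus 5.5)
\textbf{There is a genuine gap.} It sits exactly where you flag it: nothing in the proposal actually gets around the fact that $b=\langle\xi,\nabla a\rangle$ is merely bounded.

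\emph{The $\delta$-cutoff step does not work.} The pieces $a\,\hat\phi(\cdot/\delta)$ and $a\,(1-\hat\phi(\delta\,\cdot))$ inherit the same defect. Their radial derivatives are still only bounded near $0$, respectively near $\infty$. For instance, in $d=1$ the symbol $a(\xi)=\xi\sin(1/\xi)$ near $0$ satisfies the hypotheses, while $\xi a'(\xi)=\xi\sin(1/\xi)-\cos(1/\xi)$ does not decay. So the continuous square functions of these pieces diverge just as before. Your ``dyadic argument'' controls only $t\in 2^{\mathbb Z}$. The claim that the maximal function of a multiplier of size $\le J\delta$ is dominated by $J\delta$ times a standard maximal operator is unjustified, since a bound on $\|c\|_\infty$ gives no pointwise control of the kernel. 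Hence the term $J\delta^{1/2}$ has no basis.

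\emph{The final refinement is left unproved, and its arithmetic is off.} The sum $\sum_j\min(\varepsilon,J2^{-|j|})^{1/2}$ is comparable to $\varepsilon^{1/2}\log(J/\varepsilon)$, not $\varepsilon^{1/2}$. About $2\log_2(J/\varepsilon)$ of its terms equal $\varepsilon^{1/2}$ before the $2^{-|j|/2}$ tail begins. A smaller point: in your Sobolev display the right-hand side should contain $|\mathcal{A}_tf-\mathcal{A}_{2^k}f|$, not $|\mathcal{A}_tf|$.

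\textbf{How to close the gap.} No almost-orthogonality is needed. Decompose the \emph{symbol} over all $j\in\mathbb Z$: write $a=\sum_j a_j$ with $a_j=a\,\psi(2^{-j}\cdot)$, where $\psi$ gives a smooth dyadic partition of unity supported in $\{1/2<|\eta|<2\}$. Then use subadditivity, $\mathcal{A}_*f\le\sum_j\mathcal{A}^{(j)}_*f$. For fixed $\xi$, the following hold:
\begin{itemize}
\item $a_j(t\xi)\neq0$ only for $t$ in a set of $dt/t$-measure $\log 4$;
\item $|a_j|\le\min(\varepsilon,2J2^{-|j|})$;
\item $|\langle\eta,\nabla a_j(\eta)\rangle|\lesssim J$.
\end{itemize}
Your Sobolev-embedding argument, started from $\mathcal{A}^{(j)}_tf\to0$ as $t\to0^+$, then gives $\|\mathcal{A}^{(j)}_*\|_{L^2}^2\lesssim J\min(\varepsilon,J2^{-|j|})$. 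Now sum \emph{norms}, using $\min(\varepsilon,J2^{-|j|})\le(\varepsilon J)^{1/2}2^{-|j|/2}$. This yields $\|\mathcal{A}_*\|_{L^2}\lesssim J^{3/4}\varepsilon^{1/4}$; in fact one gets $\lesssim(\varepsilon J)^{1/2}(1+\log(J/\varepsilon))$. It also makes both the separate dyadic part and the $\delta$-cutoff unnecessary.

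\textbf{Comparison with the paper.} The route above is genuinely different from, and shorter than, the paper's. The paper keeps $a$ intact and splits into the dyadic maximal function plus the short-variation square function. It localizes $f$ instead, through Poisson projections $S_{j+n}$ tied to the scale $2^n$. It then combines the Rademacher--Menshov inequality with an interpolation between two $L^2$ bounds: a decay bound of size $2^{-|j|/2}2^{\ell/2}$ and a gradient bound of size $2^{-\ell(1-\alpha)/2}\|a\|_\infty^{\alpha/2}$. The underlying mechanism is the same in both routes. Scale $t$ is paired with frequencies $|\xi|\sim2^j/t$, so each frequency sees only $O(1)$ of $dt/t$-measure, and this is what neutralizes the non-decaying $b$.
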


 We finish the introduction with a discussion of the required variance and thin shell type bounds. As far as we know such estimates have not been previously used in the study of maximal functions. Let $X$ be a random vector in $\R^d$ distributed according to a log concave probability density $K$ which is isotropic with variance $\sigma^2.$ We call such an $X$ an isotropic log concave random vector. It is clear that  $\mathbb{E}(|X|-\sigma \sqrt{d})^2\le 4\sigma^2 d.$
Over the last twenty years non-trivial improvements of this inequality of the form
\begin{equation}
\label{eq:vb_nontr}
\mathbb{E}(|X|-\sigma\sqrt{d})^2\le  \delta_d^2\sigma^2,
\end{equation}
where $\delta_d=o(\sqrt{d})$ has been a central goal in high-dimensional convex geometry, starting with \cite{Kla0} and culminating with the recent breakthrough result of Klartag and Lehec \cite{KlaLeh}, where an optimal bound is proved. 
\begin{thm}[Variance type bound; restatement of {\cite[Corollary 1.2]{KlaLeh}}]
\label{ts_thm}
Let $X$ be an isotropic log concave random vector of variance $\sigma$. Then there exists a universal constant $C>0$ such that
\begin{equation}\label{eq:thin_shell}
\mathbb{E}(|X|-\sigma\sqrt{d})^2\le C \sigma^2.
\end{equation}
\end{thm}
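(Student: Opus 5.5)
By scaling we reduce to $\sigma=1$. If $X$ is isotropic with variance $\sigma^2$, then $Y=X/\sigma$ is isotropic log-concave with identity covariance, and $\mathbb{E}(|X|-\sigma\sqrt d)^2=\sigma^2\,\mathbb{E}(|Y|-\sqrt d)^2$. So it suffices to prove $\mathbb{E}(|Y|-\sqrt d)^2\le C$ with $C$ universal, for every isotropic log-concave $Y$ with $\mathbb{E}|Y|^2=d$.

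The only deep input is the thin-shell bound of Klartag and Lehec \cite{KlaLeh}. In its variance-of-the-square form it reads
\[
\operatorname{Var}(|Y|^2)=\mathbb{E}\bigl(|Y|^2-d\bigr)^2\le C d .
\]
This follows from their bound on the thin-shell (equivalently, up to universal factors, the KLS-type Poincaré) constant of isotropic log-concave measures, applied to the function $x\mapsto |x|^2$, whose gradient satisfies $\mathbb{E}|\nabla |Y|^2|^2=4\,\mathbb{E}|Y|^2=4d$. This is the step I expect to be the main obstacle. It is precisely the culmination of the series of works starting with \cite{Kla0}, and here it is imported rather than reproved.

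The passage from $|Y|^2$ to $|Y|$ is elementary. Pointwise,
\[
\bigl||Y|-\sqrt d\bigr|=\frac{\bigl||Y|^2-d\bigr|}{|Y|+\sqrt d}\le \frac{\bigl||Y|^2-d\bigr|}{\sqrt d}.
\]
Squaring and taking expectations gives
\[
\mathbb{E}(|Y|-\sqrt d)^2\le \frac{1}{d}\,\mathbb{E}\bigl(|Y|^2-d\bigr)^2\le C .
\]
This yields \eqref{eq:thin_shell} after undoing the scaling.

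Alternatively, if one takes the Klartag–Lehec result in the form $\operatorname{Var}(|Y|)\le C$, write $\mathbb{E}(|Y|-\sqrt d)^2=\operatorname{Var}|Y|+(\sqrt d-\mathbb{E}|Y|)^2$. Since $(\mathbb{E}|Y|)^2=d-\operatorname{Var}|Y|$, we have $0\le \sqrt d-\mathbb{E}|Y| = \operatorname{Var}|Y|/(\sqrt d+\mathbb{E}|Y|)\le C/\sqrt d$. The bias term is therefore negligible and the same conclusion follows.
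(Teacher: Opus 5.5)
Your argument is essentially the paper's: the paper gives no proof of its own and simply imports \cite[Corollary 1.2]{KlaLeh}. Your scaling reduction is correct, and so is the elementary passage to $\mathbb{E}(|Y|-\sqrt d)^2$, whether via $\bigl||Y|-\sqrt d\bigr|\le \bigl||Y|^2-d\bigr|/\sqrt d$ or via the bias computation from $\operatorname{Var}|Y|\le C$.

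One correction to your parenthetical: the bound $\operatorname{Var}(|Y|^2)\le Cd$ must be taken directly from the Klartag--Lehec thin-shell theorem, not derived by applying a Poincar\'e inequality to $|x|^2$. Thin-shell is not known to be equivalent to KLS up to universal constants. Moreover, the best available Poincar\'e bound for isotropic log-concave measures grows like $\log d$, which would only give $\mathbb{E}(|Y|-\sqrt d)^2\lesssim \log d$.
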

\noindent The above variance type bound implies a thin-shell type bound. Namely, with very high probability, $X$ concentrates in a thin spherical shell of radius $\sigma\sqrt{d}$, cf.\ \cite[Corollary 1.3]{KlaLeh}. The variance bound from Theorem \ref{ts_thm} as well as the thin-shell type bound it implies are crucial tools in our proof of Theorem \ref{thm_av_logconc}. These are utilized in the proof of the Fourier transform estimates from Proposition \ref{propo:Linfty_diff}.
\begin{remark}
\label{rem:vb_imp}
For our main goal of proving Theorem \ref{thm_av_logconc} any power type improvement in \eqref{eq:vb_nontr}, of the form $\delta_d=O(d^{1/2-\varepsilon})$, for some $\varepsilon>0$, would suffice. However, better estimates lead to a better control of the constant $\alpha_2,$ and hence, $\alpha_p,$ $p>1.$ In particular, using \eqref{eq:thin_shell} allows one to take any $\alpha_2<1/16$ and $\alpha_p<\min(2/p,2(1-1/p))/16.$ 
\end{remark}

\subsection{Historical context and motivation} 
The study of the high-dimensional behavior of the $L^p(\mathbb{R}^d)$-operator norms of maximal functions associated with symmetric convex bodies originated in the work of Stein \cite{St82}, see also \cite{SS83}, which established dimension-free estimates for the centered Hardy-Littlewood maximal function over balls, $\mathcal{B}_\ast$. These estimates were proven as a consequence of the pointwise bound $\mathcal{B}_\ast f(x)\leq \mathcal{S}_\ast f(x)$ and of the fact that the operator norm on $L^p(\mathbb{R}^d)$ of the spherical maximal function $\mathcal{S}_\ast$ 
is non-increasing with the dimension $d$. The latter fact was established relying on Stein's method of rotations. Later, Bourgain \cite{B86} proved dimension-free estimates on $L^2(\mathbb{R}^d)$ for any maximal function $\mathcal{K}_\ast^G$ associated with a symmetric convex body $G\subseteq \mathbb{R}^d$. Interpolation with the trivial estimate $\|\mathcal{K}_\ast^G\|_{L^\infty(\mathbb{R}^d)}=1$ yields dimension-free estimates for $\mathcal{K}_\ast^G$ in the range $p\in[2,\infty]$. This result has been later extended to the range $p\in (3/2,\infty]$ independently by Bourgain \cite{B86Lp} and Carbery \cite{Car1}.  An additional proof %of this result 
can be found in the survey article \cite{BMSW21}. Similar arguments give dimension-free estimates in the range $p\in (3/2,\infty]$ for any maximal function associated with a symmetric log-concave probability density \cite{DGM18}.  Next, Müller \cite{Mul1} proved the full range $p\in (1,\infty]$ of dimension-free  $L^p(\R^d)$  estimates for maximal functions associated with symmetric convex bodies satisfying a certain geometric constraint encompassing, in particular, $\ell^q$ balls for $q\in [1,\infty)$. The most recent development concerning maximal functions associated with symmetric convex bodies in $\mathbb{R}^d$ is due to Bourgain \cite{B14}, who established dimension-free estimates for the maximal function over cubes, $\mathcal{Q}_\ast$, in the range $p\in (1,\infty]$.  A major open problem in this field is the question whether for any $p>1$ and any symmetric convex body $G$ there exists a uniform dimension-free constant $C_p$ depending only on $p$ and such that $\|\mathcal{K}_\ast^G\|_{L^p(\mathbb{R}^d)}\le C_p$.

Beyond dimension-free estimates, an even more challenging question concerns the precise value of the operator norms on $L^p(\mathbb{R}^d)$ for the above-mentioned maximal functions. While it is clear that the operator norm on $L^\infty(\mathbb{R}^d)$ of any maximal function  
associated with a normalized averaging operator is identically equal to one, to the best of our knowledge
the exact value of the operator norm on $L^p(\mathbb{R}^d)$, $p\in (1,\infty),$ remains unknown for any maximal function associated with a log-concave density in every dimension $d\geq 1$. Likewise, the precise value of $\|\mathcal{S}_\ast\|_{L^p(\mathbb{R}^d)}$ is currently unknown for any $d\geq2,\, p>\frac{d}{d-1}$.

\subsubsection{The Hardy-Littlewood maximal function $\mathcal{B}_\ast$}
%However, 
For the Hardy-Littlewood maximal functions over balls, $\mathcal{B}_\ast$, more information is available in the literature, especially when $d=1$. The best constant in the weak-type $(1,1)$ inequality for $\mB_*$ was established by Melas \cite{Me03} who showed that 
$$\Vert \mB_* \Vert_{L^1(\mathbb{R})\rightarrow L^{1,\infty}(\mathbb{R})}= \frac{11+\sqrt{61}}{12}~.$$ Moreover, Grafakos, Montgomery-Smith, and Motrunich \cite{GM-SM1} provided an explicit formula for the best constant $c_p,$ $p\in (1,\infty)$ in
\[
\|\mB_* f\|_{L^p(\R)}\le c_p \|f\|_{L^p(\R)},
\]
 when the input function $f$ is positive and convex except at one point. Additionally, the norm of the uncentered Hardy-Littlewood maximal operator, $\mB_\ast^u$, in dimension $d=1$ was obtained by Grafakos and Montgomery-Smith \cite{GM-S1}
  and it satisfies $\frac{p}{p-1}\leq \| \mB_\ast^u\|_{L^p(\mathbb{R})}\leq \frac{2p}{p-1}$.

Since obtaining explicit values for $\|\mB_*\|_{L^p(\R^d)}$ seems a difficult task, one may instead try to obtain estimates from above or below. We first discuss explicit estimates from above. In dimension one, the aforementioned result of Grafakos and Montgomery-Smith \cite{GM-S1} for the uncentered Hardy-Littlewood maximal operator combined with the pointwise estimate $ \mB_\ast f(x)\leq \mB_\ast^uf(x)$ gives
$$ \|\mB_\ast\|_{L^p(\mathbb{R})}\leq 2\frac{p}{p-1}~$$
for all $p>1$. Explicit upper bounds in arbitrary dimension have been already established by Stein and Strömberg \cite{SS83}, who showed that there exists a universal constant $C>0$ such that
\begin{equation}
\label{eq: mB StStr}
\|\mB_*\|_{L^p(\R^d)}\leq C\sqrt{d}\frac{p}{p-1}~,
\end{equation}
for all dimension $d$ and all $1<p\leq\infty$. 
In \cite{AC94}, Auscher and Carro have provided the following dimension-free improvement valid for $p\geq 2$ and  $d\ge 2$
\begin{equation}
\label{eq: mB AC}
\|\mB_*\|_{L^p(\R^d)}\leq (2+\sqrt{2})^{2/p}~.
\end{equation}
 Furthermore,
from Theorem 1.2, case $k=2$ in \cite{KWZ}, one can deduce that
\begin{equation}
\label{eq: mB KWZ}
\|\mB_*\|_{L^p(\R^d)}\leq C\left(\frac{p}{p-1}\right)^{3}\|f\|_{L^p(\R^d)}~,
\end{equation}
where $C>1$ is a universal constant. This is because for $k=2$ the factorization operator $M_k^t$ from \cite{KWZ} coincides with $\mB_t,$ see e.g.\ \cite[p.\ 427]{Ver1}.
When the input function $f$ is radially decreasing Aldaz and P\'erez-L\'azaro \cite[Corollary 2.8]{APL} established that
\begin{equation}
\label{eq: mB APL raddec}
\|\sup_{t>0}|\mB_t f|\|_{L^p(\R^d)}\leq 2\left(\frac{p}{p-1}\right)^{1/p}\|f\|_{L^p(\R^d)}~,
\end{equation}
for all $p\in (1,\infty].$ Moreover, from Men\'arguez and Soria \cite[Theorem 3]{MS1} it follows that \eqref{eq: mB APL raddec} remains true for general radial functions $f$ at the price of increasing the upfront constant from $2$ to $8.$ We note that the constants on the right-hand sides of \eqref{eq: mB StStr}, \eqref{eq: mB AC},  \eqref{eq: mB KWZ}, and \eqref{eq: mB APL raddec} above are strictly larger than $p/(p-1).$  

We observe in passing that when it comes to the maximal spherical averages one may establish the following variant of \eqref{eq: mB AC} for $d\ge 3$ and $p\ge 2$
\begin{equation*}
\|\mS_*\|_{L^p(\R^d)}\leq C^{1/p},~
\end{equation*}
where $C>0$ is a universal constant.

Explicit estimates from below are even sparser.
In dimension one, the aforementioned result of Grafakos and Montgomery-Smith \cite{GM-S1} for the uncenterd Hardy-Littlewood maximal operator combined with the pointwise estimate $\frac{1}{2}\mB_\ast^uf(x)\leq \mB_\ast f(x)$ gives
$$\max \left\lbrace 1, \frac{1}{2}\frac{p}{(p-1)}\right\rbrace \leq \|\mB_\ast\|_{L^p(\mathbb{R})}~.$$
In \cite[Remark 2.9]{APL} Aldaz and P\'erez L\'azaro proved that for all $d\geq 1$ and $p>1$
\begin{equation}
\label{eq: mB bel APL}
\|\mB_{*}\|_{L^p(\R^d)}\ge \left(1+\frac{1}{2^{dp}(p-1)}\right)^{1/p}.
\end{equation}
Furthermore in \cite[Proposition]{CG1} and \cite[Theorem~3.2]{DSS} it has been shown that, for all $p\in (1,\infty)$ it holds
\begin{equation*}
\|\mB_{*}\|_{L^p(\R^d)}\ge b_{p,d},
\end{equation*}
where $b_{p,d}:=\mB_*(|x|^{-d/p})(1,0,\ldots,0)$.

\subsubsection{The Gaussian maximal function $\mathcal{G}_\ast$}
Information about the behavior of $\|\mathcal{G}_\ast \|_{L^p(\mathbb{R}^d)}$ is provided by the theory of semigroups. In particular, it is well known that 
\begin{equation}
\label{eq: mG p/(p-1)}
\|\mG_*\|_{L^p(\R^d)}\le \frac{p}{p-1},\qquad p\in (1,\infty].
\end{equation}
What lies at the core of this inequality is the fact that $\mG_t$ is a symmetric-diffusion semigroup. Then one may apply Rota's dilation theorem together with Doob's martingale maximal inequality to conlcude \eqref{eq: mG p/(p-1)}, see e.g.\ \cite[Chapter IV, Section 4, p.\ 106]{St_topics}. We refer the reader to \cite[Equation~1.20.G*]{DGM18} for a sketch of different proof of \eqref{eq: mG p/(p-1)}. While the constant $p/(p-1)$ is optimal for Doob's maximal inequality it is unclear to us if $\|\mG_*\|_{L^p(\R^d)}=p/(p-1).$ 
Moreover, we do not know if \eqref{eq: mG p/(p-1)} holds for $\|\mB_*\|_{L^p(\R^d)}.$ 

\subsubsection{Maximal function over cubes} For the maximal function over cubes $\mathcal{Q}_\ast$, it follows from the work of Bourgain \cite{B14} that for any $p\in(1,\infty]$ there exists $C_p>0$ such that for all $d\geq 1$ it holds that
$$\|\mathcal{Q}_\ast\|_{L^p(\mathbb{R}^d)}\leq C_p~.$$
However, Bourgain's method provides essentially no estimate on the size of $C_p.$ On the other hand, the result of Aldaz \cite{Acube} established that the constant in the weak-type $(1,1)$ inequality for $Q_\ast$ tends to infinity with the dimension.
Such a constant has been made more explicit by Aubrun \cite{AuCube}, who showed that it behaves like 
$\sim_\varepsilon (\log d)^{1-\varepsilon}$ for every $\varepsilon>0$, and later by Iakolev and Strömberg \cite{IaSt}, who showed that it behaves like $\sim d^{1/4}$. 

Clearly, when $d=1$ we have $\|\mathcal{Q}_\ast\|_{L^p(\mathbb{R}^d)}=\|\mathcal{B}_\ast\|_{L^p(\mathbb{R}^d)}$ as the maximal functions coincide. When $d>1$, $\mathcal{B}_\ast$ and $\mathcal{Q}_\ast$ can be related via
$$\frac{2^d}{d^{d/2}} \frac{1}{\nu_d}\leq \frac{\mathcal{B}_\ast f}{\mathcal{Q}_\ast f} \leq \frac{2^d}{\nu_d}~, $$
 where $\nu_d$ is the volume of the unit ball in $\mathbb{R}^d$, see \cite[Exercise~2.1.3]{Gr14}. In particular, when $d>1$, it is not clear whether $\|\mathcal{Q}_\ast\|_{L^p(\mathbb{R}^d)} < \|\mathcal{B}_\ast\|_{L^p(\mathbb{R}^d)}$,  or $\|\mathcal{Q}_\ast\|_{L^p(\mathbb{R}^d)}=\|\mathcal{B}_\ast\|_{L^p(\mathbb{R}^d)}$, or $\|\mathcal{Q}_\ast\|_{L^p(\mathbb{R}^d)}>\|\mathcal{B}_\ast\|_{L^p(\mathbb{R}^d)}$.

 \subsubsection{Variance and thin-shell type bounds in high-dimensional convex geometry}
We briefly provide some context for the variance type bound \eqref{eq:thin_shell}. In the case of isotropic random vectors with a radial log-concave density, the variance type bound \eqref{eq:thin_shell} can be proven more directly by an application of the Laplace method. However, the case of general isotropic log-concave random vectors is significantly more challenging and has profound implications. A first highly non-trivial bound in the general case was proved in \cite{Kla0}, showing that \eqref{eq:vb_nontr} holds with $\delta_d \le C \sqrt{d}/\log d$. Such a bound was sufficient to establish Klartag's central limit theorem for convex sets \cite{Kla0}. The importance of \eqref{eq:thin_shell} and the related thin-shell conjecture also stems from the fact that the latter implies a positive resolution of the Bourgain slicing problem. This problem was solved by Klartag and Lehec in \cite{KlaLehSP}. Their proof relies on a bound of Guan \cite{Guan}, which was  obtained in the course of proving that $\delta_d\le C \log \log d$.
Indeed, a remarkable sequence of developments followed the seminal work \cite{Kla0}, leading both to successive improvements in the bound for $\delta_d$ and to the introduction of novel tools. For a detailed account of these developments, we refer the reader to the introduction of \cite{KlaLeh} and references therein. This line of work culminated in the recent resolution of the thin-shell conjecture by Klartag and Lehec in \cite{KlaLeh}, which establishes \eqref{eq:thin_shell}.

Somewhat surprisingly, these geometric developments have remained largely separate from the study of high dimensional maximal functions.
 We view our paper as a first attempt to bridge this gap.

\subsection{Structure of the paper and our methods} 

 Section \ref{sec: general_thm} is devoted to the proof of Theorem \ref{general_thm}. The proof is a variation of the analysis in \cite[Section~4]{BMSW21} and the reasoning is split into considering the dyadic maximal function and an $\ell^2$ sum of maximal functions over dyadic intervals. Compared to \cite{BMSW21}, our  variant of the proof has the crucial advantage that it allows us to control the operator norm of a maximal function by the operator norm of the associated Fourier multiplier operator. Results with similar reasoning appeared previously in the literature, among others in \cite[Lemma~3]{B86}, \cite[Proposition i) p.\  271]{Car1}, \cite[Theorem A]{DRdF1}, and \cite[Section 3, Corollary]{RdF1}. It is likely that using these methods one can prove a variant of Theorem \ref{general_thm}. However, we found the current formulation and proof of Theorem \ref{general_thm} most convenient for our goals. 

 In Section \ref{sec:pf_thm_av_logconc} we prove Theorem \ref{thm_av_logconc}. We apply Theorem \ref{general_thm} to the difference operators $\mA^K_t-(\mG^{\sigma})_t.$ The crucial point is to prove that the multiplier symbols $a$ and $g_{\sigma},$ corresponding to $\mA^K_1$ and $\mG_{\sigma},$ respectively, satisfy \[\|a-g_{\sigma}\|_{L^{\infty}(\R^d)}\le C d^{-\delta},\] for some universal constants $\delta>0$ and $C>0.$ This is the content of Proposition \ref{propo:Linfty_diff}. Here we use the variance bound for general log concave probability densities from Theorem \ref{ts_thm} together with a thin shell bound for Gaussian random variables from   \cite[Lemma 1]{LaMa} (or from \cite[Corollary 1.3]{KlaLeh}). The assumptions of Theorem \ref{general_thm} are verified in Lemma \ref{lemma:hyp_gen_thm}, which is a straightfoward conseqeunce of \cite[Lemma 5.10]{DGM18}. We finish Section \ref{sec:pf_thm_av_logconc} with a remark about the necessity of the averaging procedure in Theorem \ref{thm_av_logconc}, see Remark \ref{rem:fail_non_rad}. 

 Next, in Section \ref{sec:rad_loconc,extremal}, using Theorem \ref{thm_av_logconc} we deduce Theorem \ref{thm_rad_logconc} and Theorem \ref{thm:extremal}.

Section \ref{sec:sphermax} contains the proof of our result for the spherical maximal function - Theorem \ref{thm:gaussian_sphere}. The core of the proof is again a pointwise estimate for the difference of the multiplier symbols coresponding to the spherical averages and the Gaussian averages $\mG_{1/\sqrt{d}}.$ This is the content of Proposition \ref{propo:pointwise_gauss_sphere}. Having this proposition we may apply Theorem \ref{general_thm}. Its assumptions are verified in Proposition \ref{propo_estimate_for_mu} in which we need auxiliary lemmas for Bessel functions from the Appendix. We finish Section \ref{sec:sphermax} by highlighting implications of Theorems \ref{thm_rad_logconc} and \ref{thm:gaussian_sphere} for a family of maximal operators connecting $\mB_\ast$ with $\mS_\ast$ considered by Dosidis and Grafakos in \cite{DG21}.

In Section \ref{sec:lowerBound} we prove Theorem \ref{propo:GausMonotonicity}. Namely, we rely on the tensor product structure of the Gaussian kernel to show that the operator norm $\|\mG_\ast\|_{L^p(\mathbb{R}^d)}$ is non-decreasing with the dimension. Then, we compute a lower bound for $\|\mG_\ast^1\|_{L^p(\mathbb{R})}$ (and, consequently, for $\|\mG_\ast^d\|_{L^p(\mathbb{R}^d)}$) by testing $\mG_\ast^1$ on a Gaussian function. 

Additionally, in Section \ref{sec:ConcludingRemarks} we discuss the norms on $L^p(\R^d)$ of maximal functions associated with radial log-concave densities applied to three natural radial inputs: a homogenous function, the characterstic function of the ball, and a Gaussian function. From Theorems \ref{thm_rad_logconc} and \ref{thm:gaussian_sphere} it follows that we may choose operators for testing among $\mS_\ast,\,\mathcal{G}_\ast,\,\mathcal{B}_\ast.$ We observe that, as $d\to \infty,$ all of these examples give the trivial conclusion $\lm(p)\ge 1$, leading to the following question.
 \begin{Que}
	\label{que: limrad}
    Fix $p\in (1,\infty)$ and let
$\|\mG_*\|_{L^p_{rad}(\R^d)},$ $\|\mB_*\|_{L^p_{rad}(\R^d)},$ $\|\mS_*\|_{L^p_{rad}(\R^d)},$ be the operator norms of the Gaussian, ball, and spherical maximal functions on $L^p(\R^d)$ restricted to radial functions. Similarly, let $\|\mK_*\|_{L^p_{rad}(\R^d)}$ be the operator norm of a maximal function $\mK_\ast$ associated with a radial log-concave density $K\in\mathcal{L}_d^{rad},$ when restricted to radial functions.
Is it true that
\begin{equation}
 \label{eq: limrad 1}
    \lim_{d\to\infty}   \|\mG_*\|_{L^p_{rad}(\R^d)} =\lim_{d\to\infty}   \|\mB_*\|_{L^p_{rad}(\R^d)}=\lim_{d\to\infty}   \|\mS_*\|_{L^p_{rad}(\R^d)}=
     \lim_{d\to\infty} \sup_{K\in\mathcal{L}_d^{rad}}\|\mK_*\|_{L^p_{rad}(\R^d)} =
    1\quad  ?
    \end{equation}		
\end{Que}

Finally, in Appendix \ref{sec: auxlem} we record some useful identities and estimates involving Bessel functions that are needed in Section \ref{sec:sphermax}.

\subsection{Notation}
\begin{enumerate}
\item Throughout the paper the letter $d\in \N$ is reserved for the dimension and all inexplicit constants will be
independent of $d$.  
\item For two nonnegative quantities $X, Y$
we write $X \lesssim Y$ if there is an absolute universal constant
such that $X\le CY$. In particular, if the quantities $X$ and $Y$ involve the dimension $d,$ or an additional parameter such as $\xi\in \R^d,$ then the universal constant $C$ in the estimate  $X\le CY$ is independent of these parameters. For instance, for a function $a\colon\R^d\to \C$
the estimate $|a(\xi)|\lesssim 1$ means that there is a universal constant $C>0$ such that $|a(\xi)|\le C$ for all dimensions $d$ and all $\xi \in \R^d.$
 Similarly, all the constants and implicit constants in $\lesssim$ are uniform over symmetric log-concave probability densities.

\item The Fourier transform of a function $f\in L^1(\mathbb{R}^d)$ is defined by the formula
$$\widehat{f}(\xi)=\mF(f)(\xi)=\int_{\mathbb{R}^d} f(x)e^{-2\pi i x\cdot\xi} \dd x,\qquad \xi\in\R^d~.$$
\item We denote by $P_t$ the Poisson semigroup defined for $f\in L^2$ by
\begin{align}\label{defi_Poisson_semigr}
\widehat{P_tf}(\xi)=p_t(\xi)\widehat{f}(\xi)~, \qquad  p_t(\xi)=e^{-2\pi t \tfrac{|\xi|}{\sqrt{d}}}~.
\end{align}
\item 
We will also need the resolution of the identity given by the Poisson projections
\begin{align}\label{defi_Poisson_projection}
f=\sum_{n\in\mathbb{Z}}S_nf~, \quad f\in L^2(\mathbb{R}^d)~, \qquad \text{where } \; S_n=P_{2^{n-1}}-P_{2^n}~.
\end{align}
\item 
We let $J_\nu,$ $\nu>0,$ be the Bessel function of the first kind of order $\nu.$

\item We denote by $G_\sigma$ the isotropic Gaussian kernel with variance $\sigma^2$,
$$G_\sigma(x)= \frac{1}{(2\pi\sigma^2)^{d/2}}e^{-\tfrac{|x|^2}{2\sigma^2}}~, $$ 
and by lowercase $g_\sigma$ its Fourier transform,
$$g_\sigma(\xi)=\widehat{G}_\sigma(\xi)= e^{- 2\pi \sigma^2|\xi|^2}~.$$
In particular, $g_{1/\sqrt{d}}(\xi)= e^{-\tfrac{2\pi^2|\xi|^2}{d}}$.

\item For a kernel $A\in L^1(\mathbb{R}^d)$, we denote by $A_t$ the rescaling $A_t(x)=t^{-d}A(t^{-1}x)$. We write $\mathcal{A}$ for the convolution operator associated with $A$, namely $\mathcal{A}f=A\ast f$. Similarly, we denote by $\mathcal{A}_t$ the convolution operator $\mathcal{A}_tf=A_t\ast f$. The maximal function associated with the kernel $A$ is defined by $\mathcal{A}_\ast f(x)=\sup_{t>0}|A_t\ast f(x)|$. We use the lowercase letter $a$ to denote the Fourier transform of $A$, $a=\widehat{A}$. Thus, $a(t\cdot)$ is the Fourier multiplier associated with $\mathcal{A}_t$, that is
$$\mathcal{F}(\mathcal{A}_tf)(\xi)=a(t\xi)\widehat{f}(\xi)~.$$

\end{enumerate}

\section{Transference principle - Proof of Theorem \ref{general_thm}}
Since both the assumptions \eqref{eq_general_thm_pointwise_estimate_multiplier_1}, \eqref{eq_general_thm_pointwise_estimate_multiplier_gradient} and the conlusion \eqref{eq_general_thm_pointwise_estimate_multiplier_conclusion} are invariant under scaling $\xi \to \sigma^{-1}\xi$ it suffices to establish the theorem for a single $\sigma>0.$ Throughout the proof we assume that $\sigma=\frac{1}{\sqrt{d}}.$

\label{sec: general_thm}
We start by proving the following dyadic variant of Theorem \ref{general_thm}. 
\begin{proposition}\label{general_thm_dyadic}
    Let  $a=a_d$ be a bounded function on $\mathbb{R}^d$, $d\geq 1$. Assume that there exists a constant $J$ such that $a=a_d$ satisfies pointwise a.e. the inequalities in \eqref{eq_general_thm_pointwise_estimate_multiplier_1} with $\sigma=\frac{1}{\sqrt{d}}$.
    Consider the family of Fourier multiplier operators $\lbrace \Fm_t\rbrace_{t>0}$ given by
    $$\widehat{\Fm_t f}(\xi)=a(t\xi)\widehat{f}(\xi)~.$$
    Then for all $f\in L^2(\R^d)$ we have
    \begin{align*}
        \Vert \sup_{n\in\mathbb{Z}} |\Fm_{2^n}f| \Vert_{L^2(\mathbb{R}^d)} \leq 2J^{3/4} \Vert a \Vert_{L^\infty(\mathbb{R}^d)}^{1/4}\Vert f \Vert_{L^2(\mathbb{R}^d)}.
    \end{align*}
\end{proposition}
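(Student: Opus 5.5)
The plan is to dominate the dyadic supremum by a square function, using the Poisson resolution of identity \eqref{defi_Poisson_projection}. By density it suffices to take $f$ in a dense subspace of $L^2(\R^d)$. We write
\[
\Fm_{2^n} f=\sum_{j\in\mathbb{Z}} \Fm_{2^n} S_{n+j} f ,
\]
and then bound
\[
\sup_{n}|\Fm_{2^n} f|\le \sum_{j\in\mathbb{Z}}\sup_n |\Fm_{2^n}S_{n+j}f|\le \sum_{j\in\mathbb{Z}}\Big(\sum_n |\Fm_{2^n}S_{n+j}f|^2\Big)^{1/2}.
\]
Plancherel then gives
\[
\Big\|\Big(\sum_n |\Fm_{2^n}S_{n+j}f|^2\Big)^{1/2}\Big\|_{L^2}^2=\int \sum_n |a(2^n\xi)|^2 |s(2^{n+j}\xi)|^2|\widehat f(\xi)|^2\,\dd\xi ,
\]
where $s(\eta)=e^{-\pi|\eta|/\sqrt d}-e^{-2\pi|\eta|/\sqrt d}$ is the symbol of $S_0$, up to the normalization of $p_t$. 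It therefore suffices to prove, for each $j$,
\[
\sup_\xi \sum_n |a(2^n\xi)|^2|s(2^{n+j}\xi)|^2\le c_j^2,
\qquad \sum_j c_j\le 2J^{3/4}\|a\|_\infty^{1/4}.
\]

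To estimate $c_j$, set $u=2^n|\xi|/\sqrt d$, so that the symbols depend only on $u$ after using \eqref{eq_general_thm_pointwise_estimate_multiplier_1} with $\sigma=1/\sqrt d$. We have $|a|\le \min(\|a\|_\infty, Ju, J/u)$. For $s$ we have $|s|\le \min(\pi 2^j u,\ e^{-\pi 2^j u})$, where the first bound comes from the mean value theorem and the second from crude domination.

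The main obstacle is that a direct bound of each piece by $\|a\|_\infty$ loses the decay in $j$. To handle it, interpolate each summand as
\[
|a|^2|s|^2\le \|a\|_\infty^{1/2}\,|a|^{3/2}|s|^2 .
\]
Then use $|a|^{3/2}\le J^{3/2}\min(u,1/u)^{3/2}$ together with the bounds on $s$, and sum over the lacunary sequence $u=2^n|\xi|/\sqrt d$.

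For $j\ge0$, the factor $s(2^{n+j}\xi)$ is small unless $2^{n+j}u\lesssim$ small, or else it is exponentially small. In the relevant range $u\sim 2^{-j}$, the factor $\min(u,1/u)^{3/2}\sim 2^{-3j/2}$ gives geometric decay. For $j<0$, the bound $|s|\le \pi2^j u$ gives a factor $2^{2j}u^2$, and $|a|^{3/2}\le J^{3/2}u^{-3/2}$ controls large $u$. The lacunary sums are then geometric, yielding $c_j^2\lesssim J^{3/2}\|a\|_\infty^{1/2}2^{-\epsilon|j|}$.

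Summing $c_j\lesssim J^{3/4}\|a\|_\infty^{1/4}2^{-\epsilon|j|/2}$ over $j$ gives the claim. Tracking constants carefully, or adjusting the exponent split, should produce the stated factor $2$. I expect balancing the exponents so that both tails converge while keeping the constant at $2$ to be the only delicate bookkeeping.
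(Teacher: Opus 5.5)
Your estimates are correct as far as they go. With $|s|\le\min(\pi2^ju,e^{-\pi2^ju})$ and $|a|^{3/2}\le J^{3/2}\min(u,1/u)^{3/2}$, the lacunary sums do give $c_j^2\lesssim J^{3/2}\|a\|_{L^\infty}^{1/2}2^{-3|j|/2}$. (For $j<0$ and $u\gtrsim 2^{-j}$ you must switch to the exponential bound on $s$, since $2^{2j}u^2\cdot u^{-3/2}$ grows.) But this only yields $\|\sup_n|\Fm_{2^n}f|\|_{L^2}\le C J^{3/4}\|a\|_{L^\infty}^{1/4}\|f\|_{L^2}$ with an unspecified $C$. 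The proposition asserts $C=2$, and your closing sentence does not bridge this.

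The triangle inequality over $j$ turns one square-function estimate into $\sum_j c_j$. The $c_j$ decay only like $2^{-3|j|/4}$ and are not small for $-5\le j\le 0$. With the bounds on $s$ that you list, the sum comes out well above $2$: the crude bound $\min(\pi x,e^{-\pi x})$ peaks near $0.57$, while $\sup s=1/4$. Even with the exact symbol, a rough evaluation puts $\sum_j c_j$ close to $2$. So reaching the stated constant along your route would require a genuine numerical computation of the suprema over $u$ for each $j$, not bookkeeping.

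The missing observation is that the obstacle you identify is self-inflicted: the dyadic maximal function needs no frequency localization at all. The minimum is at most $1$, and the two lacunary geometric series on either side of $2^n|\xi|/\sqrt d=1$ each sum to at most $2$. Hence
\[
\sum_{n\in\mathbb{Z}}|a(2^n\xi)|^2\le \|a\|_{L^\infty}^{1/2}J^{3/2}\sum_{n\in\mathbb{Z}}\min\Big\{\frac{2^n|\xi|}{\sqrt d},\frac{\sqrt d}{2^n|\xi|}\Big\}^{3/2}\le 4J^{3/2}\|a\|_{L^\infty}^{1/2}
\]
uniformly in $\xi$. Then $\sup_n|\Fm_{2^n}f|\le(\sum_n|\Fm_{2^n}f|^2)^{1/2}$ and Plancherel give the bound with constant exactly $\sqrt4=2$; this is the paper's proof.

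The Poisson projections $S_{n+j}$ are needed only for the short-variation term in \eqref{decomposition_maximal_operator}, where the factor $2^{-|j|/2}$ has to compensate the loss $2^{\ell/2}$. In the dyadic part they gain nothing and cost you the constant. For the application to Theorem~\ref{general_thm} any constant depending only on $J$ would do, so the defect concerns only the proposition as literally stated.
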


\proof Using Plancherel's identity, estimates in  \eqref{eq_general_thm_pointwise_estimate_multiplier_1}, and the following pointwise uniform estimate
$$\sum_{n\in\mathbb{Z}}\bigg(\min \bigg\lbrace \frac{2^n|\xi|}{\sqrt{d}}, \frac{\sqrt{d}}{2^n|\xi|}\bigg\rbrace\bigg)^{3/2}\le \sum_{n\in\mathbb{Z}}\min \bigg\lbrace \frac{2^n|\xi|}{\sqrt{d}}, \frac{\sqrt{d}}{2^n|\xi|}\bigg\rbrace  \leq 4~,$$
we have that
\begin{equation*}\begin{split}
    \bigg\Vert \sup_{n\in\mathbb{Z}} |\, \Fm_{2^n}f|\, \bigg\Vert_{L^2(\mathbb{R}^d)}^2 & \leq \sum_{n\in\mathbb{Z}} \Vert \Fm_{2^n}f\Vert_{L^2}^2 \\
    & = \sum_{n\in\mathbb{Z}} \int_{\mathbb{R}^d} |a(2^n\xi)|^2|\widehat{f}(\xi)|^2 \dd\xi \\
    & \leq  J^{3/2} \Vert a \Vert_{L^\infty(\mathbb{R}^d)}^{1/2} \sum_{n\in\mathbb{Z}} \int_{\mathbb{R}^d} \bigg(\min \bigg\lbrace \frac{2^n|\xi|}{\sqrt{d}}, \frac{\sqrt{d}}{2^n|\xi|}\bigg\rbrace\bigg)^{3/2}|\widehat{f}(\xi)|^2 \dd\xi \\
    & \leq 4 J^{3/2} \Vert a \Vert_{L^\infty(\mathbb{R}^d)}^{1/2} \Vert f \Vert_{L^2(\mathbb{R}^d)}^2~.
    \end{split}
\end{equation*}
\qed

To treat the non-dyadic version of the maximal operator we rely on the following decomposition
\begin{align}\label{decomposition_maximal_operator}
    \sup_{t>0} |\Fm_tf| \leq \sup_{n\in\mathbb{Z}} |\Fm_{2^n}f| + \bigg( \sum_{n\in\mathbb{Z}} \sup_{t\in [2^n,2^{n+1}]}|\Fm_t f- \Fm_{2^n}f|^2\bigg)^{1/2}~.
\end{align}
In view of Proposition \ref{general_thm_dyadic}, to complete the proof of Theorem \ref{general_thm}  it remains to estimate the second term in the right-hand-side of \eqref{decomposition_maximal_operator}.
To this end, we rely on the following Rademacher-Menshov type numerical inequality which can be found, for example, in \cite[Lemma~2.5]{MSZK20} and which holds for all $n\in\mathbb{Z}$ and for any continuous function $h:[2^{n},2^{n+1}]\rightarrow\mathbb{C}$
$$\sup_{t\in [2^n,2^{n+1}]}|h(t)-h(2^n)|\leq \sqrt{2} \sum_{\ell\in\mathbb{N}_0} \bigg( \sum_{m=0}^{2^\ell-1} |h(2^n+2^{n-\ell}(m+1))-h(2^n+2^{n-\ell}m)|^2 \bigg)^{1/2}~.$$
In view of this, and appealing to the decomposition $f=\sum_{j\in\mathbb{Z}}S_j f$ defined in \eqref{defi_Poisson_projection},  we have 
\begin{align*}
    \bigg\Vert \bigg( & \sum_{n\in\mathbb{Z}} \sup_{t\in [2^n,2^{n+1}]}|\Fm_tf  - \Fm_{2^n}f|^2\bigg)^{1/2} \bigg\Vert_{L^2(\mathbb{R}^d)} \\ 
    & \leq \sqrt{2} \sum_{\ell \geq 0} \sum_{j\in\mathbb{Z}} 
    \bigg\Vert \bigg( \sum_{n\in\mathbb{Z}} \sum_{m=0}^{2^\ell -1}\big|(\Fm_{2^n+2^{n-\ell}(m+1)}  - \Fm_{2^n+2^{n-\ell}m})S_{j+n}f \big|^2\bigg)^{1/2} \bigg\Vert_{L^2(\mathbb{R}^d)}~.
\end{align*}
We proceed by studying the norm on the right-hand-side of the last display, namely the quantity
\begin{align}\label{last_intermediate_estimate_thm_proof}
    \bigg\Vert \bigg( \sum_{n\in\mathbb{Z}} \sum_{m=0}^{2^\ell -1}  \big|(\Fm_{2^n+2^{n-\ell}(m+1)} & -  \Fm_{2^n+2^{n-\ell}m})S_{j+n}f \big|^2\bigg)^{1/2} \bigg\Vert_{L^2(\mathbb{R}^d)} ~.
\end{align}
We will obtain two estimates for \eqref{last_intermediate_estimate_thm_proof} and then we will interpolate between them. To derive the first estimate we need some preliminary bounds. First, in view of \eqref{eq_general_thm_pointwise_estimate_multiplier_1} (recall that $\sigma=1/\sqrt{d}$), we have the bound
\begin{align*}
    |a((2^n + 2^{n-\ell}(m+1))\xi)-a((2^n+2^{n-\ell}m)\xi)|\leq 4 J\min \bigg\lbrace \frac{2^n|\xi|}{\sqrt{d}}, \frac{\sqrt{d}}{2^n|\xi|} \bigg\rbrace~.
\end{align*}
Moreover, in view of the definition of Poisson semigroup in \eqref{defi_Poisson_semigr} we have 
\begin{align*}
    |(e^{-2\pi 2^{n+j}|\xi|/\sqrt{d}}-e^{-2\pi 2^{n+j-1}|\xi|/\sqrt{d}})|\leq 8\pi \min \bigg\lbrace \frac{2^{n+j}|\xi|}{\sqrt{d}}, \frac{\sqrt{d}}{2^{n+j}|\xi|} \bigg\rbrace~.
\end{align*}
Combining the above two estimates we obtain that their product is bounded by $32\pi J2^{-|j|}$.
Using this fact together with Plancherel's identity and the estimates in \eqref{eq_general_thm_pointwise_estimate_multiplier_1} we obtain
\begin{align}\begin{split}\label{estimate_in_proof_last_intermediate_1}
    \bigg\Vert & \bigg( \sum_{n\in\mathbb{Z}} \sum_{m=0}^{2^\ell -1}  \big|(\Fm_{2^n+2^{n-\ell}(m+1)} - \Fm_{2^n+2^{n-\ell}m})S_{j+n}f \big|^2\bigg)^{1/2} \bigg\Vert_{L^2(\mathbb{R}^d)} \\
    & = \bigg( \sum_{n\in\mathbb{Z}} \sum_{m=0}^{2^\ell -1} \int_{\mathbb{R}^d} |a((2^n + 2^{n-\ell}(m+1))\xi)-a((2^n+2^{n-\ell}m)\xi)|^2 |(e^{-2\pi 2^{n+j}L|\xi|}-e^{-2\pi 2^{n+j-1}L|\xi|})|^2|\widehat{f}(\xi)|^2 \dd\xi \bigg)^{1/2}\\
    & \leq 8\sqrt{\pi}\sqrt{J} 2^{-|j|/2} \bigg( \sum_{n\in\mathbb{Z}} \sum_{m=0}^{2^\ell -1} \int_{\mathbb{R}^d} |a((2^n + 2^{n-\ell}(m+1))\xi)-a((2^n+2^{n-\ell}m)\xi)||\widehat{f}(\xi)|^2 \dd\xi \bigg)^{1/2}\\
    & \leq 16 \sqrt{\pi}J2^{-|j|/2} 2^{\ell/2} \bigg(  \int_{\mathbb{R}^d} \sum_{n\in\mathbb{Z}} \min \bigg\lbrace \frac{2^n|\xi|}{\sqrt{d}}, \frac{\sqrt{d}}{2^n|\xi|} \bigg\rbrace |\widehat{f}(\xi)|^2 \dd\xi \bigg)^{1/2}\\
    & \leq 32 \sqrt{\pi}J2^{-|j|/2} 2^{\ell/2}\Vert f \Vert_{L^2(\mathbb{R}^d)}~.
    \end{split}
\end{align}
To obtain the second estimate for \eqref{last_intermediate_estimate_thm_proof} we rely on the following inequality which follows from \eqref{eq_general_thm_pointwise_estimate_multiplier_gradient}  
\begin{align}\label{estimate_using_pointwise_gradient}
    |a((2^n + 2^{n-\ell}(m+1))\xi)-a((2^n+2^{n-\ell}m)\xi)| \leq \int_{2^n+2^{n-\ell}m}^{2^n+2^{n-\ell}(m+1)} |\langle t\xi, \nabla a(t\xi)\rangle| \frac{\dd t}{t} \leq J 2^{-\ell}~.
\end{align}
Therefore, using Plancherel's identity and inequality \eqref{estimate_using_pointwise_gradient} we obtain
\begin{align}
    \begin{split}\label{estimate_in_proof_last_intermediate_2}
 \bigg\Vert \bigg( \sum_{n\in\mathbb{Z}} \sum_{m=0}^{2^\ell -1} & \big|(\Fm_{2^n+2^{n-\ell}(m+1)}-  \Fm_{2^n+2^{n-\ell}m})S_{j+n}f \big|^2\bigg)^{1/2} \bigg\Vert_{L^2(\mathbb{R}^d)} \\
    & \leq  \bigg( \sum_{n\in\mathbb{Z}} \sum_{m=0}^{2^\ell -1} \big\Vert (\Fm_{2^n+2^{n-\ell}(m+1)} -  \Fm_{2^n+2^{n-\ell}m})S_{j+n}f \big\Vert_{L^2(\mathbb{R}^d)}^2\bigg)^{1/2}  \\
    & \leq \bigg( \sum_{n\in\mathbb{Z}} 2^{\ell} J^{(2-\alpha)} 2^{-\ell(2-\alpha)} 2^\alpha\Vert a \Vert_{L^\infty}^\alpha \Vert S_{n+j}f\Vert_{L^2}^2 \bigg)^{1/2} \\
    & \leq \sqrt{2 J^{(2-\alpha)}} 2^{\ell(-1+\alpha)/2} \Vert a \Vert_{L^\infty}^{\alpha/2}\Vert f \Vert_{L^2(\mathbb{R}^d)}~,
    \end{split}
\end{align}
for any $0<\alpha<1$.
Combining \eqref{estimate_in_proof_last_intermediate_1}, \eqref{estimate_in_proof_last_intermediate_2} we obtain  
\begin{align*}
    \bigg\Vert \bigg( & \sum_{n\in\mathbb{Z}} \sup_{t\in [2^n,2^{n+1}]}|\Fm_tf  - \Fm_{2^n}f|^2\bigg)^{1/2} \bigg\Vert_{L^2(\mathbb{R}^d)} \\ 
    & \leq \sqrt{2} \sum_{\ell \geq 0} \sum_{j\in\mathbb{Z}} 
    \bigg\Vert \bigg( \sum_{n\in\mathbb{Z}} \sum_{m=0}^{2^\ell -1}\big|(\Fm_{2^n+2^{n-\ell}(m+1)}  - \Fm_{2^n+2^{n-\ell}m})S_{j+n}f \big|^2\bigg)^{1/2} \bigg\Vert_{L^2(\mathbb{R}^d)} \\
    & \leq \sqrt{2} (32 \sqrt{\pi} J)^\beta (\sqrt{2 J^{(2-\alpha)}} \Vert a \Vert_{L^\infty}^{\alpha/2} )^{1-\beta}\sum_{\ell \geq 0} \sum_{j\in\mathbb{Z}} (2^{-|j|/2} 2^{\ell/2})^\beta (2^{\ell(-1+\alpha)/2})^{1-\beta} \Vert f \Vert_{L^2(\mathbb{R}^d)}~,
\end{align*}
where $0<\beta<1$ is such that $\beta < \tfrac{1-\alpha}{2-\alpha}$, ensuring summability of the double series in the last line.
Choosing appropriate numerical values for the parameters $\alpha$ and $\beta$ (e.g. $\alpha=7/12$ and $\beta=1/7$) concludes the proof of Theorem \ref{general_thm}.
 \qed

 \begin{remark}
     The hypotheses \eqref{eq_general_thm_pointwise_estimate_multiplier_1} and \eqref{eq_general_thm_pointwise_estimate_multiplier_gradient} in the statement of Theorem \ref{general_thm} are not sharp. For example, the pointwise conditions in \eqref{eq_general_thm_pointwise_estimate_multiplier_1} can be replaced by
$$|a(\xi)|\leq C_1 \big(\sigma|\xi|\big)^b,\qquad |a(\xi)|\le C_2\bigg( \frac{1}{\sigma|\xi|}\bigg)^b~,$$
for some $b>0$. 
Moreover, Theorem \ref{general_thm} can be  
  also reformulated in the spirit of the abstract approach in \cite{MSZK20}. In particular, one may prove an $r$-variational estimate instead of the maximal function estimate. 
 \end{remark}

\section{Averages of log concave denisities - Proof of Theorem \ref{thm_av_logconc} }
\label{sec:pf_thm_av_logconc}
In this section we present the proof of Theorem \ref{thm_av_logconc} which will be obtained with the aid of Theorem \ref{general_thm}.

The core of the proof is the following pointwise estimate for the difference of multiplier symbols, which we establish relying crucially on the variance type bound \eqref{eq:thin_shell}. 
\begin{proposition}\label{propo:Linfty_diff}
    Let $K$ be a symmetric, log-concave probability density on $\mathbb{R}^d$ which is isotropic with variance $\sigma^2$. Let $A$ be the spherical average \eqref{eq:spherA} of $K$, and let $a$ denote the Fourier transform of $A$, $a:=\widehat{A}$. Then for all $\varepsilon>0$ there exists $C_\varepsilon>0$, uniform over the dimension $d\ge 1$ and the choice of $K$, such that for all $d\geq 1$
    \begin{align}\label{eq:Linfty_diff}
    |a(\xi)-g_{\sigma}(\xi)|
    \leq C_\varepsilon d^{-1/4+\varepsilon}~, \qquad \xi\in\mathbb{R}^d~.
    \end{align}
\end{proposition}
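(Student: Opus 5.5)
The proof rests on the fact that both $a$ and $g_\sigma$ are radial, so both can be written as averages of the spherical Fourier transform. Let $\omega_d(s)$ denote the Fourier transform of the normalized surface measure on $\mathbb{S}^{d-1}$, evaluated at any $\xi$ with $|\xi|=s$. Let $X$ be distributed according to $K$, let $Y$ be distributed according to $G_\sigma$, and put $r=|\xi|$. Since $A$ is the rotation average of $K$, its Fourier transform is the rotation average of $\widehat K$. This gives
\[
a(\xi)=\mathbb{E}\,\omega_d(r|X|),\qquad g_\sigma(\xi)=\mathbb{E}\,\omega_d(r|Y|).
\]
Hence
\[
|a(\xi)-g_\sigma(\xi)|\le \mathbb{E}\,|\omega_d(r|X|)-\omega_d(r\sigma\sqrt d)|+\mathbb{E}\,|\omega_d(r|Y|)-\omega_d(r\sigma\sqrt d)|.
\]
So it suffices to bound $\mathbb{E}|\omega_d(r|Z|)-\omega_d(r\sigma\sqrt d)|$ uniformly in $r$ for any isotropic log-concave $Z$ with variance $\sigma^2$. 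The Gaussian $Y$ is itself such a $Z$. By scaling I take $\sigma=1$.

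The key analytic input is a Lipschitz-type bound for $s\mapsto \omega_d(s)$. I use $\omega_d(s)=\Gamma(d/2)(\pi s)^{1-d/2}J_{d/2-1}(2\pi s)$ and its derivative formula $\omega_d'(s)=-\tfrac{4\pi^2 s}{d}\,\omega_{d+2}(s)$. Since $|\omega_{d+2}|\le 1$, this gives $|\omega_d'(s)|\lesssim s/d$. This bound is only useful for $s\lesssim\sqrt d$. For larger $s$ I will also use the decay $|\omega_d(s)|\lesssim \min(1,(\sqrt d/s)^{1/2})$, and possibly a refined bound $|\omega_{d+2}(s)|\lesssim (\sqrt d/s)^{c}$. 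These are standard Bessel estimates, and I expect the paper's Appendix to supply what is needed.

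With these bounds I split into regimes of $r$.

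\emph{Small frequencies, $r\le d^{-1/4}$.} The mean value theorem gives
\[
|\omega_d(r|Z|)-\omega_d(r\sqrt d)|\lesssim \frac{r^2}{d}\,\bigl(|Z|+\sqrt d\bigr)\,\bigl||Z|-\sqrt d\bigr|.
\]
By Cauchy--Schwarz, Theorem \ref{ts_thm}, and $\mathbb{E}|Z|^2=d$, the expectation is $\lesssim r^2 d^{-1/2}$. This is acceptable when $r$ is bounded by a small power of $d$.

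\emph{Larger frequencies.} Here I use that $\omega_d(s)$ is a Gaussian-like function $e^{-2\pi^2 s^2/d}$ up to an error, and that $\omega_d(r|Z|)$ is exponentially small when $r$ is large. More precisely, I use the thin-shell bound: $\mathbb{P}\bigl(||Z|-\sqrt d|>t\bigr)\lesssim e^{-ct}$ from \cite[Corollary 1.3]{KlaLeh}, together with \cite[Lemma 1]{LaMa} for the Gaussian. On the event $|Z|\ge \sqrt d/2$, both $\omega_d(r|Z|)$ and $\omega_d(r\sqrt d)$ are bounded by $\sup_{s\ge r\sqrt d/2}|\omega_d(s)|$. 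This supremum decays once $r\gg 1$, and it is polynomially small in $d$ once $r\ge d^{\varepsilon}$. The complementary event has probability $e^{-c\sqrt d}$.

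\emph{Intermediate frequencies, $d^{-1/4}\lesssim r\lesssim d^{\varepsilon}$.} I return to the derivative bound, now in the sharper form $|\omega_d'(s)|\lesssim (s/d)\,|\omega_{d+2}(s)|$, and restrict to the shell $||Z|-\sqrt d|\le d^{\varepsilon}$. On the shell, the mean value theorem gives a contribution
\[
\lesssim \frac{r\sqrt d}{d}\cdot r\,d^{\varepsilon}\cdot\sup_{s\approx r\sqrt d}|\omega_{d+2}(s)|.
\]
Off the shell the contribution is controlled by the tail probability. Optimizing the cutoffs between these regimes should give $d^{-1/4+\varepsilon}$.

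\textbf{Main obstacle.} The hard step is the intermediate regime $r\approx 1$, where $s\approx \sqrt d$. There the naive derivative bound $s/d\sim d^{-1/2}$, multiplied by the shell width $O(1)$ and the factor $r$, yields $O(d^{-1/2})$. So in principle the estimate is better than needed, and the real issue is uniformity. What must be controlled is a sharp bound $|\omega_{d+2}(s)|\lesssim e^{-cs^2/d}$ plus an error, valid uniformly for $s\lesssim d^{1/2+\varepsilon}$. I would try first to prove it from the Gaussian comparison $|\omega_d(s)-e^{-2\pi^2 s^2/d}|\lesssim d^{-1/2}$ on the range $s\le d^{1/2+\varepsilon}$, via the Appendix's Bessel asymptotics. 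The exponent $1/4$ then arises from balancing this comparison error against the thin-shell losses.
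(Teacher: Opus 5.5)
Your reduction to the radial profile is sound. The formulas $a(\xi)=\mathbb{E}\,\omega_d(r|X|)$ and $g_\sigma(\xi)=\mathbb{E}\,\omega_d(r|Y|)$ hold, and the identity $\omega_d'(s)=-\frac{4\pi^2 s}{d}\,\omega_{d+2}(s)$ is correct. Your mean-value/Cauchy--Schwarz step gives $\mathbb{E}\,|\omega_d(r|Z|)-\omega_d(r\sqrt d)|\lesssim r^2 d^{-1/2}$ for \emph{every} $r$, using only $|\omega_{d+2}|\le 1$ and Theorem~\ref{ts_thm}. So the region $r\approx 1$ that you call the main obstacle is harmless, and your shell-restricted intermediate regime is redundant.

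The genuine gap is the large-frequency regime. Since $r^2 d^{-1/2}\le d^{-1/4+\varepsilon}$ only for $r\lesssim d^{1/8+\varepsilon/2}$, you need $\sup_{s\ge r\sqrt d/2}|\omega_d(s)|\lesssim d^{-1/4+\varepsilon}$ already at $r\approx d^{1/8}$. This must hold uniformly over \emph{all} larger $s$, not only for $s\lesssim d^{1/2+\varepsilon}$. The one decay bound you actually state, $|\omega_d(s)|\lesssim(\sqrt d/s)^{1/2}$, gives $r^{-1/2}$; balancing it against $r^2d^{-1/2}$ yields only $d^{-1/10}$. Even the stronger $|\omega_d(s)|\lesssim \sqrt d/s$ of Proposition~\ref{propo_estimate_for_mu} yields only $d^{-1/6}$. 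Your assertion that this supremum is polynomially small once $r\ge d^{\varepsilon}$ requires decay exponential in $s/\sqrt d$, which you do not establish. None of your listed ingredients produces the exponent $1/4$ that your last sentence attributes to balancing a Gaussian comparison error against thin-shell losses. A minor point: the paper uses \cite[Corollary 1.3]{KlaLeh} only as a tail $e^{-c\sqrt t}$, not $e^{-ct}$. This is harmless, because Chebyshev and Theorem~\ref{ts_thm} already give $\mathbb{P}(|Z|<\sqrt d/2)\lesssim 1/d$.

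What is missing is a Lipschitz bound that is \emph{linear} in $r$, and it is elementary. Writing $\omega_d(s)=\mathbb{E}\,e^{-2\pi i s\theta_1}$ with $\theta$ uniform on $\mathbb{S}^{d-1}$ gives $|\omega_d'(s)|\le 2\pi\,\mathbb{E}|\theta_1|\le 2\pi d^{-1/2}$ for all $s$. Alternatively, insert $|\omega_{d+2}(s)|\lesssim\sqrt d/s$ into your identity. Your mean-value step then gives $|a(\xi)-g_\sigma(\xi)|\lesssim r d^{-1/2}$. For large $r$ you need not go through $\omega_d$ at all: $|a(\xi)|\le(2\pi r)^{-1}$ by Lemma~\ref{lemma:hyp_gen_thm} and $g_\sigma$ is a Gaussian, so $|a(\xi)-g_\sigma(\xi)|\lesssim r^{-1}$. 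Balancing at $r=d^{1/4}$ gives $Cd^{-1/4}$.

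This is exactly the balance in the paper's proof. The paper splits at $\sigma|\xi|\in[d^{-1/4},d^{1/4}]$ and uses Lemma~\ref{lemma:hyp_gen_thm} outside this range. Inside, it couples $a$ and $g_\sigma$ through a single Gaussian vector $X$, so the phase difference is linear in $|\xi|$. There the Gaussian thin-shell bound controls $|X|$ against $\sqrt d$, which is the source of the $d^{\varepsilon}$ loss.

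Alternatively, keep your quadratic bound and import the exponential decay $|\omega_d(s)|\lesssim e^{-2\pi s/\sqrt d}+e^{-cd}$ of \cite[Lemma 4.1]{MSW24}, which is quoted in the proof of Proposition~\ref{propo_estimate_for_mu}. Your scheme then gives $|a(\xi)-g_\sigma(\xi)|\lesssim d^{-1/2}\log^2 d$, which is stronger than the statement.
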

In the proof of Proposition \ref{propo:Linfty_diff} we will make use of the following identities.
\begin{lemma} 
The following pointwise identities hold for $\xi\in\mathbb{R}^d$
\begin{align}\label{eq:rewritehatA}
a(\xi):=\widehat{A}(\xi)= \int_{\R^d} K(y) \bigg(\mathop{\mathbb{E}}_{X\sim\mathcal{N}(0,I_d)} e^{-2\pi i |y|\langle \tfrac{X}{|X|},\xi \rangle} \bigg) \dd y~,
\end{align}
\begin{align}\label{eq:rewritehatg}
g_{1/\sqrt{d}}(\xi) = \mathop{\mathbb{E}}_{X\sim \mathcal{N}(0,I_d)} e^{-2\pi i \langle \tfrac{X}{\sqrt{d}}, \xi\rangle}~.
\end{align}
\end{lemma}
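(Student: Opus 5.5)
For \eqref{eq:rewritehatA}, I would start from the definition \eqref{eq:spherA} and use Fubini to write the Fourier transform of the spherical average as an average over rotations:
\[
a(\xi)=\int_{SO(d)}\int_{\R^d}K(Rx)e^{-2\pi i\langle x,\xi\rangle}\dd x\,\dd R .
\]
Substituting $y=Rx$ and using that $\langle R^{-1}y,\xi\rangle=\langle y,R\xi\rangle$, this becomes
\[
a(\xi)=\int_{\R^d}K(y)\int_{SO(d)}e^{-2\pi i\langle y,R\xi\rangle}\dd R\,\dd y .
\]
All interchanges of integrals are justified because $K\in L^1$ and the integrands are bounded.

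The inner integral depends only on $y$ and $|\xi|$. By rotation invariance, for $y\neq0$ the vector $R^{-1}y/|y|$ is uniformly distributed on $\mathbb{S}^{d-1}$. The same is true of $X/|X|$ for $X\sim\mathcal N(0,I_d)$, since the Gaussian is rotation invariant. Hence
\[
\int_{SO(d)}e^{-2\pi i\langle y,R\xi\rangle}\dd R=\mathop{\mathbb{E}}_{u\sim\sigma}e^{-2\pi i|y|\langle u,\xi\rangle}=\mathop{\mathbb{E}}_{X\sim\mathcal N(0,I_d)}e^{-2\pi i|y|\langle X/|X|,\xi\rangle}.
\]
The point $y=0$ is a null set and does not matter. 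Substituting this back gives \eqref{eq:rewritehatA}.

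For \eqref{eq:rewritehatg}, I would compute the characteristic function of the standard Gaussian coordinatewise:
\[
\mathbb{E}\,e^{-2\pi i\langle X,\eta\rangle}=e^{-2\pi^2|\eta|^2}.
\]
Taking $\eta=\xi/\sqrt d$ gives $e^{-2\pi^2|\xi|^2/d}$, which is the stated formula for $g_{1/\sqrt d}(\xi)$ in the notation section. Equivalently, $G_{1/\sqrt d}$ is the law of $X/\sqrt d$, so its Fourier transform is the expectation in \eqref{eq:rewritehatg}.

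Nothing here is a real obstacle. The only care needed is the identification of Haar-averaged rotations with the uniform law on the sphere, which comes from uniqueness of the rotation-invariant probability measure on $\mathbb{S}^{d-1}$.
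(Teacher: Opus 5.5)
Your proposal is correct and follows essentially the same route as the paper. Both use Fubini to move the rotation from $K$ onto the exponential, identify the Haar average over $SO(d)$ with the uniform law on $\mathbb{S}^{d-1}$ and hence with the law of $X/|X|$, and use the Gaussian Fourier transform for the second identity; your explicit change of variables $y=Rx$ and the appeal to uniqueness of the rotation-invariant measure only spell out steps the paper leaves implicit.
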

The proof of the lemma is a standard computation that we include for the reader's convenience.
\proof We justify \eqref{eq:rewritehatA} first. From the definition of spherical average \eqref{eq:spherA} and Fubini's theorem, we have
\begin{align*}
    a(\xi)  = \int_{\R^d} \bigg(\int_{SO(d)} K(Ry)\dd R \bigg) e^{-2\pi i \langle y,\xi \rangle}\dd y = \int_{\R^d} K(y) \bigg(\int_{SO(d)} e^{-2\pi i \langle R^{T}y,\xi\rangle}\dd R \bigg) \dd y ~,
\end{align*}
and, in view of the identities
$$\int_{SO(d)} e^{-2\pi i \langle R^{T}y,\xi\rangle}\dd R = \;\mathop{\mathbb{E}}_{U\sim \text{Unif}(\mathbb{S}^{d-1})}  e^{-2\pi i |y|\langle U,\xi\rangle} = \; \mathop{\mathbb{E}}_{X\sim \mathcal{N}(0,I_d)} e^{-2\pi i |y|\langle \tfrac{X}{|X|},\xi \rangle}~,$$
we obtain \eqref{eq:rewritehatA}. To justify \eqref{eq:rewritehatg} we use the fact that the Fourier transform of a Gaussian function is itself a Gaussian function
\begin{align*}
g_{1/\sqrt{d}}(\xi)  = e^{-2\pi^2|\xi|^2/d}
 = \frac{1}{(2\pi)^{d/2}}\int_{\R^d} e^{-2\pi i \langle \tfrac{x}{\sqrt{d}}, \xi\rangle} e^{-|x|^2/2}\dd x  = \mathop{\mathbb{E}}_{X\sim \mathcal{N}(0,I_d)} e^{-2\pi i \langle \tfrac{X}{\sqrt{d}}, \xi\rangle}~.
\end{align*}
\qed

We proceed with the proof of Proposition \ref{propo:Linfty_diff}.

\textit{Proof of Proposition \ref{propo:Linfty_diff}.} By invariance under rescaling, we can assume that $\sigma=1/\sqrt{d}$. Let $\alpha \in (0,1/2)$ be a parameter to be chosen later. From the estimates in Lemma \ref{lemma:hyp_gen_thm} display \eqref{eq:pointwise_mult1} with $\sigma=1/\sqrt{d}$, we see that for all $\xi\in\mathbb{R}^d$, $|\xi|\notin [d^{1/2-\alpha},d^{1/2+\alpha}]$, it holds 
\begin{align}\label{eq:pwdiff_ag_case1}
|a(\xi)- g_{1/\sqrt{d}}(\xi)|\lesssim \frac{1}{d^{\alpha}}~.
\end{align}
Here, the implicit constant is uniform with respect to $K$.

Thus, we focus on the case $|\xi|\in [d^{1/2-\alpha},d^{1/2+\alpha}]$. 
We use \eqref{eq:rewritehatA} and \eqref{eq:rewritehatg} to rewrite
\begin{align*}
    |a(\xi)-g_{1/\sqrt{d}}(\xi)|= \bigg|\int_{\mathbb{R}^d} K(y)\bigg(  \mathop{\mathbb{E}}_{X\sim\mathcal{N}(0,\mathrm{I}_d)} e^{-2\pi i |y|\langle \frac{X}{|X|},\xi\rangle} - e^{-2\pi i\langle \frac{X}{\sqrt{d}},\xi\rangle}\bigg) \dd y \bigg|~.
\end{align*}
Let $\frac{1}{2} > \varepsilon >0$ be a parameter to be chosen later. We split the expectation over the two events $||X|-\sqrt{d}|> d^\varepsilon$ and $||X|-\sqrt{d}|\leq d^\varepsilon$ and we apply the triangle inequality to obtain
\begin{align}\label{eq:splittingExp}\begin{split}
    \bigg|\int_{\mathbb{R}^d} K(y) &\bigg(  \mathop{\mathbb{E}}_{X\sim\mathcal{N}(0,\mathrm{I}_d)} e^{-2\pi i |y|\langle \frac{X}{|X|},\xi\rangle} - e^{-2\pi i\langle \frac{X}{\sqrt{d}},\xi\rangle}\bigg) \dd y \bigg|\\
    \leq & \, \int_{\mathbb{R}^d} K(y)\bigg( \mathop{\mathbb{E}}_{X\sim\mathcal{N}(0,\mathrm{I}_d)} \mathbf{1}_{\lbrace ||X|-\sqrt{d}|\geq d^\varepsilon\rbrace}(X)\bigg| e^{-2\pi i |y|\langle \frac{X}{|X|},\xi\rangle} - e^{-2\pi i\langle \frac{X}{\sqrt{d}},\xi\rangle}\bigg|\bigg) \dd y \\
    & + \int_{\mathbb{R}^d} K(y)\bigg(  \mathop{\mathbb{E}}_{X\sim\mathcal{N}(0,\mathrm{I}_d)} \mathbf{1}_{\lbrace ||X|-\sqrt{d}|\leq d^\varepsilon\rbrace}(X)\bigg| e^{-2\pi i |y|\langle \frac{X}{|X|},\xi\rangle} - e^{-2\pi i\langle \frac{X}{\sqrt{d}},\xi\rangle}\bigg|\bigg) \dd y~.
    \end{split}
\end{align}
To deal with the first term
we make use of the following concentration property of $d$-dimensional Gaussian random vectors which is a particular case of \cite[Corollary 1.3]{KlaLeh}: let $X\sim\mathcal{N}(0,\mathrm{I}_d)$, then for any $t>0$ we have
$$\mathbb{P}_X\bigg(\big||X|-\sqrt{d}\big|\geq t \bigg)\leq \mathbb{P}_X\bigg(\frac{\big||X|^2-{d}\big|}{\sqrt{d}}\geq t \bigg)\lesssim e^{-c\sqrt{t}}~,$$
where $c>0$ is a universal constant.
By choosing $t=d^\varepsilon$ we obtain
$$ \int_{\mathbb{R}^d} K(y)\bigg( \mathop{\mathbb{E}}_{X\sim\mathcal{N}(0,\mathrm{I}_d)} \mathbf{1}_{\lbrace ||X|-\sqrt{d}|\geq d^\varepsilon\rbrace}(X)\bigg| e^{-2\pi i |y|\langle \frac{X}{|X|},\xi\rangle} - e^{-2\pi i\langle \frac{X}{\sqrt{d}},\xi\rangle}\bigg|\bigg) \dd y \lesssim e^{-c d^{\varepsilon/2}}~.$$
It remains to bound the second term on the right-hand side of \eqref{eq:splittingExp}. For each $y$ we have
\begin{align*}
  \mathop{\mathbb{E}}_{X\sim\mathcal{N}(0,\mathrm{I}_d)} & \mathbf{1}_{\lbrace ||X|-\sqrt{d}|\leq d^\varepsilon\rbrace}(X)\bigg| e^{-2\pi i |y|\langle \frac{X}{|X|},\xi\rangle} - e^{-2\pi i\langle \frac{X}{\sqrt{d}},\xi\rangle}\bigg| \\
   & \lesssim \mathop{\mathbb{E}}_{X\sim\mathcal{N}(0,\mathrm{I}_d)}  \mathbf{1}_{\lbrace ||X|-\sqrt{d}|\leq d^\varepsilon\rbrace}(X) \bigg| \frac{|y|}{|X|}-\frac{1}{\sqrt{d}} \bigg| \big|\langle X,\xi\rangle \big| \\
   & = \mathop{\mathbb{E}}_{X\sim\mathcal{N}(0,\mathrm{I}_d)}  \mathbf{1}_{\lbrace ||X|-\sqrt{d}|\leq d^\varepsilon\rbrace}(X) \bigg| \frac{|y|-1+1}{|X|}-\frac{1}{\sqrt{d}} \bigg| \big|\langle X,\xi\rangle \big| \\
   & \leq \mathop{\mathbb{E}}_{X\sim\mathcal{N}(0,\mathrm{I}_d)}  \mathbf{1}_{\lbrace ||X|-\sqrt{d}|\leq d^\varepsilon\rbrace}(X) \bigg( \frac{||y|-1|}{|X|} \big|\langle X,\xi\rangle \big| + \bigg| \frac{1}{|X|}-\frac{1}{\sqrt{d}} \bigg| \big|\langle X,\xi\rangle \big| \bigg)~.
\end{align*}
Recall that we are in the regime $|\xi|\in [d^{1/2-\alpha}, d^{1/2+\alpha}]$. We estimate
\begin{align*}
    \int_{\mathbb{R}^d}  K(y)\dd y &\bigg(\mathop{\mathbb{E}}_{X\sim\mathcal{N}(0,\mathrm{I}_d)}  \mathbf{1}_{\lbrace ||X|-\sqrt{d}|\leq d^\varepsilon\rbrace}(X) \bigg| \frac{1}{|X|}-\frac{1}{\sqrt{d}} \bigg| \big|\langle X,\xi\rangle \big|\bigg) \\
    & = \bigg(\mathop{\mathbb{E}}_{X\sim\mathcal{N}(0,\mathrm{I}_d)}  \mathbf{1}_{\lbrace ||X|-\sqrt{d}|\leq d^\varepsilon\rbrace}(X)  \frac{||X|^2-d|}{|X|^2\sqrt{d}+|X|d}  \big|\langle X,\xi\rangle \big|\bigg) \\
    & \leq d^{1/2+\alpha} \frac{d^{1/2+\varepsilon}}{d^{3/2}-d^{1+\varepsilon}}\mathop{\mathbb{E}}_{X_1\sim\mathcal{N}(0,1)} |X_1|\\
    & \lesssim d^{-1/2+\alpha+\varepsilon}~,
\end{align*}
where in the second-last line we have used invariance under rotations of $|X|$.
It remains to bound
$$\int_{\mathbb{R}^d} K(y)\bigg(  \mathop{\mathbb{E}}_{X\sim\mathcal{N}(0,\mathrm{I}_d)} \mathbf{1}_{\lbrace ||X|-\sqrt{d}|\leq d^\varepsilon\rbrace}(X)  \frac{||y|-1|}{|X|} \big|\langle X,\xi\rangle \big| \bigg) \dd y ~.$$
We rely on the variance type bound \eqref{eq:thin_shell} with $\sigma=1/\sqrt{d}$.
By Cauchy--Schwarz, this immediately implies
$$\mathbb{E}_Y||Y|-1|\lesssim\frac{1}{\sqrt{d}}~.$$
Using this fact, together with invariance under rotations of $X$ and the fact that $|\xi|\in[d^{1/2-\alpha},d^{1/2+\alpha}]$, we obtain

\begin{align*}
  \int_{\mathbb{R}^d} K(y)& \bigg(  \mathop{\mathbb{E}}_{X\sim\mathcal{N}(0,\mathrm{I}_d)} \mathbf{1}_{\lbrace ||X|-\sqrt{d}|\leq d^\varepsilon\rbrace}(X)  \frac{||y|-1|}{|X|} \big|\langle X,\xi\rangle \big| \bigg) \dd y  \\
  & \lesssim \frac{d^{1/2+\alpha}}{d^{1/2}-d^\varepsilon}\mathop{\mathbb{E}}_{X_1\sim\mathcal{N}(0,1)} |X_1| \int_{\mathbb{R}^d} K(y)||y|-1|\dd y\\
  & \lesssim d^{-1/2+\alpha}~.
\end{align*}
Combining all together, we see that for $|\xi|\in[d^{1/2-\alpha},d^{1/2+\alpha}]$
$$|a(\xi)-g_{1/\sqrt{d}}(\xi)| \lesssim e^{-c d^{\varepsilon/2}}+d^{-1/2+\alpha+\varepsilon}+ d^{-1/2+\alpha}~.$$
In view of \eqref{eq:pwdiff_ag_case1} and the last display, choosing $\alpha=1/4$ and $\varepsilon\rightarrow 0^+$ we conclude.

\qed

To invoke Theorem \ref{general_thm}, we need to verify that the multiplier symbols satisfy the hypothesis \eqref{eq_general_thm_pointwise_estimate_multiplier_1}, \eqref{eq_general_thm_pointwise_estimate_multiplier_gradient} . This will be an immediate consequence of the following lemma.
\begin{lemma}\label{lemma:hyp_gen_thm}
    Let $K$ be a symmetric, log-concave probability density of $\mathbb{R}^d$ which we assume isotropic with variance $\sigma^2$. Let $A$ be the spherical average \eqref{eq:spherA} of $K$, and let $a$ denote the Fourier transform of $A$, $a:=\widehat{A}$. Then
    \begin{align}\label{eq:pointwise_mult1}
        |a(\xi)-1|\leq 2\pi \sigma |\xi|~, \qquad |a(\xi)|\leq \frac{1}{2\pi\sigma |\xi|}~,
    \end{align}
    \begin{align}\label{eq:pointwise_mult2}
        |\langle \xi, \nabla a(\xi)\rangle|\leq 2~.
    \end{align}
\end{lemma}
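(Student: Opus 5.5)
The lemma should follow by proving the three bounds for the Fourier transform $k:=\widehat K$ of a single symmetric isotropic log-concave density, and then averaging over rotations. These single-density bounds are essentially \cite[Lemma 5.10]{DGM18}. By Fubini, as in the proof of \eqref{eq:rewritehatA},
\[
a(\xi)=\int_{SO(d)} k(R\xi)\,\dd R .
\]
For each $R\in SO(d)$, the function $K\circ R^{T}$ is again a symmetric log-concave probability density, isotropic with variance $\sigma^2$, and its Fourier transform is $\xi\mapsto k(R\xi)$. Since $|R\xi|=|\xi|$ and
\[
\langle \xi,\nabla(k\circ R)(\xi)\rangle=\langle R\xi,(\nabla k)(R\xi)\rangle,
\]
the three bounds are rotation invariant. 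Averaging over $SO(d)$ with the triangle inequality then transfers them from $k$ to $a$. So it suffices to treat $k$.

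\textbf{Reduction to one dimension.} Fix $\xi\neq 0$, put $\theta=\xi/|\xi|$ and $s=|\xi|$. Let $f$ be the density of $\langle X,\theta\rangle$, where $X\sim K$. Then $k(\xi)=\widehat f(s)$. By Prékopa--Leindler, $f$ is a symmetric log-concave density on $\R$, and by isotropy it has variance $\sigma^2$.

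\textbf{First bound.} Use $|1-e^{-2\pi i u}|\le 2\pi|u|$ and Cauchy--Schwarz:
\[
|k(\xi)-1|\le 2\pi\,\mathbb{E}|\langle X,\xi\rangle|\le 2\pi\big(\mathbb{E}\langle X,\xi\rangle^2\big)^{1/2}=2\pi\sigma|\xi| .
\]

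\textbf{Second bound.} Integrate by parts to get $\widehat f(s)=(2\pi i s)^{-1}\widehat{f'}(s)$, hence $|\widehat f(s)|\le (2\pi s)^{-1}\|f'\|_{L^1}$. A symmetric log-concave $f$ is unimodal with its maximum at $0$, so $\|f'\|_{L^1}=2f(0)$. It then remains to use the classical fact that a one-dimensional log-concave density satisfies $f(0)^2\sigma^2\le C$ for an absolute constant $C$. This gives $|k(\xi)|\lesssim (\sigma|\xi|)^{-1}$. Getting the sharp constant $1/(2\pi)$ stated in \eqref{eq:pointwise_mult1} is the main obstacle. I would take it directly from \cite[Lemma 5.10]{DGM18} instead of re-deriving the optimal bound on $f(0)\sigma$. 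For the application to Theorem \ref{general_thm}, any absolute constant suffices.

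\textbf{Gradient bound.} Note $\langle\xi,\nabla k(\xi)\rangle=s\,\frac{\dd}{\dd s}\widehat f(s)$. Differentiating under the integral and integrating by parts gives
\[
s\,\frac{\dd}{\dd s}\widehat f(s)=-\widehat f(s)-\widehat{(xf')}(s).
\]
Hence
\[
|\langle\xi,\nabla k(\xi)\rangle|\le 1+\int_{\R}|x f'(x)|\,\dd x .
\]
Because $f$ is symmetric and nonincreasing on $[0,\infty)$, we have $|xf'(x)|=-xf'(x)$ there. Integration by parts (the boundary terms vanish by the exponential decay of log-concave densities) then gives $\int_{\R}|xf'(x)|\,\dd x=\int_{\R} f=1$. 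This yields the bound $2$ in \eqref{eq:pointwise_mult2}, and averaging over $SO(d)$ finishes the proof.
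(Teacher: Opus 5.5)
Your proposal follows the paper's proof. The paper also writes $a(\xi)=\int_{SO(d)}k(R\xi)\,\dd R$ with $k=\widehat K$ and transfers the three bounds for $k$ from \cite[Lemma 5.10]{DGM18} by rotation invariance. Your extra re-derivation through one-dimensional marginals is the Bourgain argument behind that citation, and like the paper you leave the sharp constant $1/(2\pi)$ in the decay bound to the citation.

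One point needs to be made precise. A marginal $f$ can jump at the ends of its support: for example, the coordinate marginals of the uniform density on $[-\tfrac12,\tfrac12]^d$ are uniform on $[-\tfrac12,\tfrac12]$. So $f'$ must be the distributional derivative, a finite measure, with $\|f'\|_{L^1}$ meaning its total variation $2f(0)$. The identity $\int_{\R}|x|\,\dd|f'|(x)=\int_{\R}f=1$ then follows by Lebesgue--Stieltjes integration by parts. With that reading your argument is correct. One small slip: it is $K\circ R$, not $K\circ R^{T}$, whose Fourier transform is $k(R\xi)$, which is harmless after averaging over $SO(d)$.
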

\proof
The analogous estimates for $k:=\widehat{K}$ can be found in \cite[Lemma 5.10]{DGM18}, adapting \cite[Section~4]{B86}.
By the definition of $A$ we get that
$$a(\xi)=\int_{SO(d)}k(R\xi)\dd R~.$$
Hence, the pointwise estimates in \eqref{eq:pointwise_mult1}
immediately follow from the analogous estimates for $k$. Moreover,
$$|\langle \xi , \nabla_\xi a(\xi)\rangle|=\bigg|\int_{{SO(d)}} \langle R\xi , \nabla k(R\xi)\rangle \, \dd R \, \bigg| \leq 2$$
where the last inequality follows from the analogous estimate for $k$. 

\qed

\subsection{Proof of Theorem \eqref{thm_av_logconc}}
We are ready to proceed with the proof of Theorem \eqref{thm_av_logconc}. We justify \eqref{eq:thm_av_logconc_diff_lim} first.
We apply Theorem \ref{general_thm} to the difference $(\mathcal{A}^K-\mathcal{G}_\sigma)$. It follows from Lemma \ref{lemma:hyp_gen_thm} that $\mathcal{F}(A^K- G_\sigma)$ satisfies the hypothesis \eqref{eq_general_thm_pointwise_estimate_multiplier_1}, \eqref{eq_general_thm_pointwise_estimate_multiplier_gradient} in Theorem \ref{general_thm}. By an application of Theorem \ref{general_thm} combined with the estimate \eqref{eq:Linfty_diff} we obtain
\begin{align}\label{eq:L2_diff}
    \|(\mathcal{A}^K-\mathcal{G}_\sigma)_\ast\|_{L^2(\mathbb{R}^d)} \lesssim  d^{-1/5}~.
\end{align}
On the other hand, one can easily check that
\begin{align}\label{eq:dim-freeLq_diff}
    \sup_{d\geq 1} \|(\mathcal{A}^K-\mathcal{G}_\sigma)_\ast\|_{L^q(\mathbb{R}^d)} <\infty~,
\end{align}
$q\in (1,\infty]$. In fact, dimension-free estimates for $\|\mathcal{G}_\ast\|_{L^q(\mathbb{R}^d)}$ are well-known, while dimension-free estimates for $\|\mathcal{A}^K_\ast\|_{L^q(\mathbb{R}^d)}$ immediately follow from dimension-free estimates for $\|\mathcal{S}_\ast\|_{L^q(\mathbb{R}^d)}$ in view of the pointwise bound $\mathcal{A}^K_\ast f(x)\leq \mathcal{S}_\ast f(x)$ which is a simple consequence of the fact that $A^K$ is a radial probability density. Therefore, \eqref{eq:thm_av_logconc_diff_lim} follows by Marcinkiewciz interpolation \cite[Theorem 1.3.2]{Gr14} between \eqref{eq:L2_diff} and \eqref{eq:dim-freeLq_diff}.

It remains to justify \eqref{eq:thm_av_logconc_lim}. In view of Theorem \ref{propo:GausMonotonicity}, the limit $\lim_{d\rightarrow\infty}\|\mathcal{G}_\ast\|_{L^p(\mathbb{R}^d)}=\lambda(p)$ exists. Moreover, in view of \eqref{eq:thm_av_logconc_diff_lim} we have that
$$ \big| \|\mathcal{A}^K_\ast\|_{L^p(\mathbb{R}^d)} - \|\mathcal{G}_\ast\|_{L^p(\mathbb{R}^d)} \big| \leq C_pd^{-\alpha_p}~,$$
and this is uniform with respect to $K\in\mathcal{L}_d$. This justifies \eqref{eq:thm_av_logconc_lim}.
\qed

\begin{remark} 
\label{rem:fail_non_rad}
The crucial inequality \eqref{eq:thm_av_logconc_diff_lim} from Theorem \ref{thm_av_logconc} may not hold if we consider a generic (i.\ e., not necessarily radial) symmetric log-concave density instead of its spherical average. A simple counterexample is provided by the following lemma, considering the maximal function associated with the difference of averages over symmetric cubes, 
\[\mathcal{Q}_tf(x):=t^{-d}\int_{[-t/2,t/2]^d} f(x-y)\,dy~,\]
and Gaussians. Note that $\mathbf{1}_{[-1/2,1/2]^d}$ is a log-concave isotropic density with variance $1/12,$ so the appropriate Gaussian averages  for comparison are $(\mathcal{G}^{\sqrt{1/12}})_t.$

\begin{lemma}\label{propo:lower_bound_Linfty}
For all $d\geq 1$ we have
\begin{equation*}
    \|(\mQ-\mathcal{G}^{\sqrt{1/12}})_\ast\|_{L^2(\mathbb{R}^d)}\geq e^{-\pi^2/6} ~. 
\end{equation*}
In particular, the difference does not go to zero with the dimension $d\to \infty.$
\end{lemma}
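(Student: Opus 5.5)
The idea is to bound the maximal operator from below by a single averaging operator, and then bound that operator from below by its Fourier multiplier at one well-chosen frequency. For every fixed $t>0$ and every $f\in L^2(\R^d)$ we have the pointwise inequality $|(\mQ_t-(\mathcal{G}^{\sqrt{1/12}})_t)f(x)|\le (\mQ-\mathcal{G}^{\sqrt{1/12}})_\ast f(x)$. Taking $t=1$, it therefore suffices to show that the single convolution operator $\mQ_1-\mG_{\sqrt{1/12}}$ has $L^2(\R^d)$ operator norm at least $e^{-\pi^2/6}$. By Plancherel, the $L^2$ norm of a Fourier multiplier operator equals the $L^\infty$ norm of its symbol. (The symbol here is continuous, since both kernels are integrable, so the essential supremum is a genuine supremum.) The problem thus reduces to exhibiting one frequency $\xi$ at which the two symbols differ by at least $e^{-\pi^2/6}$.

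Next I compute both symbols explicitly. The kernel of $\mQ_1$ is $\mathbf{1}_{[-1/2,1/2]^d}$, and with the paper's normalisation of the Fourier transform its transform factorises as
\[
q(\xi)=\prod_{j=1}^d \frac{\sin(\pi\xi_j)}{\pi\xi_j}.
\]
The Gaussian kernel $G_\sigma$ with $\sigma^2=1/12$ has transform $e^{-2\pi^2\sigma^2|\xi|^2}=e^{-\pi^2|\xi|^2/6}$. Here I use the correct constant $2\pi^2$ in the exponent, consistent with the displayed formula $g_{1/\sqrt d}(\xi)=e^{-2\pi^2|\xi|^2/d}$ in the paper.

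Now I choose $\xi=e_1=(1,0,\dots,0)$. The first factor of $q$ is $\sin\pi/\pi=0$, while all the other factors equal $1$ by continuity at $0$, so $q(e_1)=0$. On the other hand $g_{\sqrt{1/12}}(e_1)=e^{-\pi^2/6}$. Hence
\[
\|(\mQ-\mathcal{G}^{\sqrt{1/12}})_\ast\|_{L^2(\R^d)}\ \ge\ \|q-g_{\sqrt{1/12}}\|_{L^\infty(\R^d)}\ \ge\ e^{-\pi^2/6},
\]
and this holds uniformly in $d$.

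There is no real obstacle in this argument. The only points needing care are the normalisation of the Gaussian symbol, and the justification that the maximal norm dominates the multiplier sup norm. For the latter, one can make it explicit by testing on $f$ with $\widehat f$ concentrated in a small neighbourhood of $e_1$ and using continuity of the symbol.
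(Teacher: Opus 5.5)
Your proposal is correct and follows the paper's proof: restrict to $t=1$, pass via Plancherel to the $L^\infty$ norm of the symbol difference $\prod_{j}\frac{\sin(\pi\xi_j)}{\pi\xi_j}-e^{-\pi^2|\xi|^2/6}$, and reduce to a one-dimensional frequency. Your explicit choice $\xi=e_1$, where the sinc factor vanishes and the Gaussian symbol equals $e^{-\pi^2/6}$, is exactly the point that witnesses the paper's final inequality $\sup_{\xi\in\R}\bigl|\frac{\sin(\pi\xi)}{\pi\xi}-e^{-\pi^2\xi^2/6}\bigr|\ge e^{-\pi^2/6}$.
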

\proof Clearly,
$$\|(\mQ-\mathcal{G}^{\sqrt{1/12}})_\ast\|_{L^2(\mathbb{R}^d)} \geq \|(\mQ-\mathcal{G}^{\sqrt{1/12}})_{t=1}\|_{L^2(\mathbb{R}^d)}~.$$
Hence, by Plancherel's theorem, the statement is a simple consequence of the fact that
\begin{align*}
    \sup_{\xi\in\mathbb{R}^d} \big| |Q_d|^{-1}\widehat{\mathbf{1}_{Q_d}}(\xi) - g_{\sqrt{1/12}}(\xi)\big| & = \sup_{\substack{\xi\in\mathbb{R}^d\\ \xi=(\xi_1,...,\xi_d)}} \bigg| \prod_{j=1}^d \frac{\sin(\pi \xi_j)}{\pi \xi_j} - e^{-\pi^2|\xi|^2/6}\bigg|\\
    & \geq \sup_{\xi\in\mathbb{R}} \bigg| \frac{\sin(\pi \xi)}{\pi \xi} - e^{-\pi^2\xi^2/6}\bigg|\\
    & \ge\, e^{-\pi^2/6}~.
\end{align*}
\qed
\end{remark}

\begin{remark}
Log-concavity may not be preserved by spherical averaging. A simple counterexample is provided below. In particular, the family of radial log-concave densities considered in Theorem \ref{thm_rad_logconc} is strictly contained in the family of spherical averages of symmetric log-concave densities considered in Theorem \ref{thm_av_logconc}.    
\end{remark}
\begin{example}
    Consider the square $Q=[-1/2,1/2]^2\subset\mathbb{R}^2$. The spherical average \eqref{eq:spherA} of $|Q|^{-1}\mathbf{1}_{Q}$ is
    \begin{align*}
        A(x_1,x_2)=\frac{1}{2\pi}\int_0^{2\pi}\mathbf{1}_Q(|x|\cos\theta,|x|\sin\theta)\dd \theta = \begin{cases}
1 & 0\leq |x| \leq 1/2\\
1 -\frac{4}{\pi}\arccos{\frac{1}{2|x|}} & 1/2<|x|\leq 1/\sqrt{2}\\
0 & |x|>1/\sqrt{2}
\end{cases} ~.
    \end{align*}
We recall that a function $f$ on $\R^d$ is log-concave if it satisfies the condition $f((1-t)x+ty)\geq f(x)^{1-t}f(y)^t$, for all $x,y\in \mathbb{R}^d$, $t\in[0,1]$. We check this condition for $A$ at the points $x=(0.51,0)$, $y=(0.58,0)$, with $t=4/7$, obtaining
\begin{align*}
&\text{LHS}:= A((\tfrac{3}{7}0.51+\tfrac{4}{7}0.58,0)) \approx 0.4528893927 ~, \\ 
&\text{RHS}:= A((0.51,0))^{3/7}A((0.58,0))^{4/7}\approx 0.4630364024 ~.
\end{align*}
In particular, $\text{LHS}<\text{RHS}$ and $A$ is not log-concave on its whole domain.
\end{example}

\section{Proofs of Theorems \ref{thm_rad_logconc} and \ref{thm:extremal}}
\label{sec:rad_loconc,extremal}

\subsection{Proof of Theorem \ref{thm_rad_logconc}}
It is clear that \eqref{eq:thm_rad_logconc_diff_lim} is a particular case of \eqref{eq:thm_av_logconc_diff_lim}.

Existence of the limit $\lim_{d\rightarrow\infty}\|\mathcal{G}_\ast\|_{L^p}=:\lm(p)$ is a consequence of the fact that the operator norm of the Gaussian maximal function, $\|\mathcal{G}_\ast\|_{L^p(\mathbb{R}^d)}$, is non-decreasing with the dimension $d$. This fact is established in Theorem \ref{propo:GausMonotonicity}. To justify \eqref{eq:thm_rad_logconc_lim}, we further observe that as a consequence of \eqref{eq:thm_rad_logconc_diff_lim}
$$\big| \|\mathcal{K}_\ast\|_{L^p(\mathbb{R}^d)} -\|\mathcal{G}_\ast\|_{L^p(\mathbb{R}^d)} \big| \leq C_p d^{-\alpha_p}~$$
and this is uniform with respect to $K\in\mathcal{L}_d^{rad}$.

Finally, the lower bound for $\lm(p)$ in \eqref{eq: lm bounds} is an immediate consequence of Theorem \ref{propo:GausMonotonicity} while the upper-bound for $\lm(p)$ in \eqref{eq: lm bounds} is a consequence of the known bound \eqref{eq: mG p/(p-1)}.
\qed

\subsection{Proof of Theorem \ref{thm:extremal}}
As a consequence of \eqref{eq:thm_av_logconc_lim} we have
$$ \lim_{d\rightarrow\infty} \sup_{K\in{\LC}} \|\mathcal{A}^K_\ast\|_{L^p(\mathbb{R}^d)} = \lim_{d\rightarrow\infty} \inf_{K\in{\LC}} \|\mathcal{A}^K_\ast\|_{L^p(\mathbb{R}^d)} = \lm(p)~. $$
Combining this with the fact that for all $K\in\mathcal{L}_d$ and all $d\geq 1$
$$\|\mathcal{A}^K_\ast\|_{L^p(\mathbb{R}^d)}\leq \|\mathcal{K}_\ast\|_{L^p(\mathbb{R}^d)}$$
we have
$$\lm(p)= \lim_{d\rightarrow\infty} \inf_{K\in{\LC}} \|\mathcal{A}^K_\ast\|_{L^p(\mathbb{R}^d)}=\liminf_{d\rightarrow\infty} \inf_{K\in{\LC}} \|\mathcal{A}^K_\ast\|_{L^p(\mathbb{R}^d)} \leq \liminf_{d\rightarrow\infty} \inf_{K\in{\LC}} \|\mathcal{K}_\ast\|_{L^p(\mathbb{R}^d)}~,$$
hence justifying \eqref{eq:liminf_any_symmlogconcave}.  \qed

\begin{remark} As a straightforward consequence of Theorem \ref{thm:extremal} we obtain a lower bound for the high-dimensional limit of the $L^p$ norms of the maximal function associated with symmetric cubes, $\mathcal{Q}_\ast$. Namely, for $p\in (1,\infty)$ 
we have
\begin{align}\label{eq:cubes_geq_lambdap}
\lambda(p)\leq \lim_{d\rightarrow \infty} \|\mathcal{Q}_\ast\|_{L^p(\mathbb{R}^d)}~.
\end{align}
\noindent The existence of the limit in \eqref{eq:cubes_geq_lambdap} is guaranteed by Remark \ref{rem: TensMax}. While, in general, it is unclear whether  $\|\mathcal{B}_\ast\|_{L^p(\mathbb{R}^d)} \leq \|\mathcal{Q}_\ast\|_{L^p(\mathbb{R}^d)} $ or   $\|\mathcal{B}_\ast\|_{L^p(\mathbb{R}^d)} \geq \|\mathcal{Q}_\ast\|_{L^p(\mathbb{R}^d)} $ (except for $d=1$ when they coincide) inequality \eqref{eq:cubes_geq_lambdap} tells us that in the limit as $d\rightarrow\infty$ we have  $\|\mathcal{B}_\ast\|_{L^p(\mathbb{R}^d)} \leq \|\mathcal{Q}_\ast\|_{L^p(\mathbb{R}^d)} $.
\end{remark}

\section{Spherical maximal function - Proof of Theorem \ref{thm:gaussian_sphere} }
\label{sec:sphermax}
In this section, we prove Theorem \ref{thm:gaussian_sphere} by applying Theorem \ref{general_thm} to the difference maximal function $(\mathcal{S}-\mathcal{G}_{1/\sqrt{d}})_\ast$. Throughout this section, we denote by $\mu$ the Fourier multiplier symbol of $\mathcal{S}$,
$$\mu(\xi)  =\frac{1}{\sigma(\mathbb{S}^{d-1})} \int_{\mathbb{S}^{d-1}} e^{- 2\pi i x\cdot \xi} \dd\sigma(x)= \frac{\Gamma(\tfrac{d}{2})}{\pi^{d/2-1}} |\xi|^{-\tfrac{d}{2}+1} J_{\tfrac{d}{2}-1}(2\pi |\xi|)~.$$

As in the proof of Theorem \ref{thm_av_logconc}, the core of the proof is a pointwise estimate on the difference of the multiplier symbols, which rely on the thin-shell property of Gaussian random vectors.
\begin{proposition}\label{propo:pointwise_gauss_sphere}
    For each $\varepsilon>0$ and all $d\geq 3$ it holds
    \begin{align}\label{eq: g-mu}
    |\mu(\xi)-g_{1/\sqrt{d}}(\xi)|\leq C_\varepsilon d^{-1/4+\varepsilon}~, \qquad \xi\in\mathbb{R}^d~,
    \end{align}
    where the constant $C_\varepsilon$ depends only on $\varepsilon$.
\end{proposition}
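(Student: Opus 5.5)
The plan follows the proof of Proposition \ref{propo:Linfty_diff}, with the log-concave kernel $K$ replaced by the normalized surface measure on the unit sphere. This is the degenerate case $|Y|=1$ almost surely, so the term involving $||y|-1|$ disappears entirely. First, by the identity in the proof of \eqref{eq:rewritehatA} (with $K$ replaced by the uniform measure on $\mathbb{S}^{d-1}$),
\[
\mu(\xi)=\mathop{\mathbb{E}}_{X\sim\mathcal{N}(0,I_d)} e^{-2\pi i \langle \frac{X}{|X|},\xi\rangle},
\]
while \eqref{eq:rewritehatg} gives $g_{1/\sqrt{d}}(\xi)=\mathbb{E}\, e^{-2\pi i\langle X/\sqrt{d},\xi\rangle}$. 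Hence
\[
|\mu(\xi)-g_{1/\sqrt d}(\xi)|\le \mathbb{E}\bigl|e^{-2\pi i\langle X/|X|,\xi\rangle}-e^{-2\pi i\langle X/\sqrt d,\xi\rangle}\bigr|.
\]

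Fix $\alpha\in(0,1/2)$, to be taken as $1/4$, and treat the extreme frequencies separately. For $|\xi|\le d^{1/2-\alpha}$ I will use $|\mu(\xi)-1|\lesssim |\xi|/\sqrt d$, which follows from $|1-e^{i\theta}|\le|\theta|$ and $\mathbb{E}|\langle U,\xi\rangle|\le (\mathbb{E}\langle U,\xi\rangle^2)^{1/2}=|\xi|/\sqrt d$ for $U$ uniform on the sphere. The same bound holds for $|g_{1/\sqrt d}(\xi)-1|$, so the difference is $\lesssim d^{-\alpha}$.

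For $|\xi|\ge d^{1/2+\alpha}$ I need decay of both symbols. Trivially $g_{1/\sqrt d}(\xi)=e^{-2\pi^2|\xi|^2/d}\le e^{-2\pi^2 d^{2\alpha}}$. For $\mu$, the cleanest route is to note that the normalized surface measure is the weak limit of radial log-concave isotropic densities of variance $1/d$. One example is the normalized indicators of thin shells $\{1-\eta\le|x|\le 1\}$, rescaled; these are not log-concave, so instead I would take densities proportional to $|x|^{N}\mathbf{1}_{B}$ with $N\to\infty$ and check directly the estimate $|\hat K(\xi)|\le (2\pi\sigma|\xi|)^{-1}$. Since that route is awkward, the more reliable option is the Bessel bound: $|\mu(\xi)|\lesssim \sqrt d/|\xi|$ for $d\ge3$. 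I would prove it from the formula for $\mu$ in terms of $J_{d/2-1}$ together with the Appendix estimates on Bessel functions (the same ones used for Proposition \ref{propo_estimate_for_mu}), or via $\frac{d}{dr}(r^{-\nu}J_\nu(r))=-r^{-\nu}J_{\nu+1}(r)$ and a uniform bound $|J_\nu(r)|\lesssim r^{-1/3}$. This yields $|\mu(\xi)-g_{1/\sqrt d}(\xi)|\lesssim d^{-\alpha}$ in this range as well. Establishing this uniform-in-$d$ decay of $\mu$ is the step I expect to be the main technical obstacle, because the paper's Lemma \ref{lemma:hyp_gen_thm} does not apply directly to a singular measure.

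In the intermediate range $|\xi|\in[d^{1/2-\alpha},d^{1/2+\alpha}]$ I split the expectation according to whether $||X|-\sqrt d|\ge d^\varepsilon$. On the bad event, the Gaussian concentration bound quoted from \cite[Corollary 1.3]{KlaLeh} gives a contribution $\lesssim e^{-cd^{\varepsilon/2}}$. On the good event, I use $|e^{ia}-e^{ib}|\le|a-b|$ and the computation already carried out in the proof of Proposition \ref{propo:Linfty_diff}:
\[
\mathbb{E}\,\mathbf 1_{\{||X|-\sqrt d|\le d^\varepsilon\}}\Bigl|\tfrac1{|X|}-\tfrac1{\sqrt d}\Bigr||\langle X,\xi\rangle| \lesssim d^{1/2+\alpha}\,\frac{d^{1/2+\varepsilon}}{d^{3/2}-d^{1+\varepsilon}}\lesssim d^{-1/2+\alpha+\varepsilon}.
\]
Combining the three ranges gives $\lesssim d^{-\alpha}+d^{-1/2+\alpha+\varepsilon}+e^{-cd^{\varepsilon/2}}$. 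Choosing $\alpha=1/4$ and renaming $\varepsilon$ gives \eqref{eq: g-mu}.
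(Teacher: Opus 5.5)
Your proposal is correct and is essentially the paper's proof. Off the window $[d^{1/2-\beta},d^{1/2+\beta}]$ the paper uses \eqref{eq:estimates_for_mu} and \eqref{eq:pointwise_mult1}; inside it, it uses the Gaussian representation of $\mu$, a concentration split and the Lipschitz bound for $e^{i\theta}$, then takes $\beta=1/4$. Your second-moment bound $|\mu(\xi)-1|\le 2\pi|\xi|/\sqrt d$ is just a more elementary substitute for part of \eqref{eq:estimates_for_mu}.

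For $|\xi|\ge d^{1/2+\alpha}$, simply cite Proposition \ref{propo_estimate_for_mu}, whose proof does not use the present statement. Drop the other routes you float. Weak limits of log-concave measures are log-concave, so the surface measure on $\mathbb{S}^{d-1}$ is not such a limit. A bound $|J_\nu(r)|\lesssim r^{-1/3}$ alone only controls $|\xi|\gtrsim d$, not $\sqrt d\ll|\xi|\ll d$.
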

To prove the proposition, we make use of the following identity for $\mu$.
\begin{lemma}
    \label{lem: identity_for_mu,g} The following pointwise identity holds for $\xi \in \R^d$
\begin{align}
\label{lemma_identity_for_mu}
   \mu(\xi) = \mathop{\mathbb{E}}_{X\sim \mathcal{N}(0,\textrm{I}_d)} e^{-2\pi i \langle \tfrac{X}{|X|}, \xi\rangle}~.
\end{align}
\end{lemma}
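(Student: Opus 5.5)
The identity is the statement that the normalized surface measure on $\mathbb{S}^{d-1}$ is the law of $X/|X|$ for a standard Gaussian $X\sim\mathcal N(0,\mathrm I_d)$. The plan is to rewrite $\mu(\xi)$ as an expectation over a uniformly distributed point of the sphere, and then realize that uniform point as the direction of a Gaussian vector. This is the same device already used in the proof of \eqref{eq:rewritehatA}, specialized to $|y|=1$.

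\textbf{Step 1.} By definition,
\begin{equation*}
\mu(\xi)=\frac{1}{\sigma(\mathbb{S}^{d-1})}\int_{\mathbb{S}^{d-1}}e^{-2\pi i\langle x,\xi\rangle}\,\dd\sigma(x)=\mathop{\mathbb{E}}_{U\sim\mathrm{Unif}(\mathbb{S}^{d-1})}e^{-2\pi i\langle U,\xi\rangle}.
\end{equation*}
Here $\mathrm{Unif}(\mathbb{S}^{d-1})$ denotes the normalized surface measure $\sigma/\sigma(\mathbb{S}^{d-1})$.

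\textbf{Step 2.} We show that $X/|X|$ has law $\mathrm{Unif}(\mathbb{S}^{d-1})$. Note that $X\neq0$ almost surely, so $X/|X|$ is well defined. The density $(2\pi)^{-d/2}e^{-|x|^2/2}$ is invariant under $O(d)$, hence so is the law of $X/|X|$. The normalized surface measure is the unique rotation-invariant Borel probability measure on the sphere, so the two laws agree. For a hands-on verification, integrate in polar coordinates $x=r\omega$, $\dd x=r^{d-1}\dd r\,\dd\sigma(\omega)$. For a bounded measurable $F$ on the sphere this gives
\begin{equation*}
\mathbb{E}\,F\bigl(\tfrac{X}{|X|}\bigr)=(2\pi)^{-d/2}\int_0^\infty r^{d-1}e^{-r^2/2}\,\dd r\int_{\mathbb{S}^{d-1}}F(\omega)\,\dd\sigma(\omega).
\end{equation*}
Taking $F\equiv1$ identifies the constant in front as $1/\sigma(\mathbb{S}^{d-1})$.

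\textbf{Step 3.} Apply Step 2 with $F(\omega)=e^{-2\pi i\langle\omega,\xi\rangle}$ and combine with Step 1 to obtain \eqref{lemma_identity_for_mu}.

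None of these steps presents a real obstacle. The only point needing justification is the uniqueness of the rotation-invariant probability measure on the sphere, and the polar-coordinates computation above avoids it entirely.
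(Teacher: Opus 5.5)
Your proposal is correct and follows essentially the same route as the paper: the paper's proof is exactly the polar-coordinates computation $\mathbb{E}\,e^{-2\pi i\langle X/|X|,\xi\rangle}=(2\pi)^{-d/2}\int_0^\infty\bigl(\int_{\mathbb{S}^{d-1}}e^{-2\pi i\langle\omega,\xi\rangle}\,\dd\sigma(\omega)\bigr)e^{-r^2/2}r^{d-1}\,\dd r$. It normalizes the radial factor via $\sigma(\mathbb{S}^{d-1})(2\pi)^{-d/2}\int_0^\infty e^{-r^2/2}r^{d-1}\,\dd r=1$, which is your Step 2 with $F\equiv 1$, and neither argument needs your alternative rotation-invariance/uniqueness remark.
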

The proof of the lemma is a standard computation that we include for the reader's convenience.
\begin{proof} We compute
\begin{align*} \mathop{\mathbb{E}}_{X\sim \mathcal{N}(0,\textrm{I}_d)} e^{-2\pi i \langle\tfrac{X}{|X|},\xi\rangle} & = \frac{1}{(2\pi)^{d/2}}\int_{\mathbb{R}^d}  e^{-2\pi i \langle \tfrac{x}{|x|},\xi\rangle} e^{-\tfrac{|x|^2}{2}}\dd x \\
& = \frac{1}{(2\pi)^{d/2}}\int_0^\infty \bigg( \int_{\mathbb{S}^{d-1}}  e^{-2\pi i\langle \omega, \xi\rangle} \dd\sigma(\omega)\bigg) e^{-\tfrac{r^2}{2}} r^{d-1}\dd r  \\
&= \sigma(\mathbb{S}^{d-1}) \mu(\xi) \frac{1}{(2\pi)^{d/2}}\int_0^\infty e^{-\tfrac{r^2}{2}} r^{d-1}\dd r \\
&= \mu(\xi) \frac{1}{(2\pi)^{d/2}}\int_{\mathbb{R}^d} e^{-\tfrac{|x|^2}{2}}\dd x \\
&= \mu(\xi)~.
\end{align*}
\end{proof}
We proceed now with the proof of Proposition \ref{propo:pointwise_gauss_sphere}.\\
\textit{Proof of Proposition \ref{propo:pointwise_gauss_sphere}.}
We split the analysis into two cases. First, we consider $\xi\in\mathbb{R}^d$, $|\xi|\notin [d^{1/2-\beta},d^{1/2+\beta}]$ for some $\beta>0$ to be chosen later. In view of the pointwise estimates in  \eqref{eq:estimates_for_mu} and in \eqref{eq:pointwise_mult1} with $\sigma=1/\sqrt{d}$, we have 
\begin{equation}
\label{eq: mu-g small-large xi}
|\mu(\xi)-g_{1/\sqrt{d}}(\xi)|\lesssim  d^{-\beta}~.\end{equation} 

Next, we consider the complementary case $|\xi| \in [d^{\tfrac{1}{2}-\beta}, d^{\tfrac{1}{2}+\beta}]$. In view of \eqref{lemma_identity_for_mu} and \eqref{eq:rewritehatg}, the following identity holds 
\begin{align*}
    |\mu(\xi)-g_{1/\sqrt{d}}(\xi)|= \bigg|\mathop{\mathbb{E}}_{X\sim \mathcal{N}(0,\textrm{I}_d)}\bigg(e^{-2\pi i \langle\tfrac{X}{|X|}, \xi\rangle} - e^{- 2\pi i \langle\tfrac{X}{\sqrt{d}}, \xi\rangle} \bigg) \bigg|~.
\end{align*}
We further split the expectation above over the two complementary events $||X|^2-d|\leq 4d^{1/2+\varepsilon}$ and $||X|^2-d| > 4d^{1/2+\varepsilon}$, for some $\varepsilon\in(0,1/2)$ to be chosen later. To deal with the second case, we rely on the following thin-shell concentration inequality 
\begin{equation}
\label{eq: concentration}
\mathbb{P}(|X|^2\in [d-2d^{1/2+\varepsilon}, d+ 2d^{1/2+\varepsilon} + 2d^{\varepsilon}])\geq 1- e^{-d^\varepsilon}.\end{equation}
This inequality is a consequence of Lemma 1 and the comment following it in \cite{LaMa} with $a_i=1,$ $i=1,\ldots,D,$ and $x=d^{\varepsilon}$. Inequality \eqref{eq: concentration} can be also seen as a variant of the thin-shell bound  \cite[Corollary 1.3]{KlaLeh} for the case of Gaussian random vectors. Using \eqref{eq: concentration} we obtain
$$\bigg|\mathop{\mathbb{E}}_{X\sim \mathcal{N}(0,\textrm{I}_d)} \mathbf{1}_{\lbrace ||X|^2-d|\, \geq \, 4d^{1/2+\varepsilon} \rbrace}\big(e^{-2\pi i \langle\tfrac{X}{|X|}, \xi\rangle} - e^{- 2\pi i\langle\tfrac{X}{\sqrt{d}}, \xi\rangle} \big) \bigg| \leq 2 e^{-d^\varepsilon} ~.$$
Hence, we are left to bound 
\begin{align*}
    \bigg|\mathop{\mathbb{E}}_{X\sim \mathcal{N}(0,\textrm{I}_d)} & \mathbf{1}_{\lbrace ||X|^2-d|\, \leq \, 4d^{1/2+\varepsilon} \rbrace}\big(e^{-2\pi i \langle\tfrac{X}{|X|}, \xi\rangle} - e^{- 2\pi i\langle \tfrac{X}{\sqrt{d}}, \xi\rangle} \big) \bigg| \\
    & \lesssim \mathop{\mathbb{E}}_{X\sim \mathcal{N}(0,\textrm{I}_d)} \mathbf{1}_{\lbrace ||X|^2-d|\, \leq \, 4d^{1/2+\varepsilon} \rbrace} \big|\tfrac{1}{|X|}-\tfrac{1}{\sqrt{d}}\big| |\langle X,\xi\rangle| \\
    &= \mathop{\mathbb{E}}_{X\sim \mathcal{N}(0,\textrm{I}_d)} \mathbf{1}_{\lbrace ||X|^2-d|\, \leq \, 4d^{1/2+\varepsilon} \rbrace} \tfrac{||X|^2-d|}{|X|^2\sqrt{d}+|X|d} |\langle X,\xi\rangle| \\
    & \lesssim \mathop{\mathbb{E}}_{X\sim \mathcal{N}(0,\textrm{I}_d)}\tfrac{d^{1/2+\varepsilon}}{d^{3/2}} |\langle X,\xi\rangle| \\
    & =d^{-1+\varepsilon}|\xi| \mathop{\mathbb{E}}_{{X_1} \sim \mathcal{N}(0,1)} |{X_1}|\\
    & \lesssim d^{-1/2+\varepsilon+\beta}
\end{align*}
 where we have used invariance under rotation of $|X|$ to pass to the second equality and the fact that, by hypothesis, $|\xi| \leq d^{\tfrac{1}{2}+\beta}$ to pass to the last line. Therefore, we justified that in the case $|\xi|\in [d^{1/2-\beta},d^{1/2+\beta}]$ it holds
 \begin{equation*}
|\mu(\xi)-g_{1/\sqrt{d}}(\xi)|\lesssim e^{-d^{\varepsilon}}+d^{-1/2+\varepsilon+\beta}~.\end{equation*} 
In view of the above inequality and \eqref{eq: mu-g small-large xi}, choosing $\beta=1/4$ and $\varepsilon\to 0^+$ we obtain \eqref{eq: g-mu}.

\qed

In order to apply Theorem \ref{general_thm}, we need to verify that the multiplier $(\mu-g)$ satisfies the hypothesis \eqref{eq_general_thm_pointwise_estimate_multiplier_1}, \eqref{eq_general_thm_pointwise_estimate_multiplier_gradient} . This will be an immediate consequence of the following pointwise estimates for $\mu$, combined with the analogous estimates for $g$ that follow, for example, from Lemma \ref{lemma:hyp_gen_thm} by choosing $\sigma=1/\sqrt{d}$. 

\begin{proposition}\label{propo_estimate_for_mu}
For all $d\geq 3$, the multiplier  $\mu$ satisfies the estimates
\begin{equation}\label{eq:estimates_for_mu}
    |\mu(\xi)-1|\lesssim \frac{|\xi|}{\sqrt{d}}~, \qquad |\mu(\xi)|\lesssim \frac{\sqrt{d}}{|\xi|},\qquad
 |\langle \xi, \nabla \mu(\xi) \rangle | \lesssim 1,
\end{equation}
for a.e.\ $\xi\in \R^d.$
\end{proposition}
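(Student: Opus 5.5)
We work with the Bessel representation $\mu(\xi)=m(|\xi|)$, where, writing $\nu=\tfrac d2-1$ and $r=2\pi|\xi|$,
\[
m(r)=\Gamma(\nu+1)\,(r/2)^{-\nu}J_\nu(r),
\]
and with the probabilistic representation $\mu(\xi)=\mathbb{E}\,e^{-2\pi i\langle U,\xi\rangle}$, $U\sim\mathrm{Unif}(\mathbb{S}^{d-1})$, from Lemma \ref{lem: identity_for_mu,g}. The three estimates in \eqref{eq:estimates_for_mu} are handled separately; the first and third are elementary, and the decay estimate is the real content.

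\emph{First estimate.} Since $\mathbb{E}U=0$, we have $\mu(\xi)-1=\mathbb{E}\big(e^{-2\pi i\langle U,\xi\rangle}-1+2\pi i\langle U,\xi\rangle\big)$. Hence
\[
|\mu(\xi)-1|\le 2\pi^2\,\mathbb{E}\langle U,\xi\rangle^2=2\pi^2|\xi|^2/d .
\]
Alternatively, without using the vanishing mean, $|\mu(\xi)-1|\le 2\pi\,\mathbb{E}|\langle U,\xi\rangle|\le 2\pi|\xi|/\sqrt d$ by Cauchy--Schwarz, which is exactly the claimed bound.

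\emph{Third estimate.} Radiality gives $\langle\xi,\nabla\mu(\xi)\rangle=r\,m'(r)$. The recurrence $\frac{\dd}{\dd r}\big(r^{-\nu}J_\nu(r)\big)=-r^{-\nu}J_{\nu+1}(r)$ yields $r\,m'(r)=-\frac{r}{2\nu+2}\cdot m_{\nu+1}(r)\cdot r/2$, where $m_{\nu+1}$ is the analogous normalized function of order $\nu+1$; the precise normalization will come from the Appendix. It is cleaner to differentiate under the expectation:
\[
\langle\xi,\nabla\mu(\xi)\rangle=-2\pi i\,\mathbb{E}\big[\langle U,\xi\rangle e^{-2\pi i\langle U,\xi\rangle}\big].
\]
For $|\xi|\lesssim\sqrt d$ this is bounded by $2\pi\,\mathbb{E}|\langle U,\xi\rangle|\le 2\pi|\xi|/\sqrt d\lesssim1$. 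For $|\xi|\gtrsim\sqrt d$, we instead use the Bessel form: $r\,m'(r)=-c_\nu\, r^{-\nu+1}J_{\nu+1}(r)$ with $c_\nu=2^\nu\Gamma(\nu+1)$. We then need the uniform bound $\Gamma(\nu+1)2^\nu r^{1-\nu}|J_{\nu+1}(r)|\lesssim 1$ for $r\gtrsim\nu^{1/2}$, which is a bound of the same type as the decay estimate.

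\emph{Second estimate, the main obstacle.} We must show $|m(r)|\lesssim \sqrt{\nu}/r$ uniformly in $\nu$. Two regimes are natural.
\begin{itemize}
\item For $\sqrt\nu\lesssim r\le\nu$, use the product-type bound $|J_\nu(r)|\le (r/2)^\nu/\Gamma(\nu+1)\cdot e^{-r^2/(4(\nu+1))}$ (or a comparable bound from the Appendix). This gives $|m(r)|\le e^{-cr^2/\nu}\lesssim\sqrt\nu/r$.
\item For $r\ge\nu$, use the uniform bound $|J_\nu(r)|\lesssim r^{-1/3}$ (Landau), together with Stirling's formula, $\Gamma(\nu+1)2^\nu r^{-\nu}\lesssim\sqrt\nu\,(2\nu/(er))^\nu$. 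This is tiny when $r\ge\nu$, except in the window $r\approx\nu$. There the factor $(2/e)^\nu$ still makes it exponentially small, so the bound holds comfortably.
\end{itemize}
The delicate point is the first regime: the Gaussian-type upper bound for $J_\nu$ must hold with constants uniform in $\nu$. If the Appendix lemmas do not supply it directly, I would fall back on Poisson's integral, $m(r)=c_\nu\int_{-1}^1 e^{irs}(1-s^2)^{\nu-1/2}\dd s$. The density $(1-s^2)^{\nu-1/2}$ is log-concave with variance $\asymp1/\nu$ and is smooth, so integrating by parts once gives $|m(r)|\le r^{-1}\int|\partial_s\rho|=2\rho(0)/r\asymp\sqrt\nu/r$, since $\rho$ is unimodal. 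This argument is in fact cleaner and uniform, so I would use it as the primary route.

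The same unimodality trick handles the third estimate for large $r$. Indeed, $r\,m'(r)=i r\int s\,e^{irs}\rho(s)\dd s$, and integrating by parts twice controls this by $\int|(s\rho)''|\lesssim\nu\cdot\sqrt\nu/\nu^{1}$ scaled by $r^{-1}$. This gives $\lesssim\sqrt\nu\cdot r^{-1}\cdot\sqrt\nu\lesssim1$ when $r\gtrsim\nu$. On the intermediate range $\sqrt\nu\le r\le\nu$, one integration by parts gives $|r\,m'(r)|\le\int|(s\rho)'|\lesssim1$, since $(s\rho)'$ has boundedly many sign changes and $s\rho$ is of size $\nu^{-1/2}\rho(0)\asymp1$.
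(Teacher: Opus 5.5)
Your argument is correct, but for the decay and derivative bounds it takes a different and more elementary route than the paper. The paper splits $|\xi|$ into regimes and imports Bessel-function input:
\begin{itemize}
\item the bounds of \cite{MSW24}, including $|\mu(\xi)|\lesssim e^{-2\pi|\xi|/\sqrt d}+e^{-\alpha d}$;
\item the bound \eqref{eq_bessel_leq_r_power_minus_one_half} combined with Stirling's formula for $|\xi|\ge d$;
\item the contour-shift estimate \eqref{estimate_as_in_MSW} for $\langle\xi,\nabla\mu(\xi)\rangle$ when $|\xi|\le d$.
\end{itemize}
You instead write $\mu(\xi)=\int_{-1}^{1}e^{-2\pi i|\xi|s}\rho(s)\,\dd s$, where $\rho(s)=c_\nu(1-s^2)^{(d-3)/2}$ is the density of one coordinate of a uniform point on $\mathbb{S}^{d-1}$. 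One integration by parts plus unimodality of $\rho$ then gives $|\mu(\xi)|\le\rho(0)/(\pi|\xi|)\lesssim\sqrt d/|\xi|$ for all $\xi$, with no case analysis and no Bessel asymptotics. Cauchy--Schwarz likewise gives the first bound for all $\xi$.

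In effect you show that $\mu(\xi)=\widehat{\rho}(|\xi|)$, with $\rho$ a symmetric log-concave density on $\R$ of variance $1/d$. Log-concavity is precisely where $d\ge 3$ enters. So the proposition is just the one-dimensional case of Lemma \ref{lemma:hyp_gen_thm} with $\sigma=1/\sqrt d$, which puts $\mS$ in the same framework as the log-concave kernels. The paper's route yields sharper decay information, namely exponential decay in $|\xi|/\sqrt d$ up to $|\xi|\sim d$ and $O(|\xi|^{-1})$ beyond, but Theorem \ref{general_thm} does not need it.

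Your write-up can also be trimmed:
\begin{itemize}
\item The one-step bound $|r\,m'(r)|\le\mathrm{TV}\big(s\rho(s)\mathbf{1}_{[-1,1]}(s)\big)\le 4\max_s|s\rho(s)|\lesssim 1$ is independent of $r$, and it absorbs the boundary terms at $\pm1$ when $d=3$. So the regime split and the second integration by parts are unnecessary.
\item The second integration by parts actually fails as written for $d=4$, where $(s\rho)''$ is not integrable near $\pm1$.
\item The Bessel-recurrence aside can be dropped; note it should read $r\,m'(r)=-\frac{r^2}{2\nu+2}m_{\nu+1}(r)$.
\item The Landau fallback can be dropped too; it is too weak for $d=3$, where $m(r)=\sin r/r$.
\end{itemize}
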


The proof is very close to the analysis in \cite[Section 4]{MSW24}. However, since the uniform estimate for the gradient $ |\langle \xi, \nabla \mu(\xi) \rangle | \lesssim 1$ is only mentioned in \cite{MSW24}  without proof, see \cite[eq.\ (5.20)]{MSW24}, we decided to provide a detailed proof.

\proof
We start by showing that
$$|\mu(\xi)-1|\leq 2\pi^2\frac{|\xi|}{\sqrt{d}}~.$$
We split the analysis into two cases. First, we consider the case $|\xi|\geq \sqrt{d}$. As $|\mu(\xi)|\leq 1$, we easily have that
$$|\mu(\xi)-1|\leq 2  \frac{|\xi|}{\sqrt{d}}~.$$
Let's now consider the case $|\xi|<\sqrt{d}$. It has been shown in \cite[Equation~4.9]{MSW24} that $|\mu(\xi)-1|\leq 2\pi^2 \big( \frac{|\xi|}{\sqrt{d}}\big)^{2}$. Combining this with the fact that, by hypothesis, $|\xi|<\sqrt{d}$ we obtain that
$$|\mu(\xi)-1|\leq  2\pi^2 \frac{|\xi|}{\sqrt{d}}~.$$

Next, we want to show that 
$$|\mu(\xi)| \lesssim \frac{\sqrt{d}}{|\xi|}.$$ 
We split the analysis into three cases. Assume first that $\sqrt{d}\geq |\xi|$. It has been shown in \cite[Equation~4.10]{MSW24} that  $|\mu(\xi)|\lesssim (\sqrt{d}/|\xi|)^{1/2}$. 
As by hypothesis $\sqrt{d}/|\xi|\geq 1$ it immediately follows that in this regime
$$|\mu(\xi)|\lesssim\frac{\sqrt{d}}{|\xi|}~.$$
Next we consider the case $\sqrt{d} < |\xi|<d$. It has been shown in \cite[Lemma~4.1]{MSW24} that there exists a constant $\alpha>0$ independent of the dimension such that for all $d\geq 2$ and all $\xi\in\mathbb{R}^d$ we have
$$|\mu(\xi)|\lesssim  e^{-2\pi|\xi|/\sqrt{d}}+e^{-\alpha d}.$$
Using this estimate, the fact that $e^{-|x|}\leq \tfrac{1}{|x|}$, and the hypothesis $1 < d/|\xi|$ we have 
\begin{align*}
    |\mu(\xi)|& \lesssim e^{-2\pi|\xi|/\sqrt{d}}+e^{-\alpha d}  \\
    & \lesssim \frac{\sqrt{d}}{|\xi|} +\frac{1}{\alpha\sqrt{d}}  \lesssim   \frac{\sqrt{d}}{|\xi|} +\frac{1}{\alpha\sqrt{d}}\frac{d}{|\xi|} \\
    & \lesssim \frac{\sqrt{d}}{|\xi|}~.
\end{align*}
We are left to consider the case $|\xi|\geq d$. Here we use the pointwise bound 
\begin{align}\label{eq_bessel_leq_r_power_minus_one_half}
|J_\nu(r)|\leq r^{-1/2}
\end{align}
which holds for $\nu\geq \tfrac{1}{2}$ and $r\geq 2\nu$, see e.g. \cite[Theorem~3]{Kr14}. This shows that for $d\ge 3$ we have
\begin{align*}
    |\mu(\xi)| & =\frac{\Gamma(\tfrac{d}{2})}{\pi^{d/2-1}}|\xi|^{-\tfrac{d}{2}+1} |J_{\tfrac{d}{2}-1}(2\pi|\xi|)| \leq \frac{\Gamma(\tfrac{d}{2})}{\pi^{d/2-1}}|\xi|^{-\tfrac{d}{2}+\tfrac{1}{2}} \\
    & \leq \frac{\Gamma(\tfrac{d}{2})}{\pi^{d/2-1}}d^{-\tfrac{d}{2}+\tfrac{3}{2}}|\xi|^{-1} \lesssim\frac{1}{|\xi|}\\
    & \lesssim\frac{\sqrt{d}}{|\xi|},
\end{align*}
where in the third inequality we used Stirling's approximation for the Gamma function 
\begin{equation}
\label{eq: Stirling}
\Gamma(s) = \sqrt{2 \pi /s} (s / e)^{s} (1 + O(s^{-1})),\qquad s \to \infty.
\end{equation}

Finally, we are left to show that $|\langle \xi, \nabla \mu(\xi) \rangle|\lesssim 1$ for all $\xi\in\mathbb{R}^d$. 
We split the analysis into two cases. First, we consider the case $r=|\xi|\geq d$. Using identity \eqref{first_identity_rdmudr} together with \eqref{eq_bessel_leq_r_power_minus_one_half} and \eqref{eq: Stirling} we have that for $d\geq 3$ 
\begin{align*}
   \bigg| r\frac{\dd}{\dd r} \mu(r)\bigg|& \leq 2 \frac{\Gamma(\tfrac{d}{2})}{\pi^{d/2-1}}\big(\tfrac{d}{2}-1 \big) r^{-\tfrac{d}{2}+1}\big| J_{\tfrac{d}{2}-1}(2\pi r)\big| + 2\pi \frac{\Gamma(\tfrac{d}{2})}{\pi^{d/2-1}}r^{-\tfrac{d}{2}+2}\big|J_{\tfrac{d}{2}-2}(2\pi r)\big|\\
   & \leq 2 \frac{\Gamma(\tfrac{d}{2})}{\pi^{d/2-1}}\big(\tfrac{d}{2}-1 \big) r^{-\tfrac{d}{2}+\tfrac{1}{2}} + 2\pi \frac{\Gamma(\tfrac{d}{2})}{\pi^{d/2-1}}r^{-\tfrac{d}{2}+\tfrac{3}{2}}\\
   & \leq 2 \frac{\Gamma(\tfrac{d}{2})}{\pi^{d/2-1}}\big(\tfrac{d}{2}-1 \big) d^{-\tfrac{d}{2}+\tfrac{1}{2}} + 2\pi \frac{\Gamma(\tfrac{d}{2})}{\pi^{d/2-1}}d^{-\tfrac{d}{2}+\tfrac{3}{2}}\\
   & \lesssim 1~.
\end{align*}
Next, we consider the case $r=|\xi|\leq d$. This can be verified as outlined in \cite[Remark~5.1]{MSW24}, we include the arguments for completeness. Using identity \eqref{second_identity_rdmudr} together with the estimate \eqref{estimate_as_in_MSW} we obtain 
\begin{align*}
    \bigg| r\frac{\dd}{\dd r} \mu(r)\bigg|& \leq r (2\sqrt{\pi}) \frac{\Gamma(\tfrac{d}{2})}{\Gamma(\tfrac{d}{2}-\tfrac{1}{2})}  \bigg| \int_{-1}^1 e^{i2\pi rs}s(1-s^2)^{(d-3)/2}\dd s \bigg| \\
    & \lesssim  r \frac{\Gamma(\tfrac{d}{2})}{\Gamma(\tfrac{d}{2}-\tfrac{1}{2})}  \bigg(\frac{e^{-2\pi r/\sqrt{d}}}{d}+\frac{e^{-cd}}{\sqrt{d}}\bigg) \\
    & \lesssim r \bigg(\frac{e^{-2\pi r/\sqrt{d}}}{\sqrt{d}}+e^{-d/10}\bigg)\lesssim 1~.
\end{align*}
\qed

\subsection{Proof of Theorem \ref{thm:gaussian_sphere}}
We justify \eqref{eq:thm_spher_diff_lim} first. By an application of Theorem \ref{general_thm} together with Proposition \ref{propo:pointwise_gauss_sphere} we see that for all $d\geq 3$
\begin{align}\label{eq:L2_SminusG}
    \|(\mathcal{S}-\mathcal{G}^{1/\sqrt{d}})\|_{L^2(\mathbb{R}^d)}\lesssim d^{-1/5}~.
\end{align}
Dimension-free estimates for $\mathcal{S}_\ast,\mathcal{G}_\ast$ easily imply that for $q\in (1,\infty)$, $d_0:=\lfloor q/(q-1) \rfloor +1$
\begin{align}\label{eq:Lq_SminusG}
   \sup_{d\geq d_0} \|(\mathcal{S}-\mathcal{G}^{1/\sqrt{d}})\|_{L^q(\mathbb{R}^d)} <\infty~.
\end{align}
Marcinkiewicz interpolation \cite[Theorem 1.3.2]{Gr14} applied to \eqref{eq:L2_SminusG}, \eqref{eq:Lq_SminusG} implies \eqref{eq:thm_spher_diff_lim}.

It remains to justify \eqref{eq:limitS}. It is well known that for each $p>d/(d-1)$ we have $\|\mS_*\|_{L^p(\R^{d+1})}\le \|\mS_*\|_{L^p(\R^{d})},$ see e.g.\ \cite[eq.\ (2.10)]{BMSW21}. In particular, for each $p\in (1,\infty)$ the limit $\lim_{d\to \infty}\|\mS_*\|_{L^p(\R^d)}$ 
exists. Thus, in view of \eqref{eq:thm_spher_diff_lim}
$$\lim_{d\to \infty}\|\mS_*\|_{L^p(\R^d)}=\lim_{d\to \infty}\|\mG_*\|_{L^p(\R^d)} $$
and $\lim_{d\to \infty}\|\mG_*\|_{L^p(\R^d)}=\lm(p)$ by our definition of $\lambda(p)$. This justifies  \eqref{eq:limitS}.

\qed

\begin{remark}
\label{rem: DG}
    A family of $m-$parameters maximal operators $\mathcal{S}^m_{\alpha,\ast}$, with $0\leq \alpha <1$, connecting the ($m-$parameters) Hardy-Littlewood maximal function with the ($m-$parameters) spherical maximal function has been studied by Dosidis and Grafakos in \cite{DG21}. Here, we focus on the one-parameter case $\mathcal{S}_{\alpha,\ast}=\mathcal{S}^1_{\alpha,\ast}$, 
    $$\mathcal{S}_{\alpha,\ast}(f)(x):=\sup_{t>0} \frac{2}{\omega_{d-1}\mathrm{B}(\frac{d}{2},1-\alpha)}\int_{B}|f(x-ty)|(1-|y|)^{-\alpha}\dd y~,$$
    where $\omega_{d-1}$ is the surface area of the unit sphere in $\mathbb{R}^d$, and $\mathrm{B}(a,b)$ is the beta function $\mathrm{B}(a,b):=\int_0^1 t^{a-1}(1-t)^{b-1}\dd t$. It has been shown in \cite{DG21} that, for any $f\in L^1_{loc}(\mathbb{R}^d)$ and $x\in\mathbb{R}^d$, the pointwise estimate
    $$\mB_\ast f(x)\leq S_{\alpha,\ast}f(x)\leq \mS_\ast f(x)$$
    holds.  In view of this fact and as a consequence of our Theorems \ref{thm_rad_logconc} and \ref{thm:gaussian_sphere}, we immediately obtain that also for this family of maximal operators
    $$\lim_{d\rightarrow\infty} \|\mathcal{S}_{\alpha,\ast}\|_{L^p(\mathbb{R}^d)} = \lm(p)~.$$
    
\end{remark}

 \section{Lower bounds for $\lm(p)$ - Proof of Theorem \ref{propo:GausMonotonicity}}\label{sec:lowerBound}
We split the proof of Theorem \ref{propo:GausMonotonicity} into two parts. First, we focus on the monotonicity property for $\|\mG_\ast\|_{L^p(\mathbb{R}^d)}$. 

\begin{proposition}\label{propo2:GausMonotonicity}
    For all $p\in(1,\infty)$ and $d\geq 1$ it holds that
    $$\|\mathcal{G}_\ast\|_{L^p(\mathbb{R}^{d+1})}\geq \| \mathcal{G}_\ast\|_{L^p(\mathbb{R}^d)}~.$$
\end{proposition}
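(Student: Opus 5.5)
The plan is to exploit the tensor-product structure of the Gaussian kernel, $G_t^{(d+1)}(x,s)=G_t^{(d)}(x)\,G_t^{(1)}(s)$ for $(x,s)\in\R^d\times\R$. We test the $(d+1)$-dimensional operator on functions of the form $F(x,s)=f(x)\,\phi_R(s)$. Here $f\in L^p(\R^d)$ is a near-extremizer for $\|\mG_\ast\|_{L^p(\R^d)}$, and $\phi_R(s)=R^{-1/p}\phi(s/R)$ is a wide, slowly varying profile; for concreteness take $\phi=\mathbf{1}_{[-1,1]}$, or a smooth bump. Then
\[
\|F\|_{L^p(\R^{d+1})}=\|f\|_{L^p(\R^d)}\|\phi\|_{L^p(\R)},\qquad \mG_t F(x,s)=\mG_t f(x)\,(\mG_t\phi_R)(s).
\]
We may assume $f\ge 0$, since the kernel is positive and $|\mG_t f|\le \mG_t|f|$.

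First, we reduce to a bounded range of $t$. By monotone convergence, $\sup_{t>0}\mG_t f=\lim_{T\to\infty}\sup_{0<t\le T}\mG_t f$ pointwise, so $\|\sup_{0<t\le T}\mG_t f\|_{L^p}\ge (1-\eta)\|\mG_\ast\|_{L^p(\R^d)}\|f\|_{L^p}$ once $T=T(\eta,f)$ is large.

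Second, we use that $\mG_t\phi_R$ is close to $\phi_R$ for $t\le T$. By rescaling, $(\mG_t\phi_R)(s)=R^{-1/p}(\mG_{t/R}\phi)(s/R)$. Hence for $t\le T$,
\[
\sup_{t>0}\mG_tF(x,s)\ \ge\ \Big(\sup_{0<t\le T}\mG_t f(x)\Big)\cdot R^{-1/p}\inf_{0<\tau\le T/R}(\mG_\tau\phi)(s/R).
\]
We then take the $L^p(\R^{d+1})$ norm. The $x$-integral factors out and gives $(1-\eta)^p\|\mG_\ast\|^p\|f\|_p^p$. After the change of variables $u=s/R$, the $s$-integral is
\[
\int_\R \Big(\inf_{0<\tau\le T/R}\mG_\tau\phi(u)\Big)^p\,\dd u .
\]
As $R\to\infty$ this tends to $\|\phi\|_p^p$. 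For the indicator, $\inf_{\tau\le\delta}\mG_\tau\phi(u)\ge \mG_\delta\phi(u)$ for $|u|<1$, because $\mG_\tau\mathbf 1_{[-1,1]}(u)$ decreases in $\tau$ there. One could also simply apply dominated or monotone convergence.

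Combining these two steps gives $\|\mG_\ast\|_{L^p(\R^{d+1})}\ge (1-\eta)^2\|\mG_\ast\|_{L^p(\R^d)}$, and letting $\eta\to0$ proves the claim. The only delicate point is the interchange between the supremum over $t$ and the product. We avoid it by restricting to the bounded range $t\le T$, which is where the truncation in the first step is needed. The pointwise convergence $\inf_{\tau\le\delta}\mG_\tau\phi\to\phi$ a.e. as $\delta\to 0$ is the main technical obstacle, and it is handled by choosing $\phi$ to be an indicator, or continuous with compact support, so that convergence holds uniformly on compact sets away from the jumps. The same argument works verbatim for any positive, even, tensor-type kernel, consistent with Remark \ref{rem: TensMax}.
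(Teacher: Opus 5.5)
Your argument is correct and follows the paper's proof. Both tensor a nonnegative near-extremizer $f$ with a one-dimensional factor, restrict to $0<t\le T$ by monotone convergence, and use that this factor is almost invariant under $\mG^1_t$ for $t\le T$. The only difference is the choice of factor: the paper takes the truncated homogeneous function $\varphi_N(s)=|s|^{-1/p}\mathbf{1}_{[1,N]}(|s|)$ and uses convexity of $|s|^{-1/p}$ on each half-line, together with evenness of the kernel, to get $\mG^1_t\varphi_N\ge I(M)\,\varphi_{N/2}\mathbf{1}_{[10TM,\infty)}(|s|)$, whereas your $L^p$-normalized dilate $\phi_R$ only needs $\mG_\tau\phi\to\phi$ as $\tau\to 0$, which is slightly simpler and does not use evenness.
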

\proof We start by observing that, by the monotone convergence theorem, it suffices to consider $0<t<T$ for some fixed $T\ge 1$. 
Let $f\in L^p(\mathbb{R}^d)$, $\varphi\in L^1_{loc}$. Without loss of generality, we assume $f$ to be non-negative. We have that
\begin{equation*}
    \mG^{d+1}_t(f\otimes\varphi)(x,s)=\mG_t^df(x)\,\mG_t^1\varphi (s)~,
\end{equation*}
where $(x,s)\in\mathbb{R}^{d+1}$, $x\in\mathbb{R}^d$, $s\in\mathbb{R}$. We set \begin{equation}
\label{eq: phi,phi_N}
\varphi(s)=|s|^{-1/p}\qquad \textrm{and}\qquad  \varphi_N(s):=\varphi(s)\mathbf{1}_{[1,N]}(|s|).
\end{equation}Here $N>2$ is a large parameter which will be taken to infinity in the proof. Note that $\varphi_N, \, \varphi_{\frac{N}{2}}\in L^p(\mathbb{R})$.
Since the kernel of $\mG_t^1$ is even a change of variable gives
\begin{align}
\begin{split}\label{eq:convexity_low_b}
\mG_t^1\varphi_N (s) & = \frac{1}{\sqrt{2\pi t^2}} \int_{-\infty}^\infty \varphi_N(s-y) e^{-y^2/(2t^2)}\dd y\\
& = \frac{1}{\sqrt{2\pi }} \int_0^\infty \left( \varphi_N(s-{t}y)+\varphi_N(s+{t}y)\right) e^{-y^2/2}\dd y\\
& \geq \frac{2}{\sqrt{2\pi }} \int_0^M  \frac{\varphi_N(s-{t}y)+\varphi_N(s+{t}y)}{2} e^{-y^2/2}\dd y~,
\end{split}
\end{align}
for any $M\ge 1$. At this point, we observe that $\varphi|_{(-\infty,0)}$ and $\varphi|_{(0,\infty)}$ are convex. Moreover, if $|s|>{t}y$ then $\text{sign}(s+{t}y)=\text{sign}(s-{t}y)$. In particular, this is guaranteed for $|s|>10{T}M$. Observe also that if $10{T}M<|s|<\tfrac{N}{2}$ then both $|s+{t}y|$ and $|s-{t}y|$ lie between $1$ and $N$. Obviously, here we take $N>20{T}M.$ Therefore, in such a range for $s$ using the convexity of $\varphi$ we obtain
$$\frac{\varphi_N(s-{t}y)+\varphi_N(s+{t}y)}{2}=\frac{\varphi(s-{t}y)+\varphi(s+{t}y)}{2}\geq \varphi(s)~. $$
Plugging this into \eqref{eq:convexity_low_b} we have that for all $0<t<T$
$$\mG_t^1\varphi_N (s) \geq \varphi(s)\mathbf{1}_{[ 10TM,N/2]}(|s|) \frac{2}{\sqrt{2\pi}}\int_0^Me^{-y^2/2}\dd y\geq \varphi_{\frac{N}{2}}(s)\mathbf{1}_{[ 10{T}M,\infty)}(|s|) I(M)~, $$
where $I(M):= \frac{2}{\sqrt{2\pi}}\int_0^Me^{-y^2/2}\dd y$ satisfies $\lim_{M\rightarrow\infty}I(M)=1$. A simple computation gives
\begin{align*}\|\varphi_N\|_{L^p(\mathbb{R})}&= 2^{1/p}(\log(N))^{1/p}, \\ \|\varphi_{\frac{N}{2}}\mathbf{1}_{(-\infty,-10{T}M]\cup[ 10{T}M,\infty)}\|_{L^p(\mathbb{R})}&= 2^{1/p}(\log(N)-\log(2)-\log(10{T}M))^{1/p}~.\end{align*}
Combining all of these together, we have justified that
\begin{align*}
   & \frac{\|\sup_{0<t<T}\mG_t^{d+1}(f \otimes \varphi_N)\|_{L^p(\mathbb{R}^{d+1})}}{\|f\otimes \varphi_N\|_{L^p(\mathbb{R}^{d+1})}}  = 
    \frac{\|\sup_{0<t<T}\mG_t^{d+1}(f\otimes \varphi_N)\|_{L^p(\mathbb{R}^{d+1})}}{\|f\|_{L^p(\mathbb{R}^d)} \| \varphi_N\|_{L^p(\mathbb{R})}}\\
    & \qquad \geq  \frac{\|\sup_{0<t<T} \mG_t^{d}f\|_{L^p(\mathbb{R}^{d})}}{\|f\|_{L^p(\mathbb{R}^d)} } \left( \frac{I(M)2^{1/p}(\log(N)-\log(2)-\log(10{T}M))^{1/p}}{2^{1/p}(\log(N))^{1/p}}\right)~.
\end{align*}
Taking the limit as $N\rightarrow\infty$ we obtain
$$\lim_{N\rightarrow\infty} \frac{\|\sup_{0<t<T}\mG_t^{d+1}(f \otimes \varphi_N)\|_{L^p(\mathbb{R}^{d+1})}}{\|f\otimes \varphi_N\|_{L^p(\mathbb{R}^{d+1})}} \geq \frac{\|\sup_{0<t<T}\mG_t^{d}f\|_{L^p(\mathbb{R}^{d})}}{\|f\|_{L^p(\mathbb{R}^d)} } I(M)~. $$
Finally, we let $M\rightarrow \infty$ and then we conclude by taking the limit for $T\rightarrow\infty$.
\qed 

Next, we compute lower bounds on $\|\mG_\ast\|_{L^p(\mathbb{R})}$ by testing $\mG_\ast$ on two natural examples, a homogeneous radial function and a Gaussian function.

\begin{lemma}
 For all $p\in (1,\infty)$ we have $\| \mathcal{G}_\ast \|_{L^p(\mathbb{R})} \geq \mG_\ast(|x|^{-1/p})(1) $. In particular
    $$\| \mathcal{G}_\ast \|_{L^p(\mathbb{R})} \geq \frac{2^{(p-1)/p}}{\sqrt{2e\pi}} \frac{p}{p-1}~.$$   
\end{lemma}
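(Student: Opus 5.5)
The plan has two parts: first show $\|\mG_*\|_{L^p(\R)}\ge \mG_*(|x|^{-1/p})(1)$ by a homogeneity argument, then bound $\mG_*(|x|^{-1/p})(1)$ from below by evaluating $\mG_t$ at a single well-chosen $t$.

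\emph{First step.} The function $\varphi(s)=|s|^{-1/p}$ is not in $L^p(\R)$, so I truncate it as in \eqref{eq: phi,phi_N}, but symmetrically in scale: $\varphi_{\epsilon,N}=\varphi\mathbf 1_{[\epsilon,N]}(|s|)$. Then $\|\varphi_{\epsilon,N}\|_p^p=2\log(N/\epsilon)$. Since $\varphi$ is homogeneous of degree $-1/p$ and $\mG_t$ commutes with dilations, we have $\mG_*\varphi(s)=|s|^{-1/p}\mG_*\varphi(1)$. This uses $\mG_*\varphi(-s)=\mG_*\varphi(s)$, which holds by evenness.

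Fix a large $T$ and a small $\delta$, and restrict to $\delta<t<T$. For $|s|$ between $\epsilon\cdot R$ and $N/R$, with $R$ large depending on $T$, $\delta$ and the Gaussian tails, we get $\sup_{\delta<t<T}\mG_t\varphi_{\epsilon,N}(s)\ge(1-o(1))\,|s|^{-1/p}\sup_{\delta<t<T}\mG_t\varphi(1)$. Here we must control the truncation error. Gaussian mass of width $t|s|$ escaping to $|y|<\epsilon$ is at most $\int_{|y|<\epsilon}|y|^{-1/p}\,dy/(t|s|)$, which is small relative to $|s|^{-1/p}$ when $|s|\gg\epsilon$. Mass escaping to $|y|>N$ is handled by tails, since $|s|\ll N$. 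Integrating the $p$-th power over this range gives $2\log(N/(\epsilon R^2))$ times the constant. The ratio of logarithms tends to $1$ as $N/\epsilon\to\infty$. Finally let $T\to\infty$ and $\delta\to0$, using monotone convergence.

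\emph{Second step.} Here I compute $\mG_t\varphi(1)=\frac{1}{\sqrt{2\pi}t}\int|y|^{-1/p}e^{-(1-y)^2/(2t^2)}\,dy$ and choose a convenient $t$. Discard $y<0$ and bound $e^{-(1-y)^2/(2t^2)}\ge e^{-1/(2t^2)}e^{-y^2/(2t^2)}\,e^{y/t^2}\ge e^{-1/(2t^2)}e^{-y^2/(2t^2)}$ for $y>0$. This gives
\[
\mG_t\varphi(1)\ge \frac{e^{-1/(2t^2)}}{\sqrt{2\pi}t}\int_0^\infty y^{-1/p}e^{-y^2/(2t^2)}\,dy .
\]
A cruder but explicit route is to take $t=1$ and keep only $y\in(0,1)$. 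There $(1-y)^2\le1$, so the integrand is at least $e^{-1/2}y^{-1/p}$. This yields $\mG_1\varphi(1)\ge \frac{1}{\sqrt{2\pi e}}\int_0^1y^{-1/p}\,dy=\frac{1}{\sqrt{2\pi e}}\frac{p}{p-1}$.

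To get the extra factor $2^{(p-1)/p}$, use the scale $t$ with the interval $(0,2)$. Take $t=1$ and keep $y\in(0,2)$, where $(1-y)^2\le1$ still holds. Then $\int_0^2y^{-1/p}\,dy=2^{(p-1)/p}\frac{p}{p-1}$, so
\[
\mG_1\varphi(1)\ge\frac{2^{(p-1)/p}}{\sqrt{2e\pi}}\frac{p}{p-1},
\]
which is exactly the claimed bound.

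The main obstacle is the truncation bookkeeping in the first step. Specifically, we need the near-origin truncation error to be negligible uniformly in $t\in(\delta,T)$. This should follow because $|y|^{-1/p}$ is locally integrable, so that mass is $O(\epsilon^{1-1/p})$ compared with the main term $|s|^{-1/p}$, and $|s|\ge R\epsilon$.
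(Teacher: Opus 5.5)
Your argument is correct and follows the paper's route. The paper also truncates $|x|^{-1/p}$ and appeals to a standard approximation argument, which you write out (your two-sided truncation is equivalent by dilation invariance). Its explicit bound is your choice $t=1$ restricted to the window where the Gaussian factor is at least $e^{-1/2}$, since $\int_{-1}^{1}|1-y|^{-1/p}\,dy=\int_0^2 y^{-1/p}\,dy$.

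One fix in your first step: the window $(\delta,T)$ must be imposed on the rescaled parameter, so you should bound $\sup_{\delta<\tau<T}\mG_{\tau|s|}\varphi_{\epsilon,N}(s)$, as your phrase ``width $t|s|$'' suggests you intend. Read literally, $\sup_{\delta<t<T}\mG_t\varphi(s)=|s|^{-1/p}\sup_{\delta/|s|<\tau<T/|s|}\mG_\tau\varphi(1)\approx|s|^{-1/p}$ for $|s|\gg T$, so the displayed inequality as written fails on that part of the range.
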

\proof It is easy to see that for $\varphi$ given by \eqref{eq: phi,phi_N} we have
$\mG_\ast(\varphi)(x)=\mG_\ast(\varphi)(1)|x|^{-1/p}$, $x\in \R.$ By standard approximation arguments (see e.g. \cite[Proof~of~Thm.~3.2]{DSS}), one also gets that $$\lim_{N\rightarrow\infty}\|\mG_\ast \varphi_N\|_{L^p(\mathbb{R})}/\|\varphi_N\|_{L^p(\mathbb{R})}\geq  \mG_\ast(|x|^{-1/p})(1)~.$$ 
A simple lower bound for $ \mG_\ast(|x|^{-1/p})(1)$ is 
\begin{align*}
 \mG_\ast(|x|^{-1/p})(1)&= \sup_{t>0}\frac{1}{\sqrt{2\pi t^2}}\int_{-\infty}^\infty |1-y|^{-1/p}e^{-y^2/(2t^2)}\dd y \\
    &\geq \sup_{t>0}\frac{1}{\sqrt{2\pi t^2}} e^{-1/(2t^2)}\int_{-1}^1 |1-y|^{-1/p} \dd y\\
    & \geq \frac{1}{\sqrt{2\pi e}} \frac{p}{p-1} 2^{(p-1)/p}~.
\end{align*}
\qed 

A better explicit lower bound is provided by the next example.

\begin{lemma}
     For all $p\in (1,\infty)$ we have
    $$\| \mathcal{G}_\ast \|_{L^p(\mathbb{R})} \geq  \left(\frac{2p}{\pi e^p}\right)^{1/(2p)} \left(\frac{p}{p-1}\right)^{1/p}~.$$   
\end{lemma}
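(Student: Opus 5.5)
The idea is to test $\mG_\ast=\mG^1_\ast$ on the single function $f(x)=e^{-x^2/2}$. For this input $\mG_tf$ is again an explicit Gaussian, so the supremum over $t>0$ can be computed by hand. By scale invariance of the operator norm the width of the test Gaussian is irrelevant, so this one choice suffices.

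\textbf{Step 1: compute $\mG_t f$.} Write $f=\sqrt{2\pi}\,G_1$, where $G_1$ is the standard Gaussian density. Since $G_1\ast G_t=G_{\sqrt{1+t^2}}$, we get
\[
\mG_tf(x)=(1+t^2)^{-1/2}e^{-x^2/(2(1+t^2))}.
\]

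\textbf{Step 2: take the supremum in $t$.} Put $u=1+t^2\in(1,\infty)$ and maximize $h(u)=u^{-1/2}e^{-x^2/(2u)}$. Its logarithmic derivative is $-\frac{1}{2u}+\frac{x^2}{2u^2}$, which vanishes at $u=x^2$. Hence:
\begin{itemize}
\item for $|x|\ge 1$ the supremum is attained at $u=x^2$ and equals $e^{-1/2}|x|^{-1}$;
\item for $|x|<1$, $h$ is decreasing on $u\ge1$, so the supremum is the limit $u\to1^+$ (i.e.\ $t\to0$), which gives $e^{-x^2/2}$.
\end{itemize}
For the lower bound we only need $\mG_\ast f(x)\ge e^{-1/2}|x|^{-1}$ for $|x|\ge1$, and $\mG_\ast f(x)\ge e^{-x^2/2}\ge e^{-1/2}$ for $|x|<1$.

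\textbf{Step 3: compare $L^p$ norms.} On the outer region,
\[
\int_{|x|\ge1}e^{-p/2}|x|^{-p}\,dx=\frac{2e^{-p/2}}{p-1},
\]
and the inner region contributes at least $2e^{-p/2}$. Adding the two,
\[
\|\mG_\ast f\|_p^p\ge 2e^{-p/2}\Big(1+\frac{1}{p-1}\Big)=2e^{-p/2}\frac{p}{p-1}.
\]
Meanwhile $\|f\|_p^p=\int e^{-px^2/2}\,dx=\sqrt{2\pi/p}$. Therefore
\[
\frac{\|\mG_\ast f\|_p^p}{\|f\|_p^p}\ge \sqrt{\frac{2p}{\pi}}\,e^{-p/2}\,\frac{p}{p-1}=\Big(\frac{2p}{\pi e^p}\Big)^{1/2}\frac{p}{p-1}.
\]
Taking $p$-th roots gives the claimed bound.

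There is no serious obstacle; the computation is elementary. The only point that needs care is to keep the contribution of $|x|<1$. Using the outer region alone produces the factor $1/(p-1)$ instead of $p/(p-1)$, which loses a factor $p$ and falls short of the stated constant.
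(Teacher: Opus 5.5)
Your proposal is correct and is essentially the paper's proof. The paper tests on the normalized Gaussian $\gamma(x)=(2\pi)^{-1/2}e^{-x^2/2}$, which differs from your $f$ only by a constant factor that cancels in the ratio. It then maximizes over $u=1+t^2$ exactly as you do and bounds the contribution of $|x|\le 1$ from below by $2e^{-p/2}$ (up to the same normalization), which gives the factor $p/(p-1)$.
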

\proof 
Set $\gamma(x)=\frac{1}{\sqrt{2\pi }}e^{-x^2/2}.$ Then
$$\sup_{t>0}\mG_t(\gamma)(x)=\sup_{t>0} \frac{1}{\sqrt{2\pi(1+t^2)}}e^{-x^2/(2(1+t^2))}~.$$
We set $u:=1+t^2$ and the condition $t>0$ translates into $u>1$. We define $h(u):=\frac{1}{\sqrt{2\pi u}}e^{-x^2/(2u)}$. Then we have $h'(u)=h(u)(-\frac{1}{2u}+\frac{x^2}{2u^2})$, $h'({x^2})=0,$ and $h''({x^2})<0$. Note that ${x^2}>1$ for $|x|>1$. Therefore, for $|x|>1$ it holds
$$\sup_{u>1} h(u)= \frac{1}{\sqrt{2\pi |x|^2}}e^{-1/2}~,$$
while for $|x|\leq 1$ we have
$$\sup_{u>1} h(u)= \frac{1}{\sqrt{2\pi }}e^{-x^2/2}~.$$
Using these facts, we obtain that
\begin{align*}
\|\mG_*(\gamma)\|_{L^p(\mathbb{R})}^p &= \frac{1}{(2\pi )^{p/2}}\int_{-1}^{1} e^{-x^2p/2}\dd x + \frac{2}{(2\pi e)^{p/2}}\int_{1}^\infty x^{-p}\dd x\\
& \geq \frac{2}{(2\pi  e)^{p/2}} \left( 1 + \frac{1}{p-1}\right)~.
\end{align*}
Combining with $\|\gamma\|_{L^p(\mathbb{R})}^p = \frac{1}{(2\pi )^{p/2}}\sqrt{\frac{2\pi }{p}}$, leads to
$$\|\mG_\ast\|_{L^p(\mathbb{R})}^p \geq \sqrt{\frac{2p}{\pi e^p}}\left( \frac{p}{p-1} \right)~.$$
\qed

 For $p\in(1,\infty)$ we set \[c_\sharp(p):=\left(\frac{2p}{\pi e^p}\right)^{1/(2p)} \left(\frac{p}{p-1}\right)^{1/p} \frac{p-1}{p}\]
and let
\[
c=\inf_{p\in (1,\infty)}c_\sharp(p).
\]
It follows from the previous lemma that 
$$\|\mG_\ast\|_{L^p(\mathbb{R})}\geq \max \left\lbrace \ c \frac{p}{p-1} , 1 \right\rbrace~.$$
This, combined with Proposition \ref{propo2:GausMonotonicity}, concludes the proof of Theorem \ref{propo:GausMonotonicity} apart from the lower bound $c>0.4$ which is the content of the lemma below. 
\begin{lemma}
    \label{lem: c>0.4}
The constant $c$ is larger than $0.4.$
\end{lemma}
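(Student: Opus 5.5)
Substituting $s=1/p\in(0,1)$, a direct simplification of the definition gives
\[
c_\sharp(p)=\Big(\frac{2p}{\pi}\Big)^{1/(2p)}e^{-1/2}\Big(1-\frac1p\Big)^{1-1/p},
\qquad
F(s):=\log c_\sharp(1/s)=A(s)+B(s)-\tfrac12 ,
\]
where
\[
A(s)=\frac s2\log\frac{2}{\pi s},\qquad B(s)=(1-s)\log(1-s).
\]
The claim $c>0.4$ is then equivalent to $\inf_{s\in(0,1)}F(s)>\log 0.4\approx-0.91629$. I will prove this lower bound for $F$ on $(0,1)$.

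First I would do a rough numerical scan to locate the minimum. At the endpoints, $F(s)\to-\tfrac12$ as $s\to0^+$ (that is, $c_\sharp\to e^{-1/2}\approx0.607$), and $F(s)\to\tfrac12\log\frac2\pi-\tfrac12$ as $s\to1^-$ (that is, $c_\sharp\to\sqrt{2/\pi}\,e^{-1/2}\approx0.484$). At interior points,
\[
F(0.5)\approx-0.786,\qquad F(0.7)\approx-0.894,\qquad F(0.8)\approx-0.913,\qquad F(0.85)\approx-0.907.
\]
So the minimum lies near $s\approx0.8$, that is $p\approx1.25$, with value about $-0.9135$, which corresponds to $c\approx0.401$. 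The margin over $\log0.4$ is only about $3\cdot10^{-3}$, so a crude argument will not do and the estimate has to be rigorous but fairly fine near the minimum.

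To make the estimate rigorous I would exploit convexity. Since $A''(s)=-\frac1{2s}<0$, $A$ is concave. Since $B''(s)=\frac1{1-s}>0$, $B$ is convex. Hence on any subinterval $[a,b]\subset(0,1)$:
\begin{itemize}
\item $A$ lies above its chord between $(a,A(a))$ and $(b,A(b))$;
\item $B$ lies above its tangent line at the midpoint $m=\frac{a+b}2$, and $B'(s)=-\log(1-s)-1$ is explicit.
\end{itemize}
Thus $F\ge L_{[a,b]}$ on $[a,b]$, where $L_{[a,b]}$ is affine. Its minimum on $[a,b]$ is attained at an endpoint, so it reduces to finitely many explicit evaluations of logarithms. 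The error of this linear lower bound is $O((b-a)^2)$, controlled by $\sup|A''|$ and $\sup|B''|$ on $[a,b]$. Near $s\approx0.8$ these are of size about $1$ and $5$, so intervals of length about $0.02$ keep the error well below $10^{-3}$.

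Away from the minimum I would use coarser intervals. On $[0,0.6]$ I would use the monotone bounds $A\ge\frac s2\log\frac{2}{\pi\cdot0.6}$ and $B\ge$ its tangent, which already give $F\ge-0.87$ or so. Near $s=1$, say on $[0.95,1)$, I would use $(1-s)\log(1-s)\ge-0.15$ and $A\ge\frac12\log\frac2\pi$, giving $F\ge-0.876$.

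The main obstacle is the narrow margin near $s\approx0.8$, that is $p\approx1.25$. There the partition must be fine enough, and the logarithm values must be bounded rigorously from the correct side (for instance using rational enclosures of $\log 2$, $\log\pi$, $\log 5$). I expect roughly a dozen intervals on $[0.6,0.95]$ to suffice. Their endpoint checks can be tabulated, which completes the proof that $c=\inf_p c_\sharp(p)>0.4$.
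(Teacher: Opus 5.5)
Your proposal is correct, but it takes a genuinely different route from the paper.

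\emph{The paper's route.} It does a critical-point analysis in the variable $x=(p-1)/p=1-s$, setting $h(x)=F(1-x)+\tfrac12$. It notes that $h''(x)=(1-\tfrac{3x}{2})/(x(1-x))$ changes sign only at $x=2/3$. Hence $h'=0$ has exactly two roots $x_1<x_2$, and $\inf h=h(x_1)$. It then brackets $x_1\in(0.198,0.2)$ via the explicit equation $x\sqrt{1-x}=e^{-3/2}\sqrt{2/\pi}$. One monotonicity estimate of the two summands on that bracket gives $h(x_1)>-0.415>\log 0.4+\tfrac12$.

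\emph{Your route.} You never locate the minimizer. You split $F=A+B-\tfrac12$ into a concave and a convex part and use chords and tangents, which gives affine minorants minimized at interval endpoints. Everything then reduces to finitely many logarithm evaluations, with rigor built in.

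\emph{What each buys.} The price of your approach is more arithmetic and a table still to be written. For the record, the breakpoints $0.6,0.7,0.75,0.78,0.8,0.82,0.85,0.9,0.95$ already suffice: the endpoint loss $\tfrac{(b-a)^2}{8}\sup_{[a,b]}\tfrac{1}{1-s}$ leaves a margin of about $2.7\cdot 10^{-3}$ over $\log 0.4$. The paper needs only a single localized evaluation, but it must do the sign bookkeeping by hand. In fact its last step takes the prefactor $\frac{1-x}{2}$ at $x=0.2$, although $\log\frac{2}{\pi(1-x)}<0$ there, so $x=0.198$ is the correct choice. This is harmless, since the corrected bound $\approx-0.4145$ still beats $-0.415$. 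Your affine-minorant scheme handles such signs automatically.

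\emph{One detail to make explicit.} On $(0,0.6]$ the tangent to $B$ must be taken near the minimizer of $B$, for instance the horizontal tangent $B\ge -1/e$ at $s=1-1/e$; this is what yields your $-0.87$. The midpoint tangent at $s=0.3$ would only give about $-0.925$ at $s=0.6$, which is below $\log 0.4$.
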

\proof[Proof (sketch)]
The proof is standard, however, we provide details for the convenience of the reader. Denoting $x=(p-1)/p,$ $x\in (0,1)$ our goal is to show that
\[\inf_{x\in (0,1)} \bigg(\frac{2}{\pi(1-x)}\bigg)^{(1-x)/2}x^x>0.4\sqrt{e}.\]
This will follow if we show that the infimum $h_{\rm inf}$ of the function 
\[h(x)=\frac{(1-x)}{2}\big(\log (2/\pi)-\log(1-x))+x\log x,\qquad x\in (0,1)\]
is larger than $\log(0.4)+1/2.$ Since
\[h'(x)=\frac{1}{2}\log(1-x)+\log x+\frac{3}{2}-\frac{1}{2}\log(2/\pi)\]
we see that $h'(x)=0$ if and only if  $x\sqrt{1-x}=e^{-3/2}(2/\pi)^{1/2}$ which has two solutions $0<x_1<x_2<1.$ Furthermore $h''(x)=(1-3x/2)/(x(1-x))$ is positive for $x\in(0,2/3)$ and negative for $x\in (2/3,1)$ and thus $h$ achieves its infimum in $x_1,$ i.e.\ $h_{\rm inf}=h(x_1).$ One may also verify that $x_1\in (0.198,0.2)$ and thus using the fact that $x \log x$ decreases on $(0,0.3)$ we have 
\begin{align*}
h_{\rm inf}&= h(x_1)>\frac{(1-0.2)}{2}\big(\log (2/\pi)-\log(1-0.198))+0.2\log 0.2\\
& > -0.415>\log(0.4)+1/2
\end{align*}
as desired.
\qed

\section{ Concluding remarks - testing on radial functions}\label{sec:ConcludingRemarks}

We conclude by discussing three natural examples of testing functions previously considered in the literature. Namely, we consider as input a homogeneous radial function, the characteristic function of the unit ball, and a Gaussian function, and we study the behavior, as $d\rightarrow\infty$, of the norm on $L^p(\mathbb{R}^d)$ of the operators $\mathcal{S}_\ast(f),\,\mathcal{G}_\ast(f),\,\mathcal{B}_\ast(f)$ applied to these functions. Note that by Theorems \ref{thm_rad_logconc} and \ref{thm:gaussian_sphere} this behavior must be the same for any maximal function associated with a radial log-concave density. These three examples seem to suggest that the limit $\lm(p)$ may be equal to one when restricting to radial inputs only which we leave as Question \eqref{eq: limrad 1}.

Our first example is the function \[\varphi_d(x)=|x|^{-d/p},\qquad x\in \R^d\setminus\{0\}.\] This function is of interest because it is an eigenfunction of $\mathcal{B}_\ast$ (as well as of $\mS_\ast$, $\mG_\ast$, and, more in general, of any maximal function $\mK_\ast$ associated with a radial kernel). In fact, $\mathcal{B}_\ast \varphi_d (x)= b_{p,d} \varphi_d(x)$ with $b_{p,d}:=\mathcal{B}_\ast(|x|^{-d/p})(e_1)$ and $e_1:=(1,0,...,0)$. It has been shown in \cite[Eq.~3.1]{CG1} and  \cite[Theorem~3.2]{DSS} that  $\| \mathcal{B}_\ast \|_{L^p(\mathbb{R}^d)} \geq b_{p,d}$. Moreover, it has been shown in \cite[Lemma~3.4]{DSS} that $b_{p,d}>1$ whenever $p<\tfrac{d}{d-2}$ while it has been suggested in \cite[p.~478]{DSS} that $b_{p,d}$ may be identically one whenever $p\geq \tfrac{d}{d-2}$. In our analysis we will focus on $s_{p,d}:=\mS_*(|x|^{-d/p})(e_1),$ which is clearly larger than or equal to $b_{p,d}.$ Integration in polar coordinates shows that
\[
s_{p,d}=\sup_{r>0}
\frac{\omega_{d-2}}{\omega_{d-1}} \int_{-1}^{1} (1 - 2r x + r^2)^{-\frac{d}{2p}} (1 - x^2)^{\frac{d - 3}{2}} \, dx ~,
\]
where the symbol $\omega_{d-1}$ above denotes the surface area of the unit sphere in $\R^d.$
Let $p\in (1,\infty)$ be fixed and denote by $d_\sharp(p)$ the smallest $d\in\mathbb{N}$ for which $p\geq \frac{d}{d-2}$ holds.
\begin{lemma}
    Let $p\in (1,\infty)$ be fixed and take $d\geq \max \lbrace 3, d_\sharp(p) \rbrace.$ Then $s_{p,d}=1$.
\end{lemma}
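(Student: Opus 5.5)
The plan is to recognize $\mS_r(|x|^{-d/p})(e_1)$ as a spherical mean of the function $u(x)=|x|^{-d/p}$. For $d\ge 3$ and $\frac{d}{p}\le d-2$, that is $p\ge \frac{d}{d-2}$, this function is superharmonic on $\R^d\setminus\{0\}$. Indeed, for a radial power $|x|^{-a}$ we have
\[
\Delta |x|^{-a}=a(a-d+2)|x|^{-a-2},
\]
which is $\le 0$ exactly when $0<a\le d-2$. Moreover, at the origin $u$ is a positive multiple of (at most) the Newtonian singularity $|x|^{2-d}$. Hence $u$ is superharmonic on all of $\R^d$ in the distributional sense: it is locally integrable, lower semicontinuous with value $+\infty$ at $0$, and its distributional Laplacian is a nonpositive measure.

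The upper bound $s_{p,d}\le 1$ follows from the sub-mean-value property for superharmonic functions on all of $\R^d$. This is the property $\mS_r u(x)\le u(x)$ for every $x$ and $r>0$, so $\mS_r u(e_1)\le u(e_1)=1$ for all $r>0$.

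For the reverse inequality I would let $r\to 0^+$: by continuity of $u$ near $e_1$, $\mS_r u(e_1)\to 1$. Thus $s_{p,d}=\sup_{r>0}\mS_ru(e_1)=1$.

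The only step needing care is justifying superharmonicity across the singularity at $0$ when $r>1$, where the sphere $\{e_1+r\omega\}$ encloses the origin. I would handle this directly via the classical formula for spherical means of radial functions: for $|y|\ne r$,
\[
\mS_r(|\cdot|^{2-d})(y)=\min(|y|,r)^{2-d}.
\]
Writing $u=v\circ|\cdot|^{2-d}$ with $v(s)=s^{a/(d-2)}$, $a=d/p$, the function $v$ is concave and increasing since $a/(d-2)\le 1$. Jensen's inequality then gives
\[
\mS_r u(e_1)\le v\big(\mS_r(|\cdot|^{2-d})(e_1)\big)=v\big(\min(1,r)^{2-d}\big)\le v(1)=1
\]
for $r\ne 1$. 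This avoids potential theory entirely. The case $r=1$ follows by monotone or Fatou limiting, or is simply covered by the same bound since $u$ is integrable on that sphere when $a<d-1$.

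This Jensen argument is the step I expect to be the only real obstacle, and it also matches the integral formula for $s_{p,d}$ stated before the lemma.
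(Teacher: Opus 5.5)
Your argument is correct and takes a different route from the paper, but one displayed formula needs fixing first. Newton's theorem for spherical means of the fundamental solution reads
\[
\mS_r(|\cdot|^{2-d})(y)=\max(|y|,r)^{2-d}=\min\bigl(|y|^{2-d},r^{2-d}\bigr),\qquad |y|\neq r,
\]
not $\min(|y|,r)^{2-d}$. For $r<1$ the mean of $|x|^{2-d}$ over the sphere around $e_1$ equals $1$ by harmonicity, whereas your expression gives $r^{2-d}>1$. With your formula, the final inequality $v(\min(1,r)^{2-d})\le v(1)$ fails for $r<1$. With the correct formula, Jensen gives $\mS_r u(e_1)\le v(\max(1,r)^{2-d})\le v(1)=1$, as you intended. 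Also, $\mS_r u\le u$ is the \emph{super}-mean-value property, not the sub-mean-value one; the inequality you actually state is the right one.

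Compared with the paper: the paper uses superharmonicity of $|x|^{-d/p}$ only on $\R^d\setminus\{0\}$. That yields $\mS_r(|x|^{-d/p})(e_1)\le 1$ only for $r<1$, where the sphere around $e_1$ stays away from the origin. Radii $r\ge 1$ are handled separately through the one-dimensional integral $g(r)$. Differentiating under the integral sign, $g'(r)<0$ for $r>1$ because $2r-2x>0$, and $g'(1)<0$, so the supremum is attained at some $r_p<1$. The lower bound is the trivial $\varphi_d\le \mS_*\varphi_d$.

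You instead treat all radii at once. You do this either by extending superharmonicity across the singularity at $0$, so the super-mean-value inequality holds on every sphere in $\R^d$, or by the Jensen/Newton argument. The latter reduces everything to the fundamental solution and makes the role of $d/p\le d-2$ transparent as concavity of $v$. Your route is shorter and self-contained. It avoids the derivative computation, including the borderline case $r=1$ where the sphere passes through the singularity, and it avoids the attainment-of-the-maximum step. It also gives more: $\mS_r\varphi_d(x)\le\varphi_d(x)$ for all $x$ and all $r>0$, i.e.\ $\varphi_d$ is a fixed point of $\mS_*$, in line with the paper's subsequent remark on fixed points of $\mB_*$. What the paper's split buys is the observation that the decay in $r$ for $r>1$ holds for every $p$; the hypothesis $p\ge d/(d-2)$ is needed only for radii $r<1$.
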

\proof
In the considered range for $(p,d)$ the function $|x|^{-d/p}$ is superharmonic on $\mathbb{R}^d\setminus \lbrace 0 \rbrace$ beacuse \[\Delta(|x|^{-d/p})=\tfrac{d}{p}(2-d(p-1)/p)|x|^{-d/p-2}\leq 0.\] It follows from the properties of superharmonic functions that $\mathcal{S}_r(\varphi_d)(e_1)\leq \varphi_d(e_1)=1$ for all $r<1$. Define
$$g(r):=\int_{-1}^1(1-2rx+r^2)^{-\tfrac{d}{2p}}(1-x^2)^{\tfrac{d-3}{2}}\dd x$$
so that $\mathcal{S}_r(|x|^{-d/p})(e_1)=\tfrac{\omega_{d-2}}{\omega_{d-1}}\, g(r)$. We are interested in studying $\sup_{r>0}g(r)$. Observe that, since $\lim_{r\rightarrow\infty}g(r)=0$, the maximum is attained. We compute
$$g'(r)=-\frac{d}{2p}\int_{-1}^1(1-2rx+r^2)^{-\tfrac{d}{2p}-1}(2r-2x)(1-x^2)^{\tfrac{d-3}{2}}\dd x~.$$
We observe that $g'(1)<0$, so $r=1$  is not a maximum. Similarly, $g'(r)<0$ for all $r>1$ and therefore the maximum is not attained in this region. 
Then the maximum must be attained at some $r_p$ with $r_p<1$. It follows from the aforementioned properties of superharmonic functions that $g(r_p)\leq \frac{\omega_{d-1}}{\omega_{d-2}}$~. 
Hence, $\mathcal{S}_\ast(|x|^{-d/p})(e_1)\leq 1$ and, in view of the trivial bound $\varphi_d(x)\leq \mathcal{S}_\ast \varphi_d(x)$, the result in the statement follows.
\qed

As a consequence we have that, for $p\geq \frac{d}{d-2}$, $d\geq 3$, $b_{p,d}=1$ as suggested in \cite{DSS}.  This appears to be in line with \cite[Theorem~1]{K01} which asserts that $\mB_\ast$ has a non-constant fixed point if and only if $p\geq \frac{d}{d-2}$, $d\geq 3$. Moreover, \cite[Remark~1]{K01} observes that $f\in L^1_{loc}(\mathbb{R}^d)$ is a fixed point for $\mB_\ast$ if and only if $f$ is superharmonic. 

 Our second example concerns the characteristic function of the unit ball $B$ in $\R^d:$ 
 \[\chi_d(x)=\mathbf{1}_{B}(x),\qquad x\in \R^d.\] It has been observed in \cite[Remark 2.9]{APL} that for fixed $(p,d)$
\begin{equation*}
\frac{\|\mB_{*}\chi_d\|_{L^p(\R^d)}}{\|\chi_d \|_{L^p(\R^d)}}\ge \left(1+\frac{1}{2^{dp}(p-1)}\right)^{1/p} >1~,
\end{equation*}
hence providing the lower bound for $\|\mathcal{B}_\ast\|$ in equation \eqref{eq: mB bel APL}. The next lemma implies that
\[
\lim_{d\to \infty} \frac{\|\mB_{*}\chi_d\|_{L^p(\R^d)}}{\|\chi_d \|_{L^p(\R^d)}}=1.
\]

\begin{lemma}
\label{lem: spher ball}
 For each $p\in(1,\infty)$ we have 
  \begin{equation}
\label{eq: spher ball}
\lim_{d\rightarrow\infty}\frac{\|\mathcal{S}_\ast \chi_d\|_{L^p(\R^d)}}{\|\chi_d\|_{L^p(\R^d)}} = 1~.
\end{equation}
\end{lemma}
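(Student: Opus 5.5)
Since $\mS_\ast\chi_d\ge\chi_d$ pointwise (spherical averages are continuous in the radius and tend to $\chi_d(x)$ as the radius goes to $0$ for a.e.\ $x$), the ratio is at least $1$. It therefore suffices to prove the upper bound
\[
\limsup_{d\to\infty}\frac{\|\mS_\ast\chi_d\|_{L^p}^p}{\|\chi_d\|_{L^p}^p}\le 1,
\qquad\text{where } \|\chi_d\|_p^p=|B|.
\]
We have the trivial bound $\mS_\ast\chi_d\le 1$. For $|x|>1$ we write $x=\rho e_1$ with $\rho>1$. Then
\[
\mS_r\chi_d(x)=\sigma\bigl(\{\omega\in\mathbb{S}^{d-1}:|x-r\omega|\le 1\}\bigr)/\sigma(\mathbb{S}^{d-1}),
\]
which is the normalized measure of a spherical cap $\{\omega_1\ge (\rho^2+r^2-1)/(2\rho r)\}$. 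Optimizing over $r$, the threshold $(\rho^2+r^2-1)/(2r\rho)$ is minimized at $r=\sqrt{\rho^2-1}$, giving the value $\sqrt{1-\rho^{-2}}$. Hence
\[
\mS_\ast\chi_d(\rho e_1)=\frac{\omega_{d-2}}{\omega_{d-1}}\int_{\sqrt{1-\rho^{-2}}}^1(1-s^2)^{\frac{d-3}{2}}\,ds=:c_d(\rho),
\]
which is an explicit radial function. The key step is then to show
\[
\int_{|x|>1}\mS_\ast\chi_d(x)^p\,dx=\omega_{d-1}\int_1^\infty c_d(\rho)^p\rho^{d-1}\,d\rho=o(|B|)=o(\omega_{d-1}/d).
\]

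The main obstacle is this estimate. I would use the standard cap bound
\[
\frac{\omega_{d-2}}{\omega_{d-1}}\int_h^1(1-s^2)^{\frac{d-3}{2}}\,ds\lesssim \min\Bigl\{1,\frac{1}{h\sqrt d}\Bigr\}(1-h^2)^{\frac{d-1}{2}},
\]
where $\omega_{d-2}/\omega_{d-1}\sim\sqrt{d}$ and the integral is bounded by $(1-h^2)^{(d-1)/2}/((d-1)h)$ after inserting the factor $s/h\ge 1$. With $h=\sqrt{1-\rho^{-2}}$ we have $1-h^2=\rho^{-2}$, so
\[
c_d(\rho)\lesssim \min\Bigl\{1,\frac{1}{h\sqrt d}\Bigr\}\rho^{-(d-1)}.
\]
Therefore
\[
\int_1^\infty c_d^p\rho^{d-1}\,d\rho\lesssim\int_1^\infty \min\{1,(h\sqrt d)^{-p}\}\,\rho^{-(p-1)(d-1)}\,d\rho.
\]
Since $p>1$, the factor $\rho^{-(p-1)(d-1)}$ confines the integral to $\rho-1\lesssim 1/((p-1)d)$. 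In this range $h\approx\sqrt{2(\rho-1)}$, so $h\sqrt d\lesssim (p-1)^{-1/2}$ and the minimum is essentially $1$. The integral is thus $O_p(1/d)$, which is only of the same order as $|B|/\omega_{d-1}=1/d$ and not $o$ of it.

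So the crude bound needs refining, and this is the genuine difficulty. The plan is to compute the integral more precisely. Set $\rho=1+u/d$. Then $\rho^{-(p-1)(d-1)}\approx e^{-(p-1)u}$ and $h\sqrt d\approx\sqrt{2u}$. The relevant question is whether $c_d(\rho)\to 0$ in this regime, and I expect it does not: by the central limit theorem for $\sqrt d\,\omega_1$, we get $c_d\approx \rho^{-(d-1)}$ times a Gaussian tail near $\mathbb{P}(Z\ge\sqrt{2u})$, which is genuinely of order one.

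Hence I would instead use Gaussian asymptotics. The normalized cap measure $\mathbb{P}(\omega_1\ge h)$ with $h=\sqrt{1-\rho^{-2}}$ converges, for $\rho=1+u/d$, to $\mathbb{P}(Z\ge\sqrt{2u})$, where $Z$ is standard Gaussian (this uses $\sqrt d\,\omega_1\to Z$ and $h\sqrt d\to\sqrt{2u}$). Then, by dominated convergence with the dominating bound above,
\[
\frac{\int_{|x|>1}(\mS_\ast\chi_d)^p\,dx}{|B|}=d\int_1^\infty c_d^p\rho^{d-1}\,d\rho\;\longrightarrow\;\int_0^\infty \mathbb{P}(Z\ge\sqrt{2u})^p e^{u}\,du.
\]
The catch is that $c_d(\rho)$ itself carries no extra factor $\rho^{-(d-1)}$ beyond the tail probability; one only has $\mathbb{P}(Z\ge\sqrt{2u})\approx e^{-u}/\sqrt{4\pi u}$. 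The limiting integral is therefore
\[
\int_0^\infty e^{u}\,\mathbb{P}(Z\ge\sqrt{2u})^p\,du,
\]
which is a positive constant. If it does not vanish, the limit in the statement would be $1+\text{const}>1$.

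This conflicts with the statement, so I would first double-check the optimization over $r$. In particular I need to confirm whether $\sup_r$ really reaches the threshold $h=\sqrt{1-\rho^{-2}}$: for $\rho$ near $1$ this gives $h\approx\sqrt{2u/d}$ and hence caps of order-one measure. If the computation is correct, the proof reduces to evaluating this limit, so I would focus on verifying that the normalization makes the outer contribution vanish. The decisive step is a careful asymptotic analysis of $d\int_1^\infty c_d(\rho)^p\rho^{d-1}\,d\rho$.
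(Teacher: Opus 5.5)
Your proposal does not prove the lemma, and it cannot be completed into a proof: the computation you carried out is correct, and it shows that the statement is false. Both points you wanted to re-check hold up. For $|x|=\rho>1$ the threshold $(\rho^2+r^2-1)/(2\rho r)$ is minimized at the tangent radius $r=\sqrt{\rho^2-1}$. Equivalently, in the triangle with vertices $0,x,y$ (where $|y|=1$ and $|x-y|=r$), the law of sines gives $\sin\theta=\sin(\angle y)/|x|\le 1/|x|$, with equality exactly when the sides of lengths $1$ and $r$ are perpendicular. Hence $\mathcal{S}_\ast\chi_d(x)=\frac12 I_{|x|^{-2}}\bigl(\frac{d-1}{2},\frac12\bigr)=c_d(|x|)$ for $|x|>1$. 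The normalization $\|\chi_d\|_{L^p}^p=|B|=\omega_{d-1}/d$ is also correct, so nothing is left that could make the exterior contribution vanish.

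The paper's proof goes wrong precisely in the optimization over $r$. It asserts that $\sin\theta$ is largest when the sides of lengths $1$ and $|x|$ are perpendicular, i.e.\ $r=\sqrt{|x|^2+1}$. That gives $\sin^2\theta=(|x|^2+1)^{-1}<|x|^{-2}$, and hence an underestimate of $\mathcal{S}_\ast\chi_d$. The subsequent bound uses $\bigl(1-\frac{1}{|x|^2+1}\bigr)^{-1/2}\le\sqrt2$, and the resulting factor $d^{-p/2}$ is what kills the exterior term. With the correct value, the corresponding factor is $(1-|x|^{-2})^{-1/2}$. This is of size $\sqrt d$ on the shell $|x|-1\approx 1/d$ that carries the mass, so it cancels that gain. (For $p\ge 2$ the bound is not even integrable near $|x|=1$ unless it is capped by $1$, as you do.) What remains is an $O(1)$ exterior term, exactly the order your cap bound produced.

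To turn your analysis into a rigorous counterexample you only need Fatou's lemma. With $\rho=1+u/d$ one has $d(1-\rho^{-2})\to 2u$. Writing $\omega=X/|X|$ with $X\sim\mathcal N(0,I_d)$, $\sqrt d\,\omega_1$ converges in law to a standard Gaussian $Z$, so $c_d(1+u/d)\to\mathbb P(Z\ge\sqrt{2u})$, while $(1+u/d)^{d-1}\to e^u$. Hence
\[
\liminf_{d\to\infty}\frac{\|\mathcal{S}_\ast\chi_d\|_{L^p(\R^d)}^p}{\|\chi_d\|_{L^p(\R^d)}^p}
=1+\liminf_{d\to\infty}\int_0^\infty c_d\bigl(1+\tfrac ud\bigr)^p\bigl(1+\tfrac ud\bigr)^{d-1}\,\dd u
\ge 1+\int_0^\infty e^{u}\,\mathbb P\bigl(Z\ge\sqrt{2u}\bigr)^p\,\dd u>1 .
\]
A more elementary version: $c_d$ is decreasing, so $\mathcal{S}_\ast\chi_d\ge c_d(1+1/d)\to\mathbb P(Z\ge\sqrt2)\approx0.079$ on the shell $1<|x|\le1+1/d$. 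That shell has volume $|B|\bigl((1+1/d)^d-1\bigr)$, which tends to $(e-1)|B|$.

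Your dominating bound $c_d(\rho)^p\rho^{d-1}\lesssim(1+u/d)^{-(p-1)(d-1)}$ is bounded by an integrable function of $u$ independent of $d$ for $d$ large. It therefore upgrades the inequality to an equality: the ratio of norms converges to $\bigl(1+\int_0^\infty e^u\,\mathbb P(Z\ge\sqrt{2u})^p\,\dd u\bigr)^{1/p}>1$.

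The same shell argument applies to $\mathcal{B}_\ast\chi_d$. The ball average at radius $t$ gives weight $1-(1-1/d)^d\to1-e^{-1}$ to spheres of radius $ts$ with $s\in[1-1/d,1]$, and on these the threshold changes only by a factor $1+O(d^{-2})$. So the consequence the paper draws for $\mathcal{B}_\ast$ does not hold either.
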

In the proof, we will use a formula for the surface area of a spherical cap on a sphere of radius $r.$ Namely, denoting by $\theta\in[0,\pi/2]$  the polar angle between the pole and the edge of the cap one has
\begin{equation}
\label{eq: Adr form}
A_{d,r}^{\text{cap}}(\sin(\theta))=\frac{1}{2}r^{d-1}\frac{2\pi^{d/2}}{\Gamma(\tfrac{d}{2})}I_{\sin^2(\theta)}\left(\frac{d-1}{2},\frac{1}{2}\right)~,\end{equation}
where $I_x(a,b)$ is the regularized incomplete beta function,
$$I_x(a,b):=\frac{\Gamma(a+b)}{\Gamma(a)\Gamma(b)}\int_0^x t^{a-1}(1-t)^{b-1}\dd t~,$$
see for example \cite{L11}.
\proof
We start with proving that 
\begin{equation}
\label{eq: S on chid}
\mathcal{S}_\ast \chi_d(x)= \chi_d(x)+\frac{1}{2} I_{(|x|^2+1)^{-1}}\left(\frac{d-1}{2},\frac{1}{2}\right)
\mathbf{1}_{\mathbb{R}^d\setminus B}(x).
\end{equation}
The formula is clear for $x\in B$ thus we may take $x\not \in B.$ Let $|(x+r{\mathbb{S}}^{d-1})\cap B|$ be the area of the spherical cap on the sphere of radius $r$ centered at $x$ which is created by intersecting this sphere with the unit ball. Note that this area is non-zero only for $|x|-1<r<|x|+1$. For such $r$ using \eqref{eq: Adr form} we obtain
\begin{equation}
\label{eq: mSr I}
\mS_r (\chi_d)(x)=\frac{|(x+r{\mathbb{S}}^{d-1})\cap B|}{r^{d-1}\sigma(\mathbb{S}^{d-1})}=\frac{1}{2} I_{\sin^2 \theta}\left(\frac{d-1}{2},\frac{1}{2}\right).
\end{equation}
where $\theta=\theta(r,x)$ is   the polar angle between the pole and the edge of the cap $(x+r{\mathbb{S}}^{d-1})\cap B.$ For $x\not \in B$ we want to maximize $\mS_r (\chi_d)(x)$ under the constraints $|x|-1<r<|x|+1.$ In view of \eqref{eq: mSr I} this boils down to determining the maximal value of $\sin\theta.$  Let $y$ be any point on $(x+r{\mathbb{S}}^{d-1})\cap \mathbb{S}^{d-1}.$ Then, $y,x,0$ form a triangle with side lengths $1,|x|,r$ and the angle $\theta$ between the sides of length $|x|$ and $r.$ Under our constraints $\sin \theta$ is the largest when the sides of length $1$ and $|x|$ are perpendicular in which case $\sin \theta=(|x|^2+1)^{-1/2}.$ This justifies \eqref{eq: S on chid}.

Now, using \eqref{eq: S on chid} we have 
\begin{align}\label{eq:max_on_indicator_fun_bound}
    \frac{\|\mathcal{S}_\ast \chi_d\|_{L^p(\R^d)}^p}{\|\chi_d\|_{L^p(\R^d)}^p}= 1 +\frac{1}{|B|}\int_{|x|\geq 1}\left|\frac{1}{2} I_{(|x|^2+1)^{-1}}\left(\frac{d-1}{2},\frac{1}{2}\right)\right|^p \dd x~.
\end{align}
For $|x|\geq 1$ can estimate the regularized beta function in the last display as
\begin{align*}
    I_{(|x|^2+1)^{-1}}\left(\frac{d-1}{2},\frac{1}{2}\right) & =\frac{\Gamma(\tfrac{d}{2})}{\Gamma(\tfrac{d-1}{2})\sqrt{\pi}} \int_0^{(|x|^2+1)^{-1}}t^{\tfrac{d-3}{2}}(1-t)^{-1/2}\dd t\\
    & \leq \frac{\Gamma(\tfrac{d}{2})}{\Gamma(\tfrac{d-1}{2})\sqrt{\pi}} \frac{1}{\left( 1- \tfrac{1}{|x|^2+1}\right)^{1/2}} \int_0^{(|x|^2+1)^{-1}}t^{\tfrac{d-3}{2}}\dd t \\
    & \leq  \frac{\Gamma(\tfrac{d}{2})\sqrt{2}}{\Gamma(\tfrac{d-1}{2})\sqrt{\pi}} \frac{2}{d-1}(|x|^2+1)^{-\tfrac{d-1}{2}}~.
\end{align*}
Hence, integrating in polar coordinates and using \eqref{eq:max_on_indicator_fun_bound} we get
\begin{align*}
     \frac{\|\mathcal{S}_\ast \chi_d\|_{L^p(\R^d)}^p}{\|\chi_d\|_{L^p(\R^d)}^p} & \leq 1 +\frac{\Gamma(\tfrac{d}{2}+1)}{\pi^{d/2}} \frac{2\pi^{d/2}}{\Gamma(\tfrac{d}{2})} \left( \frac{\Gamma(\tfrac{d}{2})}{\Gamma(\tfrac{d-1}{2})\sqrt{\pi}} \frac{\sqrt{2}}{d-1}\right)^p \int_1^\infty (r^2+1)^{-\tfrac{p}{2}(d-1)} r^{d-1}\dd r\\
     & \leq 1 +\frac{2\Gamma(\tfrac{d}{2}+1) }{\Gamma(\tfrac{d}{2})} \left( \frac{\Gamma(\tfrac{d}{2})}{\Gamma(\tfrac{d-1}{2})\sqrt{\pi}} \frac{\sqrt{2}}{d-1}\right)^p\frac{1}{d(p-1)-p}~.
\end{align*}
Using Stirling's formula \eqref{eq: Stirling} we see that the limit as $d\rightarrow\infty$ of the quantity on the right-hand-side of the last display is equal to one. In view of the trivial lower bound $\|\mathcal{S}_\ast\chi_d\|_{L^p(\mathbb{R}^d)}/\|\chi_d\|_{L^p(\mathbb{R}^d)}\geq 1$ the proof of \eqref{eq: spher ball} is completed.
\qed

Our last example covers the case of the Gaussian input function
\[
\gamma_{d}(x)=\frac{1}{(2\pi )^{d/2}}e^{-|x|^2/2},\qquad x\in \R^d.
\]
\begin{lemma}\label{lem: gaus gaus}
    For each  $p\in(1,\infty)$ we have 
\[
\lim_{d\rightarrow\infty}\frac{\|\mG_\ast \gamma_{d}\|_{L^p(\R^d)}}{\|\gamma_{d}\|_{L^p(\R^d)}}=1.
\] 
\end{lemma}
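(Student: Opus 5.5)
Following the one-dimensional computation for $\gamma$ in Section \ref{sec:lowerBound}, we compute $\mG_\ast\gamma_d$ explicitly. Since $\mG_t\gamma_d=\gamma_d*G_t$ is the centered Gaussian density with covariance $(1+t^2)I_d$, we have
\[
\mG_\ast\gamma_d(x)=\sup_{u>1}\,(2\pi u)^{-d/2}e^{-|x|^2/(2u)}.
\]
The function $h(u)=(2\pi u)^{-d/2}e^{-|x|^2/(2u)}$ satisfies $h'(u)=h(u)\big(-\tfrac{d}{2u}+\tfrac{|x|^2}{2u^2}\big)$. Hence it is maximized at $u=|x|^2/d$ when $|x|^2>d$, and at $u=1$ otherwise. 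Consequently
\[
\mG_\ast\gamma_d(x)=\gamma_d(x)\quad (|x|\le\sqrt d),\qquad \mG_\ast\gamma_d(x)=\Big(\frac{d}{2\pi e|x|^2}\Big)^{d/2}\quad(|x|>\sqrt d).
\]

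Next we compute the ratio. We have $\|\gamma_d\|_p^p=(2\pi)^{-dp/2}(2\pi/p)^{d/2}$. Then
\[
\frac{\|\mG_\ast\gamma_d\|_p^p}{\|\gamma_d\|_p^p}\le 1+\frac{1}{\|\gamma_d\|_p^p}\int_{|x|>\sqrt d}\Big(\frac{d}{2\pi e|x|^2}\Big)^{dp/2}\dd x .
\]
The lower bound $\ge1$ is trivial, since $\mG_\ast\gamma_d\ge\gamma_d$ pointwise. For the upper bound we integrate in polar coordinates:
\[
\int_{|x|>\sqrt d}|x|^{-dp}\dd x=\omega_{d-1}\frac{d^{(d-dp)/2}}{d(p-1)}.
\]
The error term therefore equals
\[
\frac{\omega_{d-1}\,d^{d/2}\,(2\pi e)^{-dp/2}(2\pi)^{dp/2}}{d(p-1)\,(2\pi/p)^{d/2}}
=\frac{\omega_{d-1}d^{d/2}e^{-dp/2}p^{d/2}}{d(p-1)(2\pi)^{d/2}}.
\]

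The main step is the asymptotics of this quantity. By Stirling's formula \eqref{eq: Stirling}, $\omega_{d-1}=2\pi^{d/2}/\Gamma(d/2)\approx 2\pi^{d/2}(2e/d)^{d/2}\sqrt{d/(4\pi)}$. Substituting, the error is of order
\[
\frac{\sqrt d}{d(p-1)}\Big(\frac{2e\pi\, p}{2\pi}\Big)^{d/2}e^{-dp/2}
=\frac{C}{(p-1)\sqrt d}\,\big(p\,e^{1-p}\big)^{d/2}.
\]
Since $\log p<p-1$ for $p>1$, we have $pe^{1-p}<1$, so the error decays exponentially in $d$; even the factor $1/\sqrt d$ alone would suffice. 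Taking $p$-th roots gives the limit $1$. The only delicate point is tracking the constants in Stirling's formula, and the exponential gap $pe^{1-p}<1$ makes this robust.
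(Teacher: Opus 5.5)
Your proof is correct and follows the paper's argument essentially step by step. Both compute $\mG_\ast\gamma_d$ explicitly by maximizing $h(u)$ over $u>1$, split at $|x|=\sqrt d$, and use Stirling's formula to show the tail term vanishes. The only difference is that the paper first uses monotonicity of the error term in $p$ to reduce to $p=1+\varepsilon$, whereas you evaluate it directly and obtain the explicit rate $\frac{C}{(p-1)\sqrt d}\,(pe^{1-p})^{d/2}$, which is slightly cleaner.
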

\noindent Clearly, in view of Theorems \ref{thm_rad_logconc} and \ref{thm:gaussian_sphere}, an immediate consequence of the lemma is that
\[\lim_{d\rightarrow\infty}  \frac{\|\mS_\ast\gamma_d\|_{L^p(\mathbb{R}^d)}}{\|\gamma_d\|_{L^p(\mathbb{R}^d)}}= \lim_{d\rightarrow\infty} \sup_{K\in\LCrad} \frac{\|\mK_\ast\gamma_d\|_{L^p(\mathbb{R}^d)}}{\|\gamma_d\|_{L^p(\mathbb{R}^d)}}=1~.\]

\proof Since $\gamma_d$ is a Gaussian we have 
$$\sup_{t>0} \mG_t (\gamma_d)(x)=\sup_{t>0}\frac{1}{(2\pi(1+t^2))^{d/2}}e^{-|x|^2/(2(1+t^2))}~.$$
We set $u:=1+t^2$ which translates the condition $t>0$ into $u>1$. Note that the function $h(u):= \frac{1}{(2\pi u)^{d/2}}e^{-|x|^2/(2u)}$
satisfies $h'(u)=h(u)(-\frac{d}{2u}+\frac{|x|^2}{2u}),$ $h'(\frac{|x|^2}{d})=0$ and $h''(\frac{|x|^2}{d})\leq 0$. Since $\frac{|x|^2}{d}>1$ for $|x|>\sqrt{d}$ we see that
$$\sup_{u>1} h(u)=\left( \frac{d}{2\pi|x|^2}\right)^{d/2}e^{-d/2} \qquad \text{for}\; |x|>\sqrt{d}~,$$
while for $|x|\leq \sqrt{d}$ the supremum is attained at the boundary $u=1$, namely
$$\sup_{u>1} h(u)=\frac{1}{(2\pi )^{d/2}}e^{-|x|^2/2}=\gamma_d(x) \qquad \text{for}\; |x|\leq \sqrt{d}~.$$
Using these facts, we obtain that
\begin{align*}
    \|\sup_{t>0} \mG_t (\gamma_d)\|_{L^p(\mathbb{R}^d)}^p &= \int_{|x|\leq \sqrt{d}} |\gamma_d(x)|^p\dd x + \left(\frac{d}{2\pi e} \right)^{dp/2}\int_{|x|> \sqrt{d}} |x|^{-dp}\dd x \\
    & =\int_{|x|\leq \sqrt{d}} |\gamma_d(x)|^p\dd x  + \left(\frac{d}{2\pi e} \right)^{dp/2} \frac{2\pi^{d/2}}{\Gamma(\frac{d}{2})} \frac{1}{d^{\frac{d}{2}(p-1)}} \frac{1}{d(p-1)}~.
\end{align*}
We compute $\|\gamma_d\|_{L^p(\mathbb{R})^d}^p=\frac{1}{(2\pi )^{dp/2}}(\frac{2\pi }{p})^{d/2}$. Hence, to conclude, we are left to show that, for any fixed $p\in(1,\infty)$, the following limit is equal to zero,
\begin{align*}
    \lim_{d\rightarrow\infty} \;\; (2\pi )^{dp/2} \left(\frac{p}{2\pi }\right)^{d/2}&\left( \left(\frac{d}{2\pi e} \right)^{dp/2} \frac{2\pi^{d/2}}{\Gamma(\frac{d}{2})} \frac{1}{d^{\frac{d}{2}(p-1)}} \frac{1}{d(p-1)} \right)\\
    & = \lim_{d\rightarrow\infty} \left(\frac{1}{e}\right)^{dp/2}\left( \frac{pd}{2}\right)^{d/2}\frac{2}{\Gamma(\frac{d}{2})}\frac{1}{d(p-1)} =: \lim_{d\rightarrow\infty} v(p,d)~.
\end{align*}
We observe that for any fixed $d\geq 1$, $v(p,d)$ is a decreasing function of  $p\in (1,\infty)$. Therefore, for any fixed $p$ and sufficiently small, fixed, $\varepsilon>0$, using Stirling's approximation \eqref{eq: Stirling} we have 
\begin{align*}
   \lim_{d\rightarrow\infty} v(p,d)\leq  \lim_{d\rightarrow\infty}
   \left(\frac{1}{e}\right)^{(1+\varepsilon)d/2}\left( \frac{(1+\varepsilon)d}{2}\right)^{d/2}\frac{2}{\Gamma(\frac{d}{2})}\frac{1}{d\varepsilon} = 0~,
\end{align*}
as claimed.
\qed

\begin{remark} Let $\Prad$ be the family of radial probability measures on $\R^d$. For $M\in \Prad$ we write
\[
\mM_t f(x)=\int_{\R^d} f(x-ty)\,dM(y),\qquad  \mM_*f(x)=\sup_{t>0}|\mM_t f(x)|.
\]
Clearly, this is in accord with \eqref{eq:mKt} when $M$ has a density, i.e. $dM(y)=K(y)dy$.
In view of the pointwise inequality
\begin{equation*}
\mathcal{M}_\ast f(x)\le \mathcal{S}_\ast f(x) ~,
\end{equation*}
and of the trivial lower bound ${\|\mathcal{M}_\ast f\|_{L^p(\R^d)}}/{\|f\|_{L^p(\R^d)}}\geq 1$, Lemmas \ref{lem: spher ball} and \ref{lem: gaus gaus} imply that
\[ \lim_{d\rightarrow\infty}\sup_{M\in \Prad} \frac{\|\mathcal{M}_\ast \chi_d\|_{L^p(\R^d)}}{\|\chi_d\|_{L^p(\R^d)}} = \lim_{d\rightarrow\infty}\sup_{M\in \Prad} \frac{\|\mathcal{M}_\ast \gamma_d\|_{L^p(\R^d)}}{\|\gamma_d\|_{L^p(\R^d)}}=1~,
   \]
   where now the supremum is over the family of \textit{all} radial probability measures on $\R^d$.
   \end{remark}

\subsection*{Acknowledgments}
The authors are grateful to Tony Carbery for literature references.

\appendix

\section{Auxiliary lemmas for Bessel functions}
\label{sec: auxlem}
The purpose of this appendix is to collect some useful identities and estimates involving Bessel functions that are needed in the proof of Theorem \ref{thm:gaussian_sphere}. First, we recall the integral representation 
\begin{align}\label{integral_repr_bessel}
    J_\nu(r)= \frac{\big(\tfrac{r}{2}\big)^{\nu}}{\Gamma(\nu+\tfrac{1}{2})\sqrt{\pi}} \int_{-1}^1 e^{irs}(1-s^2)^{(2\nu-1)/2}
ds~, \quad \nu>-\frac{1}{2},\, r\geq 0~, \end{align}
see e.g. \cite[Appendix~B]{Gr14}.
Moreover, as a consequence of the recursion formulas 
\begin{align*}
   \frac{2\nu}{r} J_\nu(r) & = J_{\nu-1}(r)+J_{\nu+1}(r) \\\
   2 J'_\nu(r) & = J_{\nu-1}(r) - J_{\nu+1}(r)
\end{align*}
we have the identity
\begin{align}\label{first_prop_bessel}
J_\nu'(r)=J_{\nu-1}(r)-\frac{\nu}{r}J_\nu(r).
\end{align}

\begin{lemma}
Let $r>0$. The following identities hold,
\begin{align}\label{first_identity_rdmudr}
    r\frac{\dd}{\dd r} \mu(r)= 2 \frac{\Gamma(\tfrac{d}{2})}{\pi^{d/2-1}}\bigg(-\frac{d}{2}+1 \bigg) r^{-\tfrac{d}{2}+1}J_{\tfrac{d}{2}-1}(2\pi r) + 2\pi \frac{\Gamma(\tfrac{d}{2})}{\pi^{d/2-1}}r^{-\tfrac{d}{2}+2}J_{\tfrac{d}{2}-2}(2\pi r)~,
\end{align}
\begin{align}\label{second_identity_rdmudr}
     r\frac{\dd}{\dd r} \mu(r)= r (i2\sqrt{\pi}) \frac{\Gamma(\tfrac{d}{2})}{\Gamma(\tfrac{d}{2}-\tfrac{1}{2})} \int_{-1}^1 e^{i2\pi rs}s(1-s^2)^{(d-3)/2}\dd s~.
\end{align}
\end{lemma}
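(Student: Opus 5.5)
We need to prove the two identities for $r\frac{d}{dr}\mu(r)$, where $\mu(r)=\frac{\Gamma(d/2)}{\pi^{d/2-1}} r^{-d/2+1}J_{d/2-1}(2\pi r)$ is regarded as a function of $r=|\xi|$.

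For \eqref{first_identity_rdmudr} we differentiate the product directly. Set $\nu=\tfrac d2-1$ and $c_d=\Gamma(\tfrac d2)\pi^{1-d/2}$, so that $\mu(r)=c_d r^{-\nu}J_\nu(2\pi r)$. The product rule gives
\[
r\mu'(r)=c_d\big(-\nu r^{-\nu}J_\nu(2\pi r)+2\pi r^{1-\nu}J_\nu'(2\pi r)\big).
\]
We then substitute \eqref{first_prop_bessel} at the point $2\pi r$:
\[
J_\nu'(2\pi r)=J_{\nu-1}(2\pi r)-\frac{\nu}{2\pi r}J_\nu(2\pi r).
\]
This yields
\[
r\mu'(r)=c_d\big(-2\nu r^{-\nu}J_\nu(2\pi r)+2\pi r^{1-\nu}J_{\nu-1}(2\pi r)\big).
\]
Since $-2\nu=2(-\tfrac d2+1)$, $r^{-\nu}=r^{-d/2+1}$, $r^{1-\nu}=r^{-d/2+2}$ and $\nu-1=\tfrac d2-2$, this is exactly \eqref{first_identity_rdmudr}. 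For $d\ge3$ we have $\nu-1\ge -\tfrac12$, so the Bessel function of order $\tfrac d2-2$ is well defined. The recursion holds for all real orders, so no restriction arises beyond that.

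For \eqref{second_identity_rdmudr} we use the integral representation \eqref{integral_repr_bessel} with $\nu=\tfrac d2-1$ at the point $2\pi r$. Here $(\pi r)^\nu$ cancels $r^{-\nu}$, and $(2\nu-1)/2=(d-3)/2$. This gives
\[
\mu(r)=\frac{\Gamma(\tfrac d2)}{\pi^{d/2-1}}\cdot\frac{\pi^{\nu}}{\Gamma(\tfrac{d-1}{2})\sqrt\pi}\int_{-1}^1 e^{2\pi i r s}(1-s^2)^{(d-3)/2}\,ds=\frac{\Gamma(\tfrac d2)}{\Gamma(\tfrac{d-1}{2})\sqrt\pi}\int_{-1}^1 e^{2\pi i rs}(1-s^2)^{(d-3)/2}\,ds.
\]
As a sanity check, at $r=0$ this equals $1$ by the Beta integral. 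We then differentiate under the integral sign. This is justified because the integrand is smooth in $r$ and $|s(1-s^2)^{(d-3)/2}|$ is integrable on $[-1,1]$ for $d\ge2$. Each differentiation produces a factor $2\pi i s$. Multiplying by $r$ gives
\[
r\mu'(r)=r\cdot\frac{2\pi i\,\Gamma(\tfrac d2)}{\Gamma(\tfrac{d-1}{2})\sqrt\pi}\int_{-1}^1 e^{2\pi i rs}s(1-s^2)^{(d-3)/2}\,ds,
\]
and $2\pi/\sqrt\pi=2\sqrt\pi$, which is \eqref{second_identity_rdmudr}.

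Neither step is a real obstacle; the main care point is the constant bookkeeping. In particular, we must check that the power of $\pi$ coming from $(2\pi r/2)^\nu$ cancels against $\pi^{1-d/2}$ up to the factor $\sqrt\pi$, and that the sign conventions, namely $e^{irs}$ in \eqref{integral_repr_bessel} versus $e^{-2\pi i\langle x,\xi\rangle}$ in the definition of $\mu$, agree. The latter is harmless because the weight is even in $s$, so $\mu$ is real and $e^{\pm 2\pi i r s}$ give the same $\mu$.
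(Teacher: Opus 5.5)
Your proposal is correct and takes the same route as the paper. The first identity comes from differentiating $c_d r^{-\nu}J_\nu(2\pi r)$ and applying $J_\nu'=J_{\nu-1}-\frac{\nu}{r}J_\nu$; the second comes from the integral representation with $\nu=\tfrac d2-1$, where $(\pi r)^\nu\pi^{1-d/2}r^{-\nu}=1$ and $2\pi/\sqrt\pi=2\sqrt\pi$, followed by differentiation under the integral sign.
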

\proof Identity \eqref{first_identity_rdmudr} can be obtained by direct computation using the definition of $\mu$  together with identity \eqref{first_prop_bessel}. Identity \eqref{second_identity_rdmudr} can be obtained by direct computation using the definition of $\mu$ and the integral representation formula for Bessel functions \eqref{integral_repr_bessel}.
\qed

The following lemma provides a bound for the integral in the right-hand-side of \eqref{second_identity_rdmudr}.
\begin{lemma}
    There exists a constant $c>0$ independent of the dimension $d$ such that for all $d\geq 2$, $r\ge 0$, the following pointwise estimate holds
    \begin{align}\label{estimate_as_in_MSW}
       \bigg| \int_{-1}^1 e^{i2\pi rs}s(1-s^2)^{(d-3)/2}\dd s \bigg| \lesssim \frac{e^{-2\pi r/\sqrt{d}}}{d}+\frac{e^{-d/10}}{\sqrt{d}}~.
    \end{align}
\end{lemma}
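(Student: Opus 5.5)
We need to bound $I(r):=\int_{-1}^1 e^{i2\pi rs}s(1-s^2)^{(d-3)/2}\dd s$. The plan is to split the interval into a central part $|s|\le s_0$ and tails $|s|>s_0$, with $s_0$ a fixed small constant (say $s_0=1/2$). On the central part we compare $(1-s^2)^{(d-3)/2}$ with the Gaussian $e^{-(d-3)s^2/2}$ and compute the Fourier transform explicitly. The tails are handled trivially: there $(1-s^2)^{(d-3)/2}\le (3/4)^{(d-3)/2}\lesssim e^{-d/10}$, and the tail length is $O(1)$. This is consistent with the target bound, since for small $d$ everything is $O(1)$ and the right-hand side is also bounded below by a dimensional constant for $d$ in any fixed finite range.

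For the main term, I would write $(1-s^2)^{(d-3)/2}=e^{(d-3)\log(1-s^2)/2}$. Then $s(1-s^2)^{(d-3)/2}$ is odd, so
\[
I(r)=2i\int_0^1\sin(2\pi rs)\,s(1-s^2)^{(d-3)/2}\dd s .
\]
A cleaner route avoids the Gaussian-comparison error terms. Note that $s(1-s^2)^{(d-3)/2}=-\frac{1}{d-1}\frac{\dd}{\dd s}(1-s^2)^{(d-1)/2}$. Integrating by parts (the boundary terms vanish for $d\ge 2$) gives
\[
I(r)=\frac{2\pi i r}{d-1}\int_{-1}^1 e^{i2\pi rs}(1-s^2)^{(d-1)/2}\dd s .
\]
So the task becomes bounding the even integral $\int_{-1}^1 e^{i2\pi rs}(1-s^2)^{(d-1)/2}\dd s$, with the factor $r/d$ in front. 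Write $r/d=(r/\sqrt d)\cdot d^{-1/2}$. If the remaining integral is $\lesssim d^{-1/2}e^{-c' r^2/d}+e^{-d/10}$, then $(r/\sqrt d)e^{-c'r^2/d}\lesssim e^{-2\pi r/\sqrt d}$ up to constants, which gives the stated shape. Strictly, this needs $e^{-c'x^2}x\lesssim e^{-2\pi x}$, and that holds for large $x$ for any $c'>0$. The factor $r/d$ multiplying the $e^{-d/10}$ term, however, is dangerous for large $r$.

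To control that, I would instead use the estimate from \cite[Lemma~4.1]{MSW24} quoted above, $|\mu(\xi)|\lesssim e^{-2\pi|\xi|/\sqrt d}+e^{-\alpha d}$, which handles precisely the even integral. Through \eqref{integral_repr_bessel}, the even integral is a normalized Bessel function of order $d/2-1/2$, i.e. essentially $\mu$ in dimension $d+1$ times $\int_{-1}^1(1-s^2)^{(d-1)/2}\dd s\simeq d^{-1/2}$. This gives $|I(r)|\lesssim \frac{r}{d}\,d^{-1/2}(e^{-2\pi r/\sqrt{d+1}}+e^{-\alpha d})$.

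\textbf{Main obstacle.} The large-$r$ regime is where this bound breaks down: the factor $r$ can destroy the exponentially small term. There I would instead integrate by parts in the original form, differentiating the amplitude rather than the phase. Each integration by parts gains $1/r$, and the amplitude derivatives are controlled by powers of $d$. Repeating $O(d)$ times, or simply once or twice when $r\ge d$, gives a bound like $d^{k}/r^{k}\cdot d^{-1/2}$. For $r\ge d^2$ this is smaller than $e^{-d/10}$ only if $k\sim d$, so a cruder choice is needed. The better alternative for $r\gtrsim d$ is to use $|J_\nu(x)|\le x^{-1/2}$ from \eqref{eq_bessel_leq_r_power_minus_one_half}, combined with the Gamma-factor decay via Stirling \eqref{eq: Stirling}. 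The normalized Bessel function contributes $\Gamma(d/2)(\pi r)^{-(d-1)/2}r^{-1/2}$, which for $r\ge d$ is super-exponentially small in $d$, and this absorbs the factor $r$. Then, for $\sqrt d\le r\le d$, the factor $r\,e^{-\alpha d}/d^{3/2}\le e^{-\alpha d}$ is harmless. Matching the constant $1/10$ in the statement requires tracking $\alpha$, or else accepting some $c>0$, which is what the phrase ``there exists a constant $c$'' permits.
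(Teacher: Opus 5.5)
Your integration by parts is correct: it gives $I(r)=\frac{2\pi i r}{d-1}\int_{-1}^1 e^{i2\pi rs}(1-s^2)^{(d-1)/2}\,\dd s$. The even integral is a normalized $J_{d/2}$, i.e.\ the spherical multiplier in dimension $d+2$, not $d+1$. Your Bessel--Stirling argument for $r\gtrsim d$ is also fine; it gives decay $d^{-1/2}(2\pi e)^{-d/2}$, which is exponential rather than super-exponential, but that suffices.

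The gap is the main term for $\sqrt d\ll r\lesssim d$. The only decay you obtain for the even integral is inherited from \cite[Lemma~4.1]{MSW24}, namely $d^{-1/2}e^{-2\pi r/\sqrt{d+2}}$, and its rate is the same as the target's. After multiplying by $2\pi r/(d-1)$, your bound for $r\le d$ is comparable to $\frac{r}{\sqrt d}\cdot\frac{e^{-2\pi r/\sqrt d}}{d}$. The factor $x=r/\sqrt d$ cannot be absorbed, since $xe^{-2\pi x}$ is not $O(e^{-2\pi x})$.

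The term $e^{-d/10}/\sqrt d$ does not rescue this. In this range $e^{-2\pi r/\sqrt d}\ge e^{-2\pi\sqrt d}\gg e^{-d/10}$ for large $d$. At $r=d/10$, for instance, your bound exceeds the right-hand side of \eqref{estimate_as_in_MSW} by a factor of order $\sqrt d$. In this range you only check the $e^{-\alpha d}$ piece, never the main piece.

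Your earlier remark that Gaussian decay $d^{-1/2}e^{-c'r^2/d}$ would suffice is correct. However, you never prove it, and it does not follow from \cite{MSW24}; it would need a contour shift of $r$-dependent height. As written, your argument yields only $|I(r)|\lesssim_c \frac{e^{-cr/\sqrt d}}{d}+\frac{e^{-d/10}}{\sqrt d}$ for $c<2\pi$. Even that requires $\alpha\ge 1/10$: the constant $c$ in the statement does not appear in the estimate, and the exponent $1/10$ is explicit. This weaker bound would actually be enough where the lemma is used in Proposition~\ref{propo_estimate_for_mu}, but it is not the stated inequality.

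The missing idea is simply not to integrate by parts, because that is exactly what creates the factor $r$. The paper proceeds as follows:
\begin{itemize}
\item It rescales $s\mapsto s/\sqrt d$ and bounds the region $|s|\ge\sqrt d/2$ trivially by $e^{-d/10}$, times the outer $d^{-1/2}$.
\item It moves the segment $[-\sqrt d/2,\sqrt d/2]$ up by $i$ using Cauchy's theorem.
\item On the top side, the phase yields $e^{-2\pi r/\sqrt d}$ and the odd amplitude factor becomes $(i-s)/\sqrt d$. Moreover, $\int_{|s|\le\sqrt d/2}|i-s|\,|1-(i-s)^2/d|^{(d-3)/2}\,\dd s\lesssim 1$, because the weight decays like a Gaussian in $s$.
\item Together with the outer $d^{-1/2}$, this gives $e^{-2\pi r/\sqrt d}/d$ with no power of $r$. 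The vertical sides contribute $O(e^{-d/10})$.
\end{itemize}
So the extra factor $d^{-1/2}$ comes from the oddness of the amplitude on the shifted contour, not from a derivative falling on the phase.
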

\proof The statement is easily seen to be true for small values of $d$ hence we consider only sufficiently large values of $d\in\mathbb{N}$. We follow with very minor modifications the arguments in the proof of \cite[Lemma~4.1]{MSW24}, see also \cite[Lemma~3.6]{KW23}. We start by estimating
\begin{align*}
     &\bigg| \int_{-1}^1  e^{i2\pi rs}s(1-s^2)^{(d-3)/2}\dd s \bigg| = \frac{1}{\sqrt{d}}\bigg| \int_{-\sqrt{d}}^{\sqrt{d}} e^{i2\pi rs/ \sqrt{d}}\frac{s}{\sqrt{d}}(1-\frac{s^2}{d})^{(d-3)/2}\dd s \bigg|\\
          &\leq \frac{1}{\sqrt{d}}\bigg| \int_{\tfrac{\sqrt{d}}{2}\leq |s|\leq \sqrt{d}} e^{i2\pi rs/\sqrt{d}}\frac{s}{\sqrt{d}}\bigg(1-\frac{s^2}{d}\bigg)^{(d-3)/2}\dd s \bigg| + \frac{1}{\sqrt{d}}\bigg| \int_{|s|\leq \tfrac{\sqrt{d}}{2}} e^{i2\pi rs/ \sqrt{d}}\frac{s}{\sqrt{d}}\bigg(1-\frac{s^2}{d}\bigg)^{(d-3)/2}\dd s \bigg|~.
\end{align*}
Using the fact that $1-\frac{s^2}{d}\leq \frac{3}{4}$ for $|s|\geq \frac{\sqrt{d}}{2}$ we bound the first integral in the last display by
\begin{align*}
   \bigg| \int_{\tfrac{\sqrt{d}}{2}\leq |s|\leq \sqrt{d}} e^{i2\pi rs/\sqrt{d}}\frac{s}{\sqrt{d}}\bigg(1-\frac{s^2}{d}\bigg)^{(d-3)/2}\dd s \bigg|\leq \sqrt{d}\bigg( \frac{3}{4}\bigg)^{\tfrac{d-3}{2}}\lesssim e^{-d/10}~.
\end{align*}
To estimate the second integral
     \begin{align*}
      \bigg| \int_{|s|\leq \tfrac{\sqrt{d}}{2}} e^{i2\pi rs/ \sqrt{d}}\frac{s}{\sqrt{d}}\bigg(1-\frac{s^2}{d}\bigg)^{(d-3)/2}\dd s \bigg|~
      \end{align*}
we change the contour of integration. Let $\gamma:=\gamma_0\cup\gamma_1\cup\gamma_2\cup\gamma_3$ where
\begin{align*}
    &\gamma_0(s):=s \qquad &\text{for} \; s\in[-\tfrac{\sqrt{d}}{2},\tfrac{\sqrt{d}}{2}]~,\\
    &\gamma_1(s):=is+\frac{\sqrt{d}}{2} \qquad &\text{for} \; s\in[0,1]~,\\
    &\gamma_2(s):=-s+i \qquad &\text{for} \; s\in[-\tfrac{\sqrt{d}}{2},\tfrac{\sqrt{d}}{2}]~,\\
     &\gamma_3(s):=i(1-s)-\frac{\sqrt{d}}{2} \qquad &\text{for} \; s\in[0,1]~.
\end{align*}
As $z\mapsto e^{i2\pi rz/ \sqrt{d}}\tfrac{z}{\sqrt{d}}(1-\tfrac{z^2}{d})^{(d-3)/2}$ is holomorphic in $\lbrace z\in\mathbb{C}:\, |z|<\sqrt{d}\rbrace$, by Cauchy integral theorem we have
\begin{align*}
     \bigg| \int_{|s|\leq \tfrac{\sqrt{d}}{2}} e^{i2\pi rs/ \sqrt{d}}\frac{s}{\sqrt{d}}\bigg(1-\frac{s^2}{d}\bigg)^{(d-3)/2}\dd s \bigg| \leq & \sum_{k\in\lbrace 1,3 \rbrace} \bigg| \int_0^1 e^{i2\pi r\gamma_k(s)/ \sqrt{d}}\frac{\gamma_k(s)}{\sqrt{d}}\bigg(1-\frac{\gamma_k(s)^2}{d}\bigg)^{(d-3)/2}\dd s \bigg|\\
    & + \bigg| \int_{|s|\leq \tfrac{\sqrt{d}}{2}} e^{i2\pi r(-s+i)/ \sqrt{d}}\frac{(i-s)}{\sqrt{d}}\bigg(1-\frac{(i-s)^2}{d}\bigg)^{(d-3)/2}\dd s \bigg|~.  
\end{align*}
We bound the first term in the right-hand-side of the last display as
\begin{align*}
    \sum_{k\in\lbrace 1,3 \rbrace} \bigg| \int_0^1 e^{i2\pi r\gamma_k(s)/ \sqrt{d}}\frac{\gamma_k(s)}{\sqrt{d}}\bigg(1-\frac{\gamma_k(s)^2}{d}\bigg)^{(d-3)/2}\dd s \bigg|\leq 2 \bigg(\frac{1}{\sqrt{d}}+\frac{1}{2}\bigg)\bigg(\frac{3}{4}+\frac{1}{d}+\frac{1}{\sqrt{d}}\bigg)^{(d-3)/2}\lesssim e^{-d/10}~.
\end{align*}
As $e^{i2\pi r(-s+i)/ \sqrt{d}}=e^{-2\pi r/\sqrt{d}}e^{-i2\pi rs/\sqrt{d}}$, to conclude it is enough to show that
\begin{align}\label{estimate_last_term_after_contour}
     \int_{|s|\leq \tfrac{\sqrt{d}}{2}} |i-s|\bigg|1-\frac{(i-s)^2}{d}\bigg|^{(d-3)/2}\dd s \lesssim 1~.
\end{align}
Note that
\begin{align*}
   \bigg| 1- \frac{(s-i)^2}{d} \bigg|\leq 1+\frac{1-s^2}{d}+\frac{2|s|}{d} \leq \begin{cases}
 1+\frac{6}{d} & \text{for} \; |s|\leq \frac{5}{2}\\
 1-\frac{s^2}{36d} & \text{for} \; \frac{5}{2}\leq |s| \leq \frac{\sqrt{d}}{2}\\
    \end{cases} 
\end{align*}
and
\begin{align*}
 \int_{|s|\leq 1} |s|(1-s^2)^{(d-3)/2}\dd s= \frac{2}{d-1}~.
 \end{align*}
Using these facts, we see that the left-hand-side of \eqref{estimate_last_term_after_contour} is bounded by a universal constant times
 \begin{align*}
      & \int_{|s|\leq \tfrac{5}{2}} \bigg( 1+\frac{6}{d} \bigg)^{(d-3)/2}\dd s +  \int_{\tfrac{5}{2}<|s|\leq \tfrac{\sqrt{d}}{2}} |s|\bigg( 1-\frac{s^2}{36d} \bigg)^{(d-3)/2}\dd s  
      \lesssim  1+  \int_{|s|\leq \tfrac{\sqrt{d}}{2}} |s| \bigg(1-\frac{s^2}{36d}\bigg)^{(d-3)/2}\dd s \\
        \lesssim & 1+ d\int_{|s|\leq 1} |s|(1-s^2)^{(d-3)/2}\dd s \lesssim 1+   \frac{d}{d-1} 
        \lesssim  1~,
 \end{align*}
 hence concluding the proof.
 \qed

\end{document}